\newcommand{\R}{\mathbb{R}}
\newcommand{\C}{\mathbb{C}}
\newcommand{\Z}{\mathbb{Z}}
\newcommand{\cF}{\mathcal{F}}
\newcommand{\cW}{\mathcal{W}}
\newcommand{\bbA}{\mathbb{A}}
\newcommand{\bbT}{\mathbb{T}}
\newcommand{\ihom}{\underline{\hom}}
\newcommand{\dg}{{\mathrm{dg}} }
\newcommand{\bI}{\mathbf{I}}
\newcommand{\bC}{\mathbf{C}}
\newcommand{\bE}{\mathbf{E}}
\newcommand{\bM}{\mathbf{M}}
\newcommand{\bD}{\mathbf{D}}
\renewcommand{\Re}{\operatorname{Re}}
\renewcommand{\Im}{\operatorname{Im}}
\newcommand{\absHodge}{\mathrm{abs.~Hodge}}
\newcommand{\Hodge}{\mathbf{H}}
\DeclareMathOperator{\Gr}{Gr}
\DeclareMathOperator{\id}{id}
\DeclareMathOperator*{\colim}{colim}
\DeclareMathOperator{\Ind}{Ind}
\DeclareMathOperator{\reg}{reg}
\DeclareMathOperator{\map}{map}
\DeclareMathOperator{\Map}{Map}
 \newcommand{\Aut}{\mathrm{Aut}}
\newcommand{\xto}{\xrightarrow}
\newcommand{\Fun}{{\mathbf{Fun}}}
\DeclareMathOperator{\Sm}{\mathbf{Sm}}
\newcommand{\spa}{\mathbf{Sp}_{S^{1}}^{mot}}
\newcommand{\Nerve}{{\tt N}}
\renewcommand{\P}{{\mathbb{P}}}
\newcommand{\Ab}{{\mathbf{Ab}}}
\newcommand{\nat}{{\mathbb{N}}}
\DeclareMathOperator{\Hom}{Hom}
\renewcommand{\lim}{{\tt lim}}
\newcommand{\Sp}{\mathbf{Sp}}	
\newcommand{\Mf}{\mathbf{Mf}}
\newcommand{\Ch}{{\mathbf{Ch}}}
\newcommand{\sSet}{{\mathbf{sSet}}}
\newcommand{\Mod}{{\mathbf{Mod}}}
\newcommand{\MotSp}{\mathbf{Sp}^{\mathbb{P}^1}}
\newcommand{\Spc}{\mathbf{Spc}}
\newcommand{\MotSpc}{\mathbf{Spc}^{{mot}}}
\newcommand{\Cat}{\mathbf{Cat}}
\theoremstyle{plain}
\newtheorem{thm}{Theorem}[section]
\newtheorem{corollary}[thm]{Corollary}
\newtheorem{proposition}[thm]{Proposition}
\newtheorem{lemma}[thm]{Lemma}
\theoremstyle{definition}
\newtheorem{definition}[thm]{Definition}
\newtheorem{construction}[thm]{Construction}
\theoremstyle{remark}
\newtheorem{remark}[thm]{Remark}
\newtheorem{example}[thm]{Example}
\newcommand{\Vect}{\mathcal{V}ect}
\newcommand{\Spec}{\mathrm{Spec}}
\newcommand{\MHC}{\mathbf{MHC}}
\newcommand{\MHS}{\mathbf{MHS}}
\newcommand{\MHK}{\mathbf{MHK}}
\newcommand{\N}{\mathrm{N}}
\newcommand{\A}{\mathbb{A}}
\newcommand{\SH}{\mathbf{SH}}
\newcommand{\bH}{\mathbf{H}}
\newcommand{\bA}{\mathbf{A}}
\newcommand{\bK}{\mathbf{K}}
\newcommand{\Rig}{\mathbf{Rig}}
\newcommand{\Ring}{\mathbf{Ring}}
\newcommand{\CommGrp}{\mathbf{CGrp}}
\newcommand{\G}{\mathbb{G}}
\newcommand{\CAlg}{\mathbf{CAlg}}
\newcommand{\wA}{\widetilde{A}}
\newcommand{\iso}{\xrightarrow{~\sim~}}
\newcommand{\PR}{\mathbf{Pr}}
\DeclareMathOperator{\Cone}{Cone}
\DeclareMathOperator{\coker}{coker}
\DeclareMathOperator{\Ho}{Ho}
\newcommand{\IDR}{\mathbf{IDR}}
\newcommand{\calK}{\mathcal{K}}
\newcommand{\Symm}{\mathbf{Symm}}
\title{The Beilinson regulator is a map of ring spectra}
\author{%
Ulrich Bunke\thanks{Fakult\"at f\"ur Mathematik, Universit\"at Regensburg, 93040 Regensburg, Germany},
Thomas Nikolaus\thanks{FB Mathematik und Informatik, Universit\"at M\"unster, Einsteinstr. 62, 48149 M\"unster, Germany},
Georg Tamme\thanks{Fakult\"at f\"ur Mathematik, Universit\"at Regensburg, 93040 Regensburg, Germany}
\thanks{The authors are supported by the SFB/CRC 1085 \emph{Higher Invariants} (Universit\"at Regensburg) funded by the DFG.}
}
\begin{document}

\maketitle

\begin{abstract}
We prove that the Beilinson regulator, which is a map from $K$-theory to absolute Hodge cohomology of a smooth variety, admits a refinement to a map of $E_\infty$-ring spectra in the sense of algebraic topology. 
To this end we exhibit absolute Hodge cohomology as 
the cohomology of a commutative differential graded algebra over $\R$. The associated spectrum to this CDGA is the target of the refinement of the regulator and the usual $K$-theory spectrum is the source. 
To prove this result we compute the space of maps from the motivic $K$-theory spectrum to the motivic spectrum that represents absolute Hodge cohomology using the motivic Snaith theorem. We identify those maps which admit an $E_\infty$-refinement and   prove a uniqueness result for these refinements.
\end{abstract}

\section{Introduction}

Let $X$ be a smooth algebraic variety over {$\C$}.  
On the one hand,  we can form its algebraic $K$-groups $K_{*}(X)$,
which encode  information about the symmetric monoidal category of vector bundles on $X$ with respect to the direct sum. 
On the other hand, we have the Betti cohomology groups $H^{*}(X(\C), \R)$, which carry a natural mixed $\R$-Hodge structure by  \cite{Deligne-HodgeII}. In \cite{BeilinsonHodge} Beilinson constructs a natural complex in the derived category of mixed $\R$-Hodge structures whose cohomology is the Betti cohomology with its mixed Hodge structure, and he defines absolute cohomology groups $H^{*}_{\absHodge}(X,\R(i))$ of $X$ as Ext-groups of the Tate Hodge structure $\R(-i)$ and this complex. The absolute Hodge cohomology groups are the target of the
Beilinson regulator, a natural homomorphism of graded groups
\begin{equation}\label{reg}
\reg:  K_{{*}}(X) \to \bigoplus_{i \in \mathbb{N}}  H^{{2i}-*}_{\absHodge}(X,\R(i))\ .
\end{equation}

The tensor product of vector bundles induces a commutative ring structure on $K_{{*}}(X)$, and  the $\cup$-product in the Betti cohomology of $X$ provides a commutative  ring structure on $ \bigoplus_{i \in \mathbb{N}}  H^{*}_{\absHodge}(X,\R(i))$. 
It is known that  the regulator is a homomorphism of rings  {\cite[2.35]{GilletRR}}.

Our main motivation for the present paper is the application of the regulator to a multiplicative version of differential algebraic $K$-theory as discussed in the series of papers \cite{bg, buta, buta2}. For this, one needs a more refined version of the regulator map: 
The algebraic $K$-groups $K_{*}(X)$  are defined as  homotopy groups of an algebraic $K$-theory  spectrum $\calK(X)$ and  the multiplication on the $K$-groups is induced by an $E_\infty$-ring structure
  on this spectrum.
Absolute Hodge cohomology on the other hand is defined as Ext-groups in the derived category of mixed Hodge complexes. We realize the absolute Hodge cohomology groups as the cohomology groups of a  specific  chain complex 
$\IDR(X)$ {(Definition \ref{dqqlkwdqwdwqdwqd})} consisting of differential forms. The usual wedge product of forms gives a multiplication on the chain level, i.e.~it makes $\IDR(X)$ into a commutative differential graded algebra. 
Under the Eilenberg-MacLane equivalence $H$ this commutative differential graded algebra induces   an $E_\infty$-ring spectrum $H(\IDR(X))$
whose homotopy groups are the cohomology groups of $\IDR(X)$ and therefore the absolute Hodge cohomology groups of $X$. 
For the application we have in mind, it is an important question 
 whether the regulator \eqref{reg} can 
be refined to a spectrum map that is compatible with the $E_\infty$-ring structures on this level.

\begin{thm}\label{firstthm}
The regulator admits a refinement to a map of $E_\infty$-ring spectra 
$$
\calK(X) \to H(\IDR(X)) 
$$
which is natural in the variety $X$.
\end{thm}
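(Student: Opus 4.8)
The plan is to produce the refined regulator first at the level of the motivic stable homotopy category $\SH(\C)$ and then evaluate at $X$. Algebraic $K$-theory of smooth $\C$-varieties is represented by a motivic $E_\infty$-ring spectrum $\mathrm{KGL}$: one has $\calK(X)\simeq \map_{\SH(\C)}(\Sigma^\infty_+X,\mathrm{KGL})$ as $E_\infty$-ring spectra, naturally in smooth $X$ (using that algebraic and homotopy $K$-theory agree on smooth varieties, multiplicatively). So it suffices to (i) build a motivic $E_\infty$-ring spectrum $\mathbf{H}\in\CAlg(\SH(\C))$ with $\map_{\SH(\C)}(\Sigma^\infty_+X,\mathbf{H})\simeq H(\IDR(X))$ as $E_\infty$-ring spectra, naturally in $X$; (ii) construct a map $\mathrm{KGL}\to\mathbf{H}$ in $\CAlg(\SH(\C))$; and (iii) check that it induces the Beilinson regulator \eqref{reg} on homotopy groups. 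Applying $\map_{\SH(\C)}(\Sigma^\infty_+X,-)$, which carries $E_\infty$-ring spectra to $E_\infty$-ring spectra because $\Sigma^\infty_+X$ is canonically a cocommutative coalgebra, then yields the theorem.

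\textbf{Step (i): the Hodge spectrum.}
The assignment $X\mapsto \IDR(X)$ is a presheaf of commutative differential graded $\R$-algebras on $\mathbf{Sm}/\C$, hence, via the Eilenberg--MacLane functor, a presheaf of $E_\infty$-ring spectra which is moreover canonically an $H\R$-algebra. As a presheaf of spectra it satisfies Nisnevich descent and $\A^1$-invariance, because absolute Hodge cohomology of smooth varieties is assembled from Betti cohomology and algebraic de Rham cohomology with its Hodge filtration, all of which are $\A^1$-invariant Nisnevich sheaves; and it satisfies the projective bundle formula. Performing the usual $(\P^1,\infty)$-stabilization --- in $\CAlg$ --- therefore produces $\mathbf{H}\in\CAlg(\SH(\C))$, and unwinding the construction gives the required identification of sections.

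\textbf{Steps (ii)--(iii): the motivic Snaith computation.}
Here one invokes the motivic Snaith theorem: there is an equivalence of $E_\infty$-ring spectra $\mathrm{KGL}\simeq\Sigma^\infty_+(\P^\infty)[\beta^{-1}]$, where $\beta$ is the Bott class and $\P^\infty=\mathrm{B}\G_m$ carries the $E_\infty$-structure from the tensor product of line bundles. Hence $\Map_{\CAlg}(\mathrm{KGL},\mathbf{H})$ is the subspace of $\Map_{\CAlg}(\Sigma^\infty_+\P^\infty,\mathbf{H})\simeq\Map_{E_\infty\text{-spaces}}(\P^\infty_\otimes,\Omega^\infty\mathbf{H})$ consisting of maps sending $\beta$ to a unit. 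Because $\mathbf{H}$ is an $E_\infty$-$H\R$-algebra, this mapping space is computable by $\R$-linear (André--Quillen type) obstruction theory, with obstruction and indeterminacy groups built from the absolute Hodge cohomology of the $\P^n$'s; these can be controlled. This simultaneously produces a distinguished $E_\infty$-map refining the first Chern class $c_1$ in absolute Hodge cohomology (its $\beta$-localization exists because $c_1$ is compatible with the Bott periodicity of $\mathbf{H}$) and pins down the homotopy type of the space of such refinements, yielding the uniqueness statement. Finally, unwinding the construction shows that the underlying additive transformation agrees with the known additive Chern character to Deligne/absolute Hodge cohomology, hence induces \eqref{reg} on homotopy groups; naturality in $X$ is automatic since everything is done in $\SH(\C)$.

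\textbf{Main obstacle.}
The crux is the analysis in steps (ii)--(iii): computing $\Map_{\CAlg}(\mathrm{KGL},\mathbf{H})$ and singling out the component containing the regulator. Everything hinges on $\mathbf{H}$ being $\R$-linear, which collapses the $E_\infty$-obstruction theory for maps out of $\Sigma^\infty_+\P^\infty$ and makes the Snaith presentation tractable; the compatibility needed to invert $\beta$ and the uniqueness of the refinement both fall out of this computation. Getting step (i) exactly right --- in particular realizing the $(\P^1,\infty)$-periodicity of the CDGA-valued presheaf $\IDR$ at the level of commutative algebras --- is the other technical point that must be handled carefully.
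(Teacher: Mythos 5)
Your outline (represent both sides by motivic $E_\infty$-rings, use the Snaith presentation of $\mathrm{KGL}$, compute the space of $\CAlg$-maps, normalize by $c_1$) matches the paper's, but the two load-bearing steps are asserted rather than proved, and in both cases the paper's actual mechanism is different in a way that matters. First, the paper does \emph{not} build $\Hodge$ by $(\P^1,\infty)$-stabilizing the presheaf $X\mapsto H(\IDR(X))$ in $\CAlg$; producing a $\P^1$-spectrum object from an $S^1$-spectrum-valued presheaf requires coherent multiplicative delooping data, which is exactly the delicate point you flag but do not resolve. Instead the paper uses the universal property of $\MotSp$ to extend the dualized $A_{\log}$ to a colimit-preserving symmetric monoidal functor $\wA\colon\MotSp\to\Ind(\MHC_\R[W^{-1}])$ and sets $\Hodge:=R(\bbT[\beta^{-1}])$ for the lax monoidal right adjoint $R$, where $\bbT=\Symm(\R(1)[2])$ is already a periodic commutative algebra on the Hodge side. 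This adjunction is not a convenience: it is what makes step (ii) computable.

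Second, and this is the genuine gap, your step (ii) reduces to ``$\R$-linear Andr\'e--Quillen obstruction theory \dots these can be controlled.'' That is precisely the hard part, and as stated it is not clearly correct: the relevant obstruction groups for $E_\infty$-maps out of $\Sigma^\infty_+\P^\infty$ live in (topological) Andr\'e--Quillen cohomology of that $E_\infty$-ring, not simply in the absolute Hodge cohomology of the $\P^n$'s, and $\Sigma^\infty_+\P^\infty$ is emphatically \emph{not} free on $\Sigma^\infty(\P^1,\infty)$ --- that is why the Snaith theorem has content. The paper avoids all obstruction theory: by adjunction, $\Map_{\CAlg(\MotSp)}(\bK,\Hodge)\simeq\Map_{\CAlg}(\wA(\bK),\bbT[\beta^{-1}])$, and after applying $\wA$ the Snaith colimit collapses to $\bbT[\beta^{-1}]$ (Proposition~\ref{equivalence}); since $\bbT$ \emph{is} free on $\iota(\R(1)[2])$ in $\Ind(\MHC_\R[W^{-1}])$ and the mapping complexes $I\Gamma(\R(p),\R(q))$ are computed explicitly to be discrete (Example~\ref{ijqwdiwqjdwqdljqwldkwqdqwdwqdwqd}), the comparison map of mapping spaces becomes the explicit map $\R^\times\to\prod_p\R$, $\lambda\mapsto(\lambda^p)$, whose image visibly contains the regulator. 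To repair your argument you would either have to carry out the motivic $H\R$-linear obstruction theory in detail (identifying the cotangent complex of $\Sigma^\infty_+\P^\infty_\otimes$ and showing the relevant groups vanish or are discrete), or adopt the paper's transport through $\wA\dashv R$. Your step (iii) normalization via the first Chern class is the right idea, but note the paper also needs Feliu's rigidity theorem (agreement on $K_0$ of Grassmannians plus the splitting principle) to conclude that matching $c_1$ forces agreement with Beilinson's regulator in all degrees; ``unwinding the construction'' does not by itself give this.
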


In order to understand the refinement of the regulator and its construction, we employ techniques of motivic homotopy theory or, more precisely, motivic spectra. The assignment $X \mapsto \calK(X)$ is itself represented by such a motivic spectrum 
$\bK$. In detail, we have 
$$
\calK(X) \simeq \map(\Sigma^\infty_{+} X, \bK)
\ , \quad K_{n}(X) \cong \pi_n(\map(\Sigma^\infty_{+} X, \bK))\ .$$ 
The motivic spectrum {$\bK$  is a motivic $E_\infty$-ring spectrum, and this structure induces the $E_{\infty}$-ring spectrum structure on $\calK(X)$ under the above equivalence.}

Our next step towards proving that the Beilinson regulator is a morphism
 of $E_{\infty}$-ring spectra is to show that absolute Hodge cohomology can be represented by a motivic $E_\infty$-ring spectrum  {as well}.
\begin{thm}\label{thm:Hodge-Intro}
{Absolute} Hodge cohomology is representable by a motivic $E_\infty$-ring spectrum $\Hodge$.   Its underlying  {motivic} spectrum decomposes as ${{\bigoplus}}_{i} \Hodge(i)$ and we have 
$$
H^{2i-n}_{\absHodge}(X,\R(i)) \cong \pi_n( \map(\Sigma^\infty_{+} X,\Hodge(i)))
$$
\end{thm}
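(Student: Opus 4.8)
The plan is to realize $\Hodge$ as the motivic $E_\infty$-ring spectrum attached to the presheaf of commutative differential graded $\R$-algebras $X\mapsto\IDR(X)$. First I would promote Definition~\ref{dqqlkwdqwdwqdwqd} to a strict functor $\IDR\colon\Sm_\C^{op}\to\CAlg(\Ch_\R)$: the wedge product of forms supplies the (strictly commutative) multiplication, and to make the Hodge-filtered algebraic de Rham constituent functorial in $X$ one passes through a functorial, compactification-independent model --- e.g.\ the Thom--Whitney construction of Navarro-Aznar --- while the Betti/$C^\infty$-part is functorial on the nose. Composing with the lax symmetric monoidal Eilenberg--MacLane functor $H\colon\Ch_\R\to\Mod_{H\R}\subset\Sp$ produces a presheaf of $E_\infty$-ring spectra $H\IDR\in\CAlg(\Fun(\Sm_\C^{op},\Sp))$ whose value on $X$ is $H(\IDR(X))$, with homotopy groups the absolute Hodge cohomology groups of $X$.

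Next I would localize $H\IDR$ into $\CAlg(\SH(\C))$. It satisfies Nisnevich descent because $\IDR(X)$ is assembled (by a cone construction) from the de Rham and Betti complexes, which are sheaf-theoretic and hence satisfy descent, and it is $\mathbb{A}^1$-invariant because both of these cohomology theories are: the de Rham and Betti cohomology of $\mathbb{A}^1_X$, together with their Hodge filtrations and real structures, agree with those of $X$. Since the Nisnevich-local and $\mathbb{A}^1$-local reflections on $\Fun(\Sm_\C^{op},\Sp)$ are smashing, they are compatible with commutative algebra structures, so $H\IDR$ already determines an object of $\CAlg(\spa)$ in the $S^1$-stable motivic category. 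To upgrade this to a genuine $\mathbb{P}^1$-spectrum I would use that $\IDR$ is graded by the Tate weight, $\IDR(X)=\bigoplus_{i\ge0}\IDR(X)(i)$, with multiplication respecting the grading, together with the canonical Deligne--Beilinson Chern form of $\mathcal O(1)$, which defines a class in the reduced absolute Hodge cohomology of $\mathbb{P}^1$. A direct computation of $\tilde H^*_{\absHodge}(\mathbb{P}^1,\R(i))$ shows that cup product with this class induces, after the appropriate bidegree shift, an equivalence between the weight-$i$ and weight-$(i{+}1)$ parts; hence $H\IDR$ is canonically oriented, the $\mathbb{P}^1$-bonding maps built from the Chern form are equivalences, and $H\IDR$ defines an object $\Hodge\in\CAlg(\SH(\C))$. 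The Tate grading of $\IDR$ then splits $\Hodge$ as a direct sum $\bigoplus_i\Hodge(i)$ of motivic spectra.

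For the last assertion: for smooth $X$ the space $\map_{\SH(\C)}(\Sigma^\infty_+X,\Hodge(i))$ is, by $\mathbb{A}^1$-invariance and Nisnevich descent, the global sections of the sheafification of the weight-$i$ part of $H\IDR$ over $X$, which is $H(\IDR(X)(i))$ up to the shift fixed by the indexing conventions; its $n$-th homotopy group is thus $H^{2i-n}_{\absHodge}(X,\R(i))$, provided one knows that the explicit chain complex $\IDR(X)$ computes Beilinson's absolute Hodge cohomology, i.e.\ the relevant $\Ext$-groups in the derived category of mixed $\R$-Hodge complexes. I expect the main obstacle to lie in two intertwined places: keeping the construction of $\IDR$ strictly (or $E_\infty$-) functorial through the compactification-dependent Hodge-filtered de Rham complex while at the same time pinning down $\mathbb{P}^1$-stability through the Chern-form computation, and carrying out the comparison between the de Rham model $\IDR$ and Beilinson's derived-category definition compatibly with the multiplicative structures --- this is where the actual content of the theorem sits, the passage through the localizations and the extraction of the summands being formal.
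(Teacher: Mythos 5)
Your construction runs in the opposite direction from the paper's. You build a presheaf of $E_\infty$-rings $X\mapsto H(\IDR(X))$ on $\Sm_\C$, verify descent and $\bbA^1$-invariance, and then try to assemble a $\P^1$-spectrum out of the Tate-graded pieces using a Chern form as Bott element --- essentially the Holmstr\"om--Scholbach/D\'eglise--Mazzari strategy. The paper instead never builds $\Hodge$ by hand: it extends $X\mapsto (\iota A_{\log}(X))^\vee$ to a colimit-preserving symmetric monoidal functor $\wA\colon\MotSp\to\Ind(\MHC_\R[W^{-1}])$ via the universal property of $\MotSp$ (the only computation needed is that $\wA(\Sigma^\infty(\P^1,\infty))\simeq\iota(\R(1)[2])$ is invertible), and defines $\Hodge:=R(\bbT[\beta^{-1}])$ where $R$ is the right adjoint and $\bbT$ is the free algebra on $\R(1)[2]$ with $\beta$ inverted. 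Since $R$ is automatically lax symmetric monoidal and preserves colimits, the $E_\infty$-structure and the splitting $\bigoplus_i\Hodge(i)$ come for free, and the mapping-spectrum formula follows by adjunction plus the comparison $\map(1,\bbT\otimes\iota A_{\log}(X))\simeq H(\IDR(X))$. What the paper's route buys is exactly the avoidance of your hardest step; what your route would buy, if completed, is a more hands-on cycle-level model.

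The genuine gap is your sentence ``hence $H\IDR$ is canonically oriented, the $\P^1$-bonding maps \ldots are equivalences, and $H\IDR$ defines an object $\Hodge\in\CAlg(\SH(\C))$,'' together with the closing claim that the passage through the localizations is formal. Knowing that cup product with the Chern class induces isomorphisms on homotopy groups (or even equivalences of the weight-graded pieces in the homotopy category) does not by itself produce an $E_\infty$-algebra in $\MotSp$: one must invert the Bott element $\beta$ coherently and multiplicatively, i.e.\ lift an object of $\CAlg(\spa)$ with an invertible Bott action to $\CAlg(\MotSp)$. This is precisely the delicate point the paper confronts for $K$-theory, where it invokes the construction of Naumann--Spitzweck--{\O}stv{\ae}r together with the algebra-localization machinery of its Appendix C (cyclic invariance of $\beta$, smashing localization $R[\beta^{-1}]$), and which it deliberately sidesteps for $\Hodge$ by working on the Hodge side of the adjunction. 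Your proof needs either that argument spelled out or the paper's adjunction trick; as written, the step is asserted rather than proved. Secondary, smaller points: the Burgos--Wang complex $A_{\log}$ (a colimit over all good compactifications) is already strictly functorial and multiplicative, so the detour through Thom--Whitney is unnecessary; and the comparison of $\IDR(X)$ with Beilinson's $\Ext$-groups in $D^b(\MHS_\R)$, which you correctly flag as required, is itself a substantial piece of the paper (the $I\Gamma$-model for mapping spectra in $\MHC_\R[W^{-1}]$ and the quasi-isomorphism $\cE(T\otimes A_{\log}(X))\to\IDR(X)$) rather than a known input one can cite.
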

This may be seen as a refinement of \cite[Section 3]{Holmstrom-Scholbach} where a ring in the homotopy category is constructed which represents weak absolute Hodge cohomology, a version of absolute Hodge cohomology disregarding the weight filtrations. 
{A related result using different techniques is \cite[Prop.~1.4.10]{Deglise-Mazzari}.}
The motivic spectrum $\Hodge$ is constructed in {Section}~\ref{sakbcsakjcascsacac8789} {in} such a way that we have  equivalences of $E_\infty$-ring spectra 
$$H(\IDR(X)) \simeq \map(\Sigma^\infty_{+} X, \Hodge)\ .$$ 
After this translation in the motivic world, our main result is implied by the following statement which is the key technical fact:
\begin{thm}\label{thm:main-thm-intro}
There is  a  morphism of motivic spectra \begin{equation}\label{weklfjwekljwelkop23iopr2r232r}
 \bK \to \Hodge
 \end{equation}
which induces the Beilinson regulator $\reg$ as in \eqref{reg} {on the represented cohomology theories}. {There is a choice of this morphism which induces a   map of  rings in the homotopy category of motivic spectra. Furthermore, any morphism \eqref{weklfjwekljwelkop23iopr2r232r} which  induces a ring map   in the homotopy category of motivic spectra refines essentially uniquely to  a morphism of $E_{\infty}$-ring spectra.}
\end{thm}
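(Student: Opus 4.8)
\emph{Step 1 (Snaith reduction).} The plan is to compute the space $\Map_{\CAlg(\MotSp)}(\bK,\Hodge)$ of maps of motivic $E_\infty$-ring spectra and the set of homotopy classes of ring maps $\bK\to\Hodge$ in $\Ho(\MotSp)$, using the motivic Snaith theorem $\bK\simeq(\Sigma^\infty_+\P^\infty)[\beta^{-1}]$ (an equivalence of motivic $E_\infty$-ring spectra, with $\beta$ the Bott class and $\P^\infty=B\G_m$). Since inverting $\beta$ is a localization of $E_\infty$-ring spectra, $\Map_{\CAlg}(\bK,\Hodge)$ is the union of those components of $\Map_{\CAlg}(\Sigma^\infty_+\P^\infty,\Hodge)$ on which $\beta$ becomes a unit of $\pi_*\Hodge$, and the homotopy classes of ring maps $\bK\to\Hodge$ are correspondingly those coming from ring maps $\Sigma^\infty_+\P^\infty\to\Hodge$ sending $\beta$ to a unit. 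Using the adjunction between motivic $E_\infty$-monoids and motivic $E_\infty$-ring spectra and the fact that $\P^\infty=B\G_m$ is grouplike (via tensor product of line bundles),
\[
\Map_{\CAlg}(\Sigma^\infty_+\P^\infty,\Hodge)\;\simeq\;\Map_{\CommGrp}(B\G_m,\mathrm{GL}_1\Hodge),
\]
equivalently the infinite loop space of the spectrum of maps from the connective motivic spectrum $\mathbf b$ with $\Omega^\infty\mathbf b\simeq B\G_m$ into the spectrum of units $\mathrm{gl}_1\Hodge$.

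\emph{Step 2 (linearization).} Since $\Hodge$ is the motivic spectrum of a CDGA over $\R$, it is orientable and rational, hence a module over the rational motivic cohomology spectrum; in particular all of its homotopy groups are $\R$-vector spaces, and, because the category of mixed $\R$-Hodge structures has cohomological dimension one (so $\Ext^{\ge 2}_{\MHS}=0$), $\pi_*\Hodge$ is concentrated in non-positive degrees with $\pi_0\Hodge=\R$. For such rational $E_\infty$-ring spectra the logarithm provides an equivalence $\tau_{\ge 1}\,\mathrm{gl}_1\Hodge\simeq\tau_{\ge 1}\Hodge$, while $\pi_0\,\mathrm{gl}_1\Hodge=\R^{\times}$. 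Consequently every mapping space occurring in Step 1 is the infinite loop space of a module spectrum over the rational motivic cohomology spectrum, hence a generalized Eilenberg--MacLane space whose homotopy groups are built from absolute Hodge cohomology of $\P^\infty$, which by the projective bundle formula is the power series $\R$-algebra $\R[[c]]$ on the first Chern class $c$. In particular $\Map_{\CAlg}(\bK,\Hodge)$ is a discrete space: the higher homotopy groups of its components reduce, via Step 1 and the logarithm, to $\pi_{\ge 1}\Hodge$, which vanishes.

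\emph{Step 3 (identification and conclusions).} By Steps 1--2 a homotopy class of ring map $\bK\to\Hodge$ amounts to a continuous ring homomorphism $\bK^0(\P^\infty)=\Z[[t]]\to\Hodge^{0,0}(\P^\infty)=\R[[c]]$, $t=[\mathcal O(1)]-1$, sending the grouplike element $1+t=[\mathcal O(1)]$ to a grouplike element and $\beta$ to a unit; since the formal group law of $\Hodge$ is additive (rationality), the grouplike elements of $\R[[c]]$ are the exponentials $\exp(\kappa c)$, $\kappa\in\R$, and the Bott condition forces $\kappa\ne 0$. Hence the homotopy classes of ring maps $\bK\to\Hodge$ are parametrized by $\kappa\in\R^{\times}$, the Beilinson regulator being the class with $\kappa=1$, i.e.\ $[\mathcal O(1)]\mapsto\exp(c)=\ch(\mathcal O(1))$: this class is induced by an honest morphism of motivic spectra (because $\Hodge$ is a module over the rational motivic cohomology spectrum), and it induces $\reg$ rather than some other transformation because $\reg$ is multiplicative and agrees with the Chern character on line bundles, so it must be the $\kappa=1$ class. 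This proves the first two assertions. For the last, fix a homotopy ring map $f$; the same exponential computation carried out in $\Ho(\MotSp)$ shows that the forgetful map $\Map_{\CAlg}(\bK,\Hodge)\to\Hom^{\mathrm{ring}}_{\Ho(\MotSp)}(\bK,\Hodge)$ is a bijection of sets, and since by Step 2 the source is discrete, the fibre over $f$ --- the space of $E_\infty$-refinements of $f$ --- is contractible, which is the asserted essential uniqueness.

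The step I expect to be the main obstacle is Step 2: establishing in the motivic setting the logarithm equivalence $\tau_{\ge 1}\,\mathrm{gl}_1\Hodge\simeq\tau_{\ge 1}\Hodge$ for the rational orientable ring spectrum $\Hodge$ --- equivalently, that rationally the obstructions separating the multiplicative and the additive infinite loop structures on $\Hodge$ vanish --- together with transporting the Snaith localization ``$\beta$ invertible'' through the identifications of Step 1 while keeping track of which components survive and matching their homotopy with absolute Hodge cohomology. A secondary point that needs care is pinning down, among the $\R^{\times}$-worth of candidates, the homotopy class that is the genuine Beilinson regulator and not merely another natural transformation in this large $\R$-linear family; this is forced by its effect on $K_0$ together with the normalization of the first Chern classes of line bundles.
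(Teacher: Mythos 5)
Your overall architecture runs parallel to the paper's at the level of strategy: both arguments rest on the Snaith presentation $\bK\simeq\Sigma^{\infty}_{+}\P^{\infty}[\beta^{-1}]$, both identify the homotopy ring maps $\bK\to\Hodge$ as an $\R^{\times}$-family indexed by the normalization of the first Chern class, and both single out Beilinson's regulator inside that family by multiplicativity plus its value on line bundles. The execution, however, differs in an essential way. The paper never touches $\mathrm{gl}_{1}$ or a motivic logarithm: it builds a symmetric monoidal left adjoint $\wA\colon\MotSp\to\Ind(\MHC_{\R}[W^{-1}])$ with $\Hodge=R(\bbT[\beta^{-1}])$ for the right adjoint $R$, so that by adjunction every mapping space in sight becomes a mapping space out of $\wA(\bK)\simeq\bbT[\beta^{-1}]$ inside $\Ind(\MHC_{\R}[W^{-1}])$, where it is computed explicitly by the $I\Gamma$-complexes; the reduction from $\CAlg$-maps to plain maps then uses that $\bbT=\Symm(\iota(\R(1)[2]))$ is a \emph{free} commutative algebra, which replaces your $B\G_{m}\leftrightarrow\mathrm{GL}_{1}$ adjunction and your logarithm. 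Your route would require setting up $\mathrm{gl}_{1}$ and the rational logarithm equivalence in the motivic category, which is precisely the technology the paper is organized to avoid. (Your Step 3 also quietly uses a rigidity statement --- that a multiplicative transformation $K_{*}\to\bigoplus_{p}H^{2p-*}_{\absHodge}(-,\R(p))$ is determined by its effect on $K_{0}$ of Grassmannians; the paper imports this from Feliu.)

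The genuine gap is in Step 2. It is not true that $\pi_{*}\Hodge$ is concentrated in non-positive degrees: cohomological dimension one for $\MHS_{\R}$ gives $\Ext^{\geq 2}=0$, but $\Ext^{1}_{\MHS_{\R}}(\R(0),\R(p))\cong\C/(\cF^{0}\C(p)+\R(p))\cong\R$ for every $p\geq 1$ (this is $H^{1}_{\absHodge}(\mathrm{pt},\R(1))=\R$, the target of $\log|\cdot|$), and because of the Tate shift $[2p]$ this class sits in $\pi_{2p-1}$ of $\map(1,\Hodge(p))$ --- a \emph{positive} degree (compare Example \ref{ijqwdiwqjdwqdljqwldkwqdqwdwqdwqd}, where the nonvanishing $\pi_{-1}$ of $\map(1,\iota(\R(p)))$ becomes $\pi_{2p-1}$ after twisting). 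Consequently the mapping spaces you need are not generalized Eilenberg--MacLane spaces concentrated in degree $0$, the asserted discreteness does not follow from the vanishing you invoke, and your final step breaks in two places: contractibility of the fibre of $\Map_{\CAlg(\MotSp)}(\bK,\Hodge)\to\Map_{\MotSp}(\bK,\Hodge)$ over a homotopy ring map $f$ requires not only that the source be discrete and hit the class of $f$ exactly once on $\pi_{0}$, but also control of $\pi_{1}(\Map_{\MotSp}(\bK,\Hodge),f)$, which receives contributions from exactly these $\Ext^{1}(\R(p),\R(p+1))$-classes for all $p$. To repair this you must compare these $\Ext^{1}$-contributions on both sides of the forgetful map (e.g.\ via the freeness of $\bbT$ on $\iota(\R(1)[2])$ and the explicit $I\Gamma$-computation, as in the paper's reduction to $\Map(\iota(\R(1)[2]),\bbT[\beta^{-1}])$), rather than declare the relevant homotopy groups zero.
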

{In this statement   `essentially unique' has to be interpreted in the appropriate higher categorical sense, i.e.~the moduli space of 
all such refinements is contractible.
Note that the uniqueness assertion in the last statement  of Theorem \ref{thm:main-thm-intro} provides a distinguished   refinement, up to contractible choice,   whose existence is asserted in Theorem \ref{firstthm}}. 
 We consider these uniqueness assertions as one of the main results.

\paragraph{Notations and conventions}

The current paper is written in the language of $\infty$-categories. We have collected some facts that we use in the Appendix. Apart from that we follow mostly Lurie in his terminology as in \cite{HTT} and \cite{HA}. 
We write $\CAlg(\bC)$ for the $\infty$-category of  commutative algebra objects in a symmetric monoidal $\infty$-category $\bC$.
In particular, we  write $\CAlg(\Sp)$ for the $\infty$-category of $E_\infty$-ring spectra and we will not use the $E_\infty$-terminology that we have used in the introduction. 

\paragraph{Acknowledgements} We thank Markus Spitzweck and David Gepner for several valuable hints and discussions.

\tableofcontents

\section{Mixed Hodge complexes}

In this section we recall the notion of mixed $\R$-Hodge structures and Beilinson's description of the derived category of mixed $\R$-Hodge structures in terms of the more flexible mixed $\R$-Hodge complexes.
We enhance this to an equivalence of stable $\infty$-categories. These $\infty$-categories are symmetric monoidal, and the main result is the construction of an explicit symmetric monoidal dg-model for them.
Basic references for (mixed) Hodge structures and complexes are \cite{Deligne-HodgeII,BeilinsonHodge}, or the book \cite{Peters-Steenbrink}.

In the following, all filtrations are assumed to be separated, exhaustive, and of finite length.

\begin{definition}[{see \cite[Prop.~2.1.9]{Deligne-HodgeII}}]
A \emph{pure $\R$-Hodge structure of weight} $n\in\Z$ is a pair $(H, \cF)$ consisting of a finite
dimensional $\R$-vector space $H$ and a decreasing filtration $\cF$ on
$H_\C:= H\otimes_{\R} \C$ satisfying
\[
H_\C \cong \cF^pH_\C \oplus \overline{\cF^{n-p+1}H_\C}
\]
for all $p\in \Z$, where $(\overline{-})$ denotes complex conjugation.
\end{definition}

\begin{definition}[{see \cite[Def.~2.3.1]{Deligne-HodgeII}}]
	\label{defmix}
A \emph{mixed $\R$-Hodge structure} is a triple $(H, \cW, \cF)$ consisting of a finite dimensional
$\R$-vector space $H$, an increasing filtration $\cW$ on $H$, and a
decreasing filtration $\cF$ on $H_\C$ such that for each $n\in \Z$ the
pair
\[
\left( \Gr_n^{\cW}H, \Gr_n^{\cW_{{\C}}}\cF\right)
\]
is a pure $\R$-Hodge structure of weight $n$.
\end{definition}
\begin{example}\label{kldjldqwdqwdqwd} We define the Tate $\R$-Hodge structure $\R(1)$ as follows. The underlying real vector space is $\R$.
Its weight filtration is given by $$0=\cW_{-3}\subset \cW_{-2}=\R\ .$$ Finally, the Hodge filtration
is given by $$0=\cF^{0}\subset \  \cF^{-1}  \cong \C\ .$$ For every integer $n\in \Z$ we set $\R(n):=\R(1)^{\otimes n}$. 
Note that  the $\R$-Hodge structure $\R(n)$ is pure of weight $-2n$.
In order to be compatible with classical definitions,  
we identify the complexification $\R(1)_{\C}$ with $\C$ by $x\otimes 1\mapsto ix$. 
{For example, this} is  used in order to derive \eqref{wqdqdqwjdhdkjhjkqwdqwdqwdwqdqwd} below.
\end{example}
A morphism of mixed $\R$-Hodge structures $f\colon (H, \cW, \cF) \to (H', \cW', \cF')$ is an $\R$-linear
map $f\colon H\to H'$ compatible with the filtration $\cW$ such that $f \otimes \id_\C\colon H_\C
\to H'_\C$ is compatible with $\cF$.
We denote the category of mixed $\R$-Hodge structures by $\MHS_\R$.
With the natural definition of the tensor product of mixed Hodge structures, $\MHS_\R$ is an
abelian tensor category in which every object has a dual, i.e.~$\MHS_{\R}$ is rigid. Moreover, it is enriched over $\R$-vector spaces. See for example \cite[Ex.~3.2, Cor.~3.9]{Peters-Steenbrink} for details.

By a filtered complex we mean a complex in the category of filtered abelian groups.

\begin{definition}\label{defmhc}
A \emph{mixed $\R$-Hodge complex} is a triple $(C, \cW, \cF)$
where $(C, \cW)$ is an increasingly filtered, bounded complex of $\R$-vector spaces ($\cW$ is
called the \emph{weight filtration}), and $\cF$ (the \emph{Hodge filtration}) turns  $C_\C:=C\otimes_{\R}\C$ into a
decreasingly filtered  complex such that
\begin{enumerate}
\item $H^k(C)$ is a finite dimensional $\R$-vector space for each $k\in \Z$,
\item for every $n\in \Z$ the differential of the filtered complex
\[
\left(\Gr^{\cW_\C}_n C_\C,  \Gr^{\cW_\C}_n\cF\right)
\]
is strict, i.e. the corresponding spectral sequence degenerates at $E_1$,
\item for every $k,n\in \Z$ the $\R$-vector space $H^k(\Gr_n^\cW C)$ equipped with the
filtration on 
\[
H^k(\Gr_n^\cW C) \otimes \C \cong H^k(\Gr_n^{\cW_\C}C_\C)
\]
induced by $\cF$ is a pure $\R$-Hodge structure of weight $n$.
\end{enumerate}

Morphisms of mixed $\R$-Hodge complexes are morphisms of complexes which are compatible with the
filtrations. We denote the category of mixed $\R$-Hodge complexes by $\MHC_\R$.
The tensor product of complexes and filtrations induces a tensor structure on $\MHC_\R$. If $M$ is a mixed Hodge complex, then we denote the underlying complex of real vector spaces by $M_\R$,
the weight filtration by $\cW$, and the Hodge filtration by $\cF$.
\end{definition}
\begin{remark}
{This notion of a mixed Hodge complex differs from Deligne's one \cite[8.1.5]{Deligne-HodgeIII}, who requires  $H^k(\Gr_n^\cW C)$ to be pure of weight $k+n$. But our notion 
 agrees with that of Beilinson \cite[3.2]{BeilinsonHodge}. 
Let $\cW$ be an increasing filtration on a complex $C$. The d\'ecalage of $\cW$ is the  filtration $\widehat{\cW}$ defined by
\begin{equation}\label{eq:decalage}
\widehat{\cW}_{k}C^{n} := \{ x\in \cW_{k-n}C^{k}\,|\, dx\in\cW_{k-n-1}C^{n+1}\}.
\end{equation}
A mixed $\R$-Hodge complex in the sense of Deligne gives one in our sense by replacing the weight filtration by its d\'ecalage.}
\end{remark}

We denote by $\Ch^b(\MHS_\R)$ the category of bounded complexes of mixed $\R$-Hodge structures. We
have a natural inclusion
\begin{equation}\label{laber1}
\Ch^b(\MHS_\R) \hookrightarrow \MHC_\R
\end{equation}
as a full tensor subcategory.  On the domain and target of the map \eqref{laber1} we have notions of quasi-isomorphisms. These are compatible with the map \eqref{laber1} and the respective tensor products. 
On each side, we denote the collection of quasi-isomorphisms by $W$. {Then} \eqref{laber1} induces a map between symmetric monoidal $\infty$-categories 
\begin{equation}\label{ddghjkqwdhkjqwdqwd}
\Ch^b(\MHS_\R)[W^{-1}] \to \MHC_{\R}[W^{-1}]\ 
\end{equation}
(see Appendix \ref{appendix_Kram} for the notation $(-)[W^{-1}]$). The following result is standard, it is for example stated in \cite{Drew}. But for completeness we include the sketch of a proof.
\begin{lemma}\label{lem:stable}
Both $\infty$-categories in \eqref{ddghjkqwdhkjqwdqwd} are stable and the functor is exact.
\end{lemma}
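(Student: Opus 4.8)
The plan is to handle the two $\infty$-categories separately, and to reduce both claims to standard facts about localizations of complexes. For the right-hand side, I would first observe that $\MHC_\R$ is an exact category (in fact it is the category of complexes in an exact category of filtered objects, cut out by the pointwise finiteness and strictness conditions, which are stable under the relevant operations), so that $\MHC_\R[W^{-1}]$ is the $\infty$-categorical localization of the category of bounded complexes. The key point is that this localization admits a description as the dg-nerve of a pretriangulated dg-category, or equivalently that one can build the fibrant-cofibrant replacement/homotopy category structure by hand: mixed $\R$-Hodge complexes admit mapping cones and shifts (the weight and Hodge filtrations of a cone being the evident ones, and conditions (1)--(3) of Definition \ref{defmhc} being preserved because passing to $\Gr^{\cW}$ and $\Gr^{\cW_\C}$ commutes with cones and the strictness/purity conditions are closed under extensions). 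Hence $\MHC_\R$ is an exact category with a compatible structure of complices (a ``category of complexes'' in the sense that it is closed under $[1]$ and $\Cone$), and for such data the localization at quasi-isomorphisms is automatically stable, with fiber sequences given by the images of the degreewise-split short exact sequences; exactness of a functor is then the statement that it preserves $[1]$ and $\Cone$, which the inclusion \eqref{laber1} visibly does.

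Second, for the left-hand side: $\MHS_\R$ is an abelian category (even $\R$-linear), so $\Ch^b(\MHS_\R)[W^{-1}]$ is the bounded derived $\infty$-category $\mathcal{D}^b(\MHS_\R)$, which is stable by the standard argument (it is the dg-nerve of the pretriangulated dg-category of bounded complexes with the usual mapping complexes, or one invokes \cite{HA} to identify $\Ch^b(A)[W^{-1}]$ with the underlying $\infty$-category of a stable model structure for any abelian $A$). Then the functor \eqref{ddghjkqwdhkjqwdqwd} is induced by the inclusion of complexes \eqref{laber1}, which commutes strictly with shift and cone on the level of complexes, hence preserves the (co)fiber sequences that generate the stable structure; by the usual criterion a functor between stable $\infty$-categories that preserves zero objects and pushout-squares-along-zero (equivalently, cofiber sequences) is exact, so the functor is exact.

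The main obstacle I expect is purely bookkeeping: checking that conditions (1)--(3) of Definition \ref{defmhc} are genuinely stable under forming $\Cone$ and $[1]$, i.e.\ that $\MHC_\R$ really is closed under these operations inside all filtered complexes. Conditions (1) and (2) are easy (finite-dimensionality of cohomology and $E_1$-degeneration are closed under cones by the long exact sequence and the strictness of the maps involved), but condition (3) --- that each $H^k(\Gr^{\cW}_n C)$ is a pure Hodge structure of weight $n$ --- requires knowing that the category of pure $\R$-Hodge structures of a fixed weight $n$ is abelian (semisimple, in fact) and closed under extensions within filtered vector spaces, which is classical; one must also check that the identification $H^k(\Gr^{\cW}_n C)\otimes\C \cong H^k(\Gr^{\cW_\C}_n C_\C)$ used in (3) is compatible with cones, which again follows from degreewise exactness of $\Gr^{\cW}$ and of $(-)\otimes_\R\C$. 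Once these closure properties are in hand, stability and exactness are formal, which is why the result is ``standard''; I would present the argument at roughly this level of detail and refer to \cite{Drew} and the Appendix for the localization formalism $(-)[W^{-1}]$.
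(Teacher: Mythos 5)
Your overall strategy is the paper's, but there is a genuine gap at the decisive step for the right-hand side of \eqref{ddghjkqwdhkjqwdqwd}. You assert that $\MHC_{\R}[W^{-1}]$ ``admits a description as the dg-nerve of a pretriangulated dg-category,'' and that once closure of $\MHC_{\R}$ under $[1]$ and $\Cone$ is checked, ``the localization at quasi-isomorphisms is automatically stable.'' Neither claim holds as stated. The dg-nerve of $\MHC_{\R}$ with its naive hom-complexes computes the localization at \emph{chain homotopy equivalences} only; identifying the localization at quasi-isomorphisms with a dg-nerve is precisely the content of Proposition~\ref{kldnqwkldwqdwqd}, requires the $I\Gamma$-enrichment, and is proved by a nontrivial calculus-of-fractions argument — it is not available at this stage. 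And inverting an arbitrary class of morphisms in a stable $\infty$-category need not yield a stable $\infty$-category, so ``automatically'' hides the actual content. The paper supplies the missing step by factoring the localization: $\MHC_{\R}[W^{-1}] \simeq (\MHC_{\R}[H^{-1}])[W^{-1}]$, where $\MHC_{\R}[H^{-1}]$ (localization at chain homotopy equivalences) is stable because it is the underlying $\infty$-category of the pretriangulated dg-category, and the second localization is recognized as the Verdier quotient by the full stable subcategory of quasi-acyclic complexes — using that a morphism is a quasi-isomorphism if and only if its cone is quasi-acyclic — which is stable by \cite[Proposition 5.4]{GBT}. Your proof needs some version of this second step; without it the argument does not go through.

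On the positive side, your insistence on verifying that $\MHC_{\R}$ is genuinely closed under $[1]$ and $\Cone$ inside all bifiltered complexes (i.e.\ that conditions (1)--(3) of Definition~\ref{defmhc} are preserved, with the purity condition (3) handled via closure of weight-$n$ pure structures under extensions) is exactly what makes the dg-category pretriangulated; the paper compresses this into ``easily seen to be stable,'' so spelling it out is a genuine contribution. Your treatment of the left-hand side (where the abelian-category machinery of \cite{HA} applies directly to $\Ch^b(\MHS_{\R})$) and your argument for exactness of the functor induced by \eqref{laber1} — preservation of zero objects and cofiber sequences — agree with the paper, which disposes of both points in one line each.
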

\begin{proof}
Let us give a proof for the second $\infty$-category; the first works similar (but easier), and the exactness of the functor is obvious from the description of (co)limits. We first consider $\MHC_{\R}[H^{-1}]$ where $H$ is the class of chain-homotopy equivalences. Then it follows from \cite[Prop.~1.3.4.7]{HA} (using the same argument as in 
\cite[Prop.~1.3.4.5]{HA}) that $\MHC_{\R}[H^{-1}]$ is equivalent to the $\infty$-category underlying the dg-category $\MHC_{\R}$. But this dg-category is easily seen to be stable, similar to the proof of \cite[Prop.~1.3.2.10]{HA}. 

Now, as a second step we use that  $\MHC_{\R}[W^{-1}] \simeq (\MHC_\R[H^{-1}])[W^{-1}]$. Thus we only need to show that the localization of the stable $\infty$-category $\MHC_\R[H^{-1}]$ at the quasi-isomorphisms remains stable. But this localization 
is the same as the Verdier quotient (discussed in the $\infty$-categorical setting in \cite[Section 5.1]{GBT}) at the full stable subcategory of complexes which are quasi-isomorphic to zero. To see this, note that a morphism is a quasi-isomorphism 
if and only if its cone is quasi-isomorphic to zero and use \cite[Prop.~5.4]{GBT}. Thus, since the Verdier quotient is stable, this finishes the proof.
\end{proof}

The homotopy category of the left-hand side of \eqref{ddghjkqwdhkjqwdqwd} is the bounded derived category {$D^b(\MHS_\R)$} of mixed $\R$-Hodge structures, and it is a result of Beilinson's {\cite[Thm.~3.4]{BeilinsonHodge}} that the induced map
\[
D^b(\MHS_\R)  \iso\Ho(\MHC_\R[W^{-1}])
\]
is an equivalence of categories.
Since both sides are stable according to  Lemma~\ref{lem:stable}, and since equivalences between stable $\infty$-categories can be detected on the the level of homotopy categories, this implies:
\begin{proposition}
The functor 
\begin{equation*} 
\Ch^b(\MHS_\R)[W^{-1}] \to \MHC_{\R}[W^{-1}]\ 
\end{equation*}
from \eqref{ddghjkqwdhkjqwdqwd} is an equivalence of symmetric monoidal stable $\infty$-categories.
\end{proposition}
This has also been considered by Drew~\cite{Drew}.
%
 Since $\MHC_\R[W^ {-1}]$ is a stable $\infty$-category, the homotopy category  $\Ho(\MHC_\R[W^ {-1}])$ is canonically triangulated. In the following we provide an explicit model for the mapping spectra in $\MHC_\R[W^ {-1}]$.

\begin{construction}
The tensor category $\MHC_\R$ is closed. Thus, for mixed Hodge complexes $M, N$, there is an internal hom mixed $\R$-Hodge complex 
$\ihom(M,N)$. It  can be described explicitly as follows.
The underlying complex of real vector spaces $\ihom(M,N)_\R$ is the usual internal hom-complex $\ihom(M_\R, N_\R)$ between complexes of real vector spaces. The weight
filtration is given by 
\[
f\in \cW_k\ihom(M, N)_{\R} \quad \text{ iff } \quad f(\cW_lM_\R) \subseteq \cW_{l+k}N_\R \quad
\text{ for all } l\in\Z
\]
and the Hodge filtration by
\[
f\in \cF^k\ihom(M, N)_{\C} \quad \text{ iff } \quad f(\cF^lM_\C) \subseteq \cF^{l+k}N_\C \quad
\text{ for all } l\in\Z\ .
\]
Here we use the canonical identification
$\ihom(M_\C, N_\C)\cong \ihom(M, N)_{\C} $.

We define a complex $I\Gamma(M,N)$ as follows.
We let $\Omega(I)$ denote the {commutative differential graded algebra (cdga)}  of smooth real valued forms on the unit interval $I:=[0,1]$. For a pair of mixed $\R$-Hodge complexes  $M,N\in \MHC{_{\R}}$ 
we  define\footnote{%
Given a diagram of inclusions of real CDGA's 
$$\xymatrix{&A\ar[d]\\B\ar[r]&C} \ ,$$ the CDGA
$\{\omega\in \Omega(I)\otimes_{\R} {C}\:|\: \omega_{|0}\in A, \omega_{|1}\in B \}$
is a model for the homotopy pull-back in  CDGA's
$A\times^{h}_{C}B$.
Similarly,  
$I\Gamma(M,N)$ is a model for the homotopy pull-back in the derived category of real chain complexes
$$(   {\cW_{0}} \ihom(M,N)_{\R}) \times^{h}_{\cW_{0}  \ihom(M,N)_\C} (  \cW_{0} \ihom(M,N)_{{\C}}\cap \cF^{0}   \ihom(M,N)_{{\C}})$$
which behaves well with respect to composition. }
\begin{equation}\label{eklejlk23je23e32ee32e23e32e32e}
I\Gamma(M,N) := \left\{ \omega\in \Omega(I)\otimes_\R \cW_0\ihom(M,N)_\C \;|\; \omega|_0\in  \ihom(M,N)_\R, \omega|_1\in \cF^0\ihom(M,N)_\C \right\}.
\end{equation}
Given a third mixed  $\R$-Hodge complex $P$, we define a composition $$I\Gamma(N,P) \otimes I\Gamma(M,N) \to I\Gamma(M,P)$$ using the wedge-product on $\Omega(I)$ and the composition of morphisms $$\ihom(N,P)\otimes \ihom(M,N) \to \ihom(M,P)\ .$$
It is not hard to check that this endows $\MHC_{\R}$ with a dg-structure. We denote the resulting dg-category by $\MHC^{I\Gamma}_{\R}$.
The bifunctor $I\Gamma$ is compatible with the formation of tensor products of mixed Hodge complexes.
In this way, $\MHC^{I\Gamma}_{\R}$ inherits the structure of a symmetric monoidal dg-category.
\end{construction}

For any dg-category $\bC$, Lurie constructs in \cite[1.3.1.6]{HA} an $\infty$-category $N_{\dg}(\bC)$ called the dg-nerve of $\bC$. It is equivalent to the $\infty$-category obtained by first applying the Dold-Kan 
correspondence to the Hom-complexes in $\bC$ (truncated at 0) to obtain a simplicially enriched category and then the homotopy coherent nerve. 
One can describe the mapping spaces equivalently as the infinite loop spaces of the Eilenberg-MacLane spectra associated to the full mapping complexes.

\begin{lemma}\label{lemma:IGammaQis}
Let $f\colon M \to N$ be a quasi-isomorphism of mixed Hodge complexes. Then   $f$ is sent to an equivalence in the dg-nerve $N_{\dg}(\MHC_\R^{I\Gamma})$.
\end{lemma}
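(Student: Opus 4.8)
The plan is to reduce the statement about equivalences in the dg-nerve to a statement about quasi-isomorphisms of the mapping complexes $I\Gamma(-,-)$, and then to prove the latter by a filtration/spectral-sequence argument. First I would recall that a morphism $f\colon M\to N$ in $\MHC_\R^{I\Gamma}$ becomes an equivalence in $N_{\dg}(\MHC_\R^{I\Gamma})$ if and only if it becomes an isomorphism in the homotopy category, i.e.\ if and only if for every mixed Hodge complex $P$ the induced maps of complexes
$$
f_*\colon I\Gamma(P,M)\to I\Gamma(P,N),\qquad f^*\colon I\Gamma(N,P)\to I\Gamma(M,P)
$$
are quasi-isomorphisms; equivalently, that $f$ has an inverse up to chain homotopy through the composition law, which one extracts formally from the $H_0$-level statement once one knows $I\Gamma$ computes the correct derived mapping complex. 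So the real content is: \emph{if $f$ is a quasi-isomorphism of mixed Hodge complexes, then $I\Gamma(P,f)$ and $I\Gamma(f,P)$ are quasi-isomorphisms for all $P$.}

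Next I would analyze the complex $I\Gamma(M,N)$ defined in \eqref{eklejlk23je23e32ee32e23e32e32e}. The point is that it fits into a homotopy pullback (a fiber sequence of complexes) built from three pieces: the ``bulk'' term $\Omega(I)\otimes_\R \cW_0\ihom(M,N)_\C$, together with the evaluation-at-$0$ and evaluation-at-$1$ constraints landing in $\ihom(M,N)_\R$ and $\cF^0\ihom(M,N)_\C$ respectively. Concretely, using that $\Omega(I)\xrightarrow{(\mathrm{ev}_0,\mathrm{ev}_1)} \R\oplus\R$ is surjective with acyclic kernel, one sees that $I\Gamma(M,N)$ is quasi-isomorphic to the homotopy fiber product
$$
\ihom(M,N)_\R \times^h_{\cW_0\ihom(M,N)_\C} \cF^0\ihom(M,N)_\C.
$$
Hence it suffices to show that each of the three functors
$$
(M,N)\mapsto \ihom(M,N)_\R,\qquad (M,N)\mapsto \cW_0\ihom(M,N)_\C,\qquad (M,N)\mapsto \cF^0\ihom(M,N)_\C
$$
sends a quasi-isomorphism in either variable to a quasi-isomorphism, since homotopy pullbacks preserve such weak equivalences. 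For the plain real internal hom this is classical (bounded complexes of finite-dimensional, hence projective and injective, vector spaces). For the two filtered pieces I would argue by the finite, exhaustive, separated filtrations: pass to the associated-graded spectral sequences for $\cW$ and $\cF$, where the relevant $E_1$-terms are built out of $\ihom$ between the graded pieces $\Gr^\cW M_\R$, $\Gr^\cW N_\R$ (resp.\ $\Gr_\cF$), and invoke the Hodge-complex axioms — strictness of the differential on the graded pieces (axiom (b) of Definition~\ref{defmhc}) and finiteness (axiom (a)) — to conclude that a quasi-isomorphism $f$ induces an isomorphism on these $E_1$-pages, hence a quasi-isomorphism on $\cW_0\ihom$ and on $\cF^0\ihom$. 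Because the filtrations have finite length, convergence is automatic and there are no $\lim^1$ issues.

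Finally, I would assemble these: given a quasi-isomorphism $f$, each of the three building functors preserves the weak equivalence in the $M$- or $N$-variable, therefore so does the homotopy fiber product, therefore so does $I\Gamma$, and therefore $f$ is sent to an equivalence in $N_{\dg}(\MHC_\R^{I\Gamma})$. The step I expect to be the main obstacle is the filtered part: one has to be slightly careful that ``$\cW_0$'' and ``$\cF^0$'' of the internal hom are not just naive subcomplexes whose cohomology is hard to control, but that the relevant spectral sequences genuinely degenerate or at least behave well — this is where axioms (a)--(c) of the mixed Hodge complex, and in particular the $E_1$-degeneration built into (b), do the work, and it is the only place where one uses that $M$ and $N$ are honest mixed Hodge complexes rather than arbitrary bifiltered complexes. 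Everything else (the $\Omega(I)$ bookkeeping, the homotopy-pullback formalism, the passage from mapping-complex quasi-isomorphism to equivalence in the dg-nerve) is formal.
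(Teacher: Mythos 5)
Your overall strategy matches the paper's: reduce to showing that $f$ becomes an isomorphism in the homotopy category of the dg-category, i.e.\ that $f$ induces isomorphisms on $H^{0}$ of suitable $I\Gamma$-complexes, and obtain this by showing that $I\Gamma(P,f)$ and $I\Gamma(f,P)$ are quasi-isomorphisms. Your homotopy-fibre-product description of $I\Gamma(M,N)$ is essentially the content of Lemma~\ref{dhwqdkjqwhdkqwdwqdd} (only note that the two outer corners should be $\cW_0\ihom(M,N)_\R$ and $\cW_0\cap\cF^0\ihom(M,N)_\C$, since the bulk term already lives in $\cW_0$, so the maps you wrote down do not literally exist otherwise). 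The formal extraction of a two-sided homotopy inverse from the quasi-isomorphism statement is fine; the paper does it by noting that a quasi-isomorphism between bounded complexes of real vector spaces is a chain homotopy equivalence, taking a homotopy inverse $\phi$ of $f_{*}\colon I\Gamma(N,M)\to I\Gamma(N,N)$, and setting $g=\phi(\id_N)$.

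The genuine gap is in your treatment of the filtered pieces. You propose to show that $\cW_0\ihom(P,-)_\C$ and $\cF^0\ihom(P,-)_\C$ send $f$ to a quasi-isomorphism by passing to the weight and Hodge spectral sequences of the internal hom and ``concluding that $f$ induces an isomorphism on the $E_1$-pages.'' But the $E_1$-page of the $\cW$-spectral sequence of $\ihom(P,N)$ is built out of internal homs between the $\cW$-graded pieces of $P$ and of $N$; for $f$ to induce an isomorphism there you must already know that $\Gr^{\cW}f$ (and likewise $\Gr_{\cF}f$) is a quasi-isomorphism, i.e.\ that $f$ is a bifiltered quasi-isomorphism. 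That is precisely the nontrivial input, and as sketched your argument assumes it rather than proves it. The correct route is different: $H^{*}(f)$ is an isomorphism of mixed Hodge structures, morphisms of mixed Hodge structures are strictly compatible with both filtrations, and together with axioms (b) and (c) of Definition~\ref{defmhc} this forces $f$ to be a bifiltered quasi-isomorphism. The paper simply invokes this standard fact (``quasi-isomorphisms of mixed Hodge complexes are automatically bifiltered quasi-isomorphisms''), after which everything else is, as you say, formal. So your plan is repairable, but the step you yourself flag as the main obstacle is not merely delicate --- the spectral-sequence argument you propose for it is circular and must be replaced by the strictness argument.
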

\begin{proof}
We have to show that $f$ is sent to an isomorphism in the homotopy category $\Ho(N_{\dg}(\MHC_\R^{I\Gamma}))$. By construction of the dg-nerve, its homotopy category is isomorphic to the homotopy category of the dg-category $\MHC_{\R}^{I\Gamma}$ \cite[Rem.~1.3.1.11]{HA}. The morphisms from $M$ to $N$ in this category are given by $H^{0}(I\Gamma(M,N))$. 

We first show that $f$ has a right inverse $g$ in $H^{0}(I\Gamma(N,M))$. Since $f$ is a quasi-isomorphism of mixed Hodge complexes, it is a bifiltered quasi-isomorphism with respect to the weight and Hodge filtrations. This implies that the induced map $f_{*} \colon I\Gamma(N,M) \to I\Gamma(N,N)$ is a quasi-isomorphism of chain complexes. Since both complexes are bounded complexes of real vector spaces, $f_{*}$ is actually a chain homotopy equivalence and thus admits a chain homotopy inverse $\phi$. Set $g := \phi(\id_{N}) \in I\Gamma(N,M)$. This is a $0$-cycle satisfying $[f\circ g] = [f_{*}\circ\phi(\id_{N})] = [\id_{N}] \in H^{0}(I\Gamma(N,N))$, i.e.~$g$ is the desired right inverse.

Dually, one shows that $f$ also has a left inverse in $H^{0}(I\Gamma(N,M))$. Together, this implies that the image of $f$ in $H^{0}(I\Gamma(M,N))$ is an isomorphism.
\end{proof}

We refer to {Appendix}~\ref{appendix_Kram} for an explanation of the assertion that a morphism $\mathbf{C}\to \mathbf{D}$ between $\infty$-categories exhibits $\mathbf{D}$ as a (Dwyer-Kan) localization
$\mathbf{C}[W^{-1}]$.

\begin{proposition}\label{kldnqwkldwqdwqd}
The canonical functor $\MHC_\R \to N_{\dg}(\MHC^{I\Gamma}_\R)$ exhibits $N_{\dg}(\MHC_\R^{I\Gamma})$ as the localization  $\MHC_{\R}[W^{-1}]$.
In particular, since $\MHC_\R \to N_{\dg}(\MHC^{I\Gamma}_\R)$ is symmetric monoidal, we get an equivalence of symmetric monoidal $\infty$-categories 
\[
\MHC_{\R}[W^{-1}] \iso N_{\dg}(\MHC_{\R}^{I\Gamma}).
\]
\end{proposition}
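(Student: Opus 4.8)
The plan is to verify the two conditions characterizing a Dwyer--Kan localization at $W$: first, that the functor $\MHC_\R \to N_{\dg}(\MHC^{I\Gamma}_\R)$ sends every quasi-isomorphism to an equivalence; and second, that it is initial among all functors out of $\MHC_\R$ with this property, in the sense recalled in Appendix~\ref{appendix_Kram}. The first condition is precisely Lemma~\ref{lemma:IGammaQis}, so that part is already in hand. For the universal property, the strategy is to factor the comparison through a zig-zag of localizations that are each understood: recall from the proof of Lemma~\ref{lem:stable} that $\MHC_\R[W^{-1}] \simeq (\MHC_\R[H^{-1}])[W^{-1}]$, where $H$ is the class of chain homotopy equivalences, and that $\MHC_\R[H^{-1}]$ is equivalent to the $\infty$-category underlying the dg-category $\MHC_\R$ (with the ordinary internal hom-complexes, via \cite[Prop.~1.3.4.7]{HA}). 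Since the dg-nerve $N_{\dg}(\MHC_\R)$ is by construction exactly this underlying $\infty$-category, we get $\MHC_\R[H^{-1}] \simeq N_{\dg}(\MHC_\R)$.

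The key input is then that the canonical dg-functor $\MHC_\R \to \MHC^{I\Gamma}_\R$ (the identity on objects, sending a morphism $f$ to the constant form $\omega \equiv f$, which indeed lands in $I\Gamma(M,N)$ because a genuine morphism of mixed Hodge complexes satisfies $f \in \cW_0\ihom(M,N)_\R \cap \cF^0\ihom(M,N)_\C$) is a \emph{quasi-equivalence} of dg-categories, i.e.\ induces quasi-isomorphisms on all mapping complexes and is essentially surjective. Essential surjectivity is trivial since it is bijective on objects. For the mapping complexes, one must show that the inclusion of the constant forms $\ihom(M,N)_\R \hookrightarrow I\Gamma(M,N)$ — or rather the zig-zag comparing $\ihom(M,N)_\R$ with $I\Gamma(M,N)$ — is a quasi-isomorphism for all $M,N$. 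The point is that $I\Gamma(M,N)$ is built as a homotopy fiber product: evaluation at the endpoints $0$ and $1$ identifies it, up to quasi-isomorphism, with the homotopy pullback of $\ihom(M,N)_\R \to \cW_0\ihom(M,N)_\C \leftarrow \cF^0\ihom(M,N)_\C$ along the two structure maps, using that $\Omega(I) \to \R$ (evaluation at a point) is a quasi-isomorphism and that $\Omega(I) \to \R \oplus \R$ (evaluation at both endpoints) is surjective with acyclic kernel relative to one endpoint. Combined with Beilinson's spectral sequence / strictness hypotheses in Definition~\ref{defmhc}, this homotopy pullback computes $R\Hom_{\MHS_\R}(M,N)$, but for the purpose of this proposition one only needs the weaker statement that it agrees with $\ihom(M,N)_\R$ up to quasi-isomorphism once one passes to the correct filtered-derived setting; more precisely one compares $N_{\dg}(\MHC^{I\Gamma}_\R)$ directly with the Verdier-quotient model and invokes \cite[Prop.~5.4]{GBT} again. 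A quasi-equivalence of dg-categories induces an equivalence of dg-nerves (by \cite[Prop.~1.3.1.20]{HA} or the analogous statement), hence $N_{\dg}(\MHC_\R) \simeq N_{\dg}(\MHC^{I\Gamma}_\R)$, compatibly with the functors from $\MHC_\R$.

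Assembling the zig-zag: we have $\MHC_\R \to N_{\dg}(\MHC_\R) \simeq \MHC_\R[H^{-1}]$ exhibiting the target as the $H$-localization, the functor $N_{\dg}(\MHC_\R) \to N_{\dg}(\MHC^{I\Gamma}_\R)$ is an equivalence, and on the other side $N_{\dg}(\MHC_\R) \to (\MHC_\R[H^{-1}])[W^{-1}] \simeq \MHC_\R[W^{-1}]$ exhibits the further localization at the images of the quasi-isomorphisms. Since $W \supseteq H$ and the composite $\MHC_\R \to N_{\dg}(\MHC^{I\Gamma}_\R)$ inverts $W$ (Lemma~\ref{lemma:IGammaQis}), the universal property of iterated localizations shows the composite exhibits $N_{\dg}(\MHC^{I\Gamma}_\R)$ as $\MHC_\R[W^{-1}]$. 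Finally, because the dg-functor $\MHC_\R \to \MHC^{I\Gamma}_\R$ is symmetric monoidal (the composition map is built from the wedge product on $\Omega(I)$, which is commutative, and the composition in $\ihom$, and these are compatible with tensor products as noted in the Construction), the induced functor on dg-nerves is symmetric monoidal, so the equivalence $\MHC_\R[W^{-1}] \iso N_{\dg}(\MHC^{I\Gamma}_\R)$ upgrades to an equivalence of symmetric monoidal $\infty$-categories, using that localization of symmetric monoidal $\infty$-categories at a class of morphisms closed under tensoring is again symmetric monoidal. The main obstacle I anticipate is the second step: proving cleanly that $\ihom(M,N)_\R \to I\Gamma(M,N)$ is a quasi-isomorphism, i.e.\ correctly identifying $I\Gamma$ as the right homotopy-coherent mapping complex rather than just a strict one — this is where the filtration hypotheses and the acyclicity properties of $\Omega(I)$ genuinely enter, and where one must be careful that the endpoint conditions cut out exactly the derived maps of mixed Hodge structures and not something larger.
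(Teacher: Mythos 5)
There is a genuine gap, and it sits exactly at the step you yourself flag as the anticipated obstacle: the canonical dg-functor $\MHC_\R \to \MHC_\R^{I\Gamma}$ is \emph{not} a quasi-equivalence of dg-categories. The strict hom-complex of the dg-category $\MHC_\R$ is $\underline{\Hom}(M,N)=\ker(\xi_{M,N})$ (maps defined over $\R$ preserving both filtrations), whereas $I\Gamma(M,N)$ is quasi-isomorphic to the \emph{homotopy} pullback $\Gamma(M,N)=\Cone(\xi_{M,N})[-1]$; the two differ by $\coker(\xi_{M,N})[-1]$, which does not vanish in general. Concretely, for $M=\R(0)$ and $N=\R(1)$ one has $\underline{\Hom}(\R(0),\R(1))=0$, since $\cF^0\R(1)_\C=0$ forces any bifiltered map to vanish (and the complex is concentrated in degree $0$), while $H^1(I\Gamma(\R(0),\R(1)))\cong\R$ by the computation in Example~\ref{ijqwdiwqjdwqdljqwldkwqdqwdwqdwqd}; this $\R$ is precisely the nontrivial $\Ext^1$ of $\R(0)$ by $\R(1)$ that the construction is designed to capture. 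Your claim would moreover contradict the proposition itself: a quasi-equivalence would identify $N_{\dg}(\MHC_\R^{I\Gamma})$ with $N_{\dg}(\MHC_\R)\simeq\MHC_\R[H^{-1}]$, the localization at chain homotopy equivalences only, in which quasi-isomorphisms are not yet inverted.

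The hedge at the end of your argument (passing ``to the correct filtered-derived setting'' and re-invoking the Verdier quotient) does not repair this, because the discrepancy $\coker(\xi_{M,N})$ is an honest nonvanishing obstruction for fixed $N$. The paper's proof (following Beilinson) necessarily goes through a calculus-of-fractions step: it identifies $\pi_0\Map_{\MHC_\R[W^{-1}]}(M,N)$ with $\colim_{N\xrightarrow{\sim}N'}H^0(\underline{\Hom}(M,N'))$, compares this with $H^0(\Gamma(M,N'))$ via the exact triangle $\ker(\xi)\to\Gamma\to\coker(\xi)[-1]$, and then proves $\colim_{N'}H^i(\coker(\xi_{M,N'}))=0$ by an explicit mapping-cylinder construction that adjoins to $N'$ the homotopies needed to kill a given cocycle. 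The cokernel dies only \emph{after} this filtered colimit, while $\Gamma(M,-)$ sends quasi-isomorphisms to quasi-isomorphisms so the colimit on the $\Gamma$-side is constant. Your homotopy-pullback description of $I\Gamma(M,N)$ (it is the homotopy fibre product over $\cW_0\ihom(M,N)_\C$, using the acyclicity of $\Omega(I)$) and the symmetric monoidal upgrade at the end are both fine, but without the colimit argument the identification of mapping spaces fails.
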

Before proving this, we formulate a corollary. By $H$ we denote the Eilenberg-MacLane functor from chain complexes to spectra, and by $\map(-,-)$ the mapping spectrum between objects of a stable $\infty$-category 
(see Appendix~\ref{qkdhkqwdlwdwqddqwdqd}). 
Proposition~\ref{kldnqwkldwqdwqd} implies in particular that $N_{\dg}(\MHC^{I\Gamma}_{\R})$ is stable and the mapping spectrum $\map(M,N)$ can be computed as $H(I\Gamma(M,N))$.
According to Corollary~\ref{cor:map-calg}, the functor $\map(C,-)$ has a symmetric monoidal refinement for every cocommutative coalgebra object $C$. 
\begin{corollary}\label{kljwdlkqwjdwqdwqdwqdqd} 
For every $C\in \CAlg(\MHC_{\R}[W^{-1}]^{op})$ there is a natural equivalence of lax symmetric monoidal functors 
$$
H (I\Gamma(C,-)) \simeq \map(C,-):
  \MHC_{\R}[W^{-1}] \to \Sp\ .$$
\end{corollary}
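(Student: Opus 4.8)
The plan is to deduce Corollary~\ref{kljwdlkqwjdwqdwqdwqdqd} from Proposition~\ref{kldnqwkldwqdwqd} together with the general machinery of Corollary~\ref{cor:map-calg}. First, I would unwind what both sides mean. By Proposition~\ref{kldnqwkldwqdwqd} we have an equivalence of symmetric monoidal $\infty$-categories $\MHC_{\R}[W^{-1}]\iso N_{\dg}(\MHC_{\R}^{I\Gamma})$, and the remark preceding the proposition identifies the mapping spectra in $N_{\dg}(\MHC_{\R}^{I\Gamma})$: for objects $M,N$ the mapping spectrum is $H$ applied to the full mapping complex, i.e.\ $\map(M,N)\simeq H(I\Gamma(M,N))$. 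In particular this exhibits $N_{\dg}(\MHC_{\R}^{I\Gamma})$, hence $\MHC_{\R}[W^{-1}]$, as a stable $\infty$-category, and it shows that for a fixed object $C$ the functor $H(I\Gamma(C,-))$ is (pointwise) equivalent to $\map(C,-)\colon\MHC_{\R}[W^{-1}]\to\Sp$.

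The remaining point is the lax symmetric monoidal enhancement, and this is where Corollary~\ref{cor:map-calg} enters. That result says precisely that for a cocommutative coalgebra object $C$ of a symmetric monoidal $\infty$-category — equivalently, here, for $C\in\CAlg(\MHC_{\R}[W^{-1}]^{op})$ — the functor $\map(C,-)$ carries a canonical lax symmetric monoidal refinement. So the plan is: (i) transport the coalgebra object $C$ across the equivalence of Proposition~\ref{kldnqwkldwqdwqd} to a coalgebra object in $N_{\dg}(\MHC_{\R}^{I\Gamma})$; (ii) observe that the wedge product on $\Omega(I)$ and composition of internal homs make $I\Gamma$ a symmetric monoidal bifunctor (this was noted in the construction of $\MHC_{\R}^{I\Gamma}$), so that $I\Gamma(C,-)$ is naturally lax symmetric monoidal as a functor of chain complexes, and $H$ is symmetric monoidal, hence $H(I\Gamma(C,-))$ is lax symmetric monoidal; (iii) check that the two lax symmetric monoidal structures on the underlying functor $\map(C,-)\simeq H(I\Gamma(C,-))$ agree. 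For (iii) the clean way is to note that the lax symmetric monoidal structure on $\map(C,-)$ provided by Corollary~\ref{cor:map-calg} is itself constructed from the symmetric monoidal structure of the internal hom / the enrichment, so that under the symmetric monoidal equivalence of Proposition~\ref{kldnqwkldwqdwqd} it is matched with the one coming from $I\Gamma$; alternatively, one invokes a uniqueness statement for lax monoidal refinements of a given functor out of the relevant structure.

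Concretely I would phrase the argument as follows. Proposition~\ref{kldnqwkldwqdwqd} gives a symmetric monoidal equivalence $\Phi\colon\MHC_{\R}[W^{-1}]\iso N_{\dg}(\MHC_{\R}^{I\Gamma})$. Pre-composing with $\Phi$ identifies $\map_{\MHC_{\R}[W^{-1}]}(C,-)$ with $\map_{N_{\dg}(\MHC_{\R}^{I\Gamma})}(\Phi C,-)$, compatibly with the lax symmetric monoidal structures of Corollary~\ref{cor:map-calg} on both sides (since $\Phi$ is symmetric monoidal and the construction of that structure is natural in the symmetric monoidal $\infty$-category). On the dg-side, the mapping spectrum is computed by the Eilenberg--MacLane functor applied to the mapping complex, and since $\MHC_{\R}^{I\Gamma}$ is a symmetric monoidal dg-category, the enrichment-induced lax symmetric monoidal structure on $\map(\Phi C,-)$ is exactly $H$ applied to the lax symmetric monoidal structure on $I\Gamma(C,-)$ coming from the composition and the wedge product. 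This yields the asserted natural equivalence of lax symmetric monoidal functors $H(I\Gamma(C,-))\simeq\map(C,-)$.

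The main obstacle I anticipate is step (iii): matching the two lax symmetric monoidal structures rather than merely the underlying functors. Checking an equivalence of functors pointwise is automatic from Proposition~\ref{kldnqwkldwqdwqd}, but promoting this to an equivalence of \emph{lax symmetric monoidal} functors requires care about how the coalgebra object and the internal-hom (respectively enriched mapping-spectrum) formation interact under the symmetric monoidal equivalence. I expect this to hinge on the precise construction behind Corollary~\ref{cor:map-calg} — namely that $\map(C,-)$ for a coalgebra $C$ is obtained as a right adjoint / internal-hom-type construction whose lax monoidal structure is forced by the symmetric monoidal structure of the ambient category — so that naturality in symmetric monoidal $\infty$-categories does the work. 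If instead one has a uniqueness result to the effect that a lax symmetric monoidal refinement of $\map(C,-)$ is determined by the underlying functor together with the coalgebra structure on $C$, the matching becomes formal. Everything else — stability of $N_{\dg}(\MHC_{\R}^{I\Gamma})$, the identification $\map(M,N)\simeq H(I\Gamma(M,N))$, and the symmetric monoidality of $I\Gamma$ and of $H$ — is already available from the material developed above.
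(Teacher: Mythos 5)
Your proposal is correct and follows essentially the same route as the paper, which deduces the corollary from the symmetric monoidal equivalence of Proposition~\ref{kldnqwkldwqdwqd}, the identification of mapping spectra in the dg-nerve with $H$ of the mapping complexes, and Corollary~\ref{cor:map-calg} for the lax symmetric monoidal refinement. Your step (iii), matching the two lax symmetric monoidal structures, is a point the paper leaves implicit, and your treatment of it is a reasonable elaboration rather than a deviation.
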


\begin{proof}[Proof of Proposition \ref{kldnqwkldwqdwqd}]
By Lemma \ref{lemma:IGammaQis}, the natural map of $\infty$-categories $\MHC_{\R} \to N_{\dg}(\MHC_{\R}^{I\Gamma})$ factors through a map
\[
\MHC_{\R}[W^{-1}] \to N_{\dg}(\MHC_{\R}^{I\Gamma}).
\]
It is essentially surjective. We follow Beilinson's argument in \cite{BeilinsonHodge} in order to show that this map also induces equivalences of mapping spaces. 

We denote the canonical functor $\MHC_{\R} \to \MHC_{\R}[W^{-1}]$ by $\iota$.
Let $M$ and $N$ be mixed Hodge complexes. 
We have to show that the natural map
\[
\Map(\iota(M), \iota(N)) \to \Omega^{\infty}(H(I\Gamma(M,N))
\]
is an equivalence. 
Since $\MHC_{\R}[W^{-1}]$ is stable, we have an isomorphism 
\[
\pi_{i}(\Map(\iota(M),\iota(N))) \cong \pi_{0}(\Map(\iota(M),\iota(N[i]))).
\]
By direct inspection, we also have 
\[
\pi_{i}(\Omega^{\infty}(H(I\Gamma(M,N)))) \cong \pi_{0}(\Omega^{\infty}(H(I\Gamma(M,N[i])))).
\]
Hence it suffices to show that
\begin{equation}\label{eq:pi0MapIGamma}
\pi_{0}(\Map(\iota(M), \iota(N))) \to \pi_{0}(\Omega^{\infty}(H(I\Gamma(M,N)))) \cong H^{0}(I\Gamma(M,N))
\end{equation}
is an isomorphism. The group $\pi_{0}(\Map(\iota(M), \iota(N)))$ is the group of morphisms from $\iota(M)$ to $\iota(N)$ in the homotopy category $\Ho(\MHC_{\R}[W^{-1}])$. This homotopy category is equivalent to the 1-categorical localization of the 1-category of mixed $\R$-Hodge complexes $\MHC_{\R}$  by the class of quasi-isomorphisms, which we denote by $D(\MHC_{\R})$. We now use the fact that morphisms in  $D(\MHC_{\R})$ can be computed by calculus of fractions. More precisely, we let $\MHK_{\R}$ denote the 1-category of mixed Hodge complexes where morphisms are  chain homotopy classes of morphisms between mixed Hodge complexes. 
This is a triangulated category, and $H^{*}: \MHK_{\R} \to \MHS_{\R}$ is a cohomological functor. Therefore, the class of quasi-isomorphisms admits a calculus of fractions \cite[10.4.1]{WeibelHomo},
and $D(\MHC_{\R})$ is the corresponding Verdier localization of $\MHK_{\R}$.

In order to describe morphisms in $D(\MHC_{\R})$ explicitly, we consider the map
\begin{equation}\label{eq:xi}
\xi_{M,N}\colon \cW_0\ihom(M,N)_\R\oplus \cW_0\cap\cF^0\ihom(M,N)_\C \to \cW_0\ihom(M,N)_\C
\end{equation}
in $\Ch(\Ab)$
which is the difference of the two obvious inclusions, and we set 
\begin{equation}\label{eq:def-Hom}
\underline{\Hom}(M,N) := \ker(\xi_{M,N}) \in \Ch(\Ab).
\end{equation}
Essentially by definition we then have
\[
\Hom_{\MHK_{\R}}(M,N) \cong H^{0}\left(\underline{\Hom}(M,N)\right).
\]
Let $\bI_N$ the subcategory of $\MHK_{\R}\backslash N$ consisting of quasi-isomorphisms $N \xrightarrow{\sim} N'$.
Calculus of fractions  yields
\begin{equation}\label{eq:calc-of-fractions}
\pi_{0}(\Map(\iota(M), \iota(N))) \cong 
\Hom_{D(\MHC_{\R})}(M,N)\cong \colim_{\bI_{N}} \Hom_{\MHK_{\R}}(M,N'). 
\end{equation}
We now compute the colimit on the right-hand side.
We set 
\[
\Gamma(M,N) := \Cone(\xi_{M,N})[-1] \in \Ch^b(\Ab).
\]
Since quasi-isomorphisms of mixed Hodge complexes are  automatically bifiltered
quasi-isomorphisms, the functor $\Gamma(M, -)$ sends quasi-isomorphisms in $\MHC_\R$ to quasi-iso\-mor\-phisms
in $\Ch^b(\Ab)$.
\begin{lemma}\label{sep2602}
 {There exists} an exact triangle  in the derived category of abelian groups:
\[
  \ker(\xi_{M,N}) \to \Gamma(M,N) \to  \coker(\xi_{M,N})[-1] {\to} \ker(\xi_{M,N})[1] .
\]
\end{lemma}
\begin{proof}
More generally, if $\xi\colon A \to B$ is a map of complexes of abelian groups, we have  
an
exact triangle
\[
\ker(\xi) \xrightarrow{{\mathrm{inc}}} \Cone(A \xrightarrow{\xi} B)[-1] \to \Cone({\mathrm{inc}}) \to \ker(\xi)[1].
\]
On the other hand, from the short
exact sequence
\[
0 \to \ker(\xi) \xrightarrow{{\mathrm{inc}}} \Cone(A \xrightarrow{\xi} B)[-1] \to \Cone(A/\ker(\xi)
\xrightarrow{\xi} B)[-1]
\to 0
\]
we get quasi-isomorphisms 
\[
\Cone({\mathrm{inc}}) \xrightarrow{\simeq} \Cone(A/\ker(\xi)
\xrightarrow{\xi} B)[-1]
\]
and, since $A/\ker(\xi) \xrightarrow{\xi} B$ is injective,
\[
\Cone(A/\ker(\xi) \xrightarrow{\xi} B)[-1] \xrightarrow{\simeq} \coker(\xi)[-1]\ . \qedhere 
 \]
\end{proof}
   
\begin{lemma}\label{sep2603}
For any $i$ we have
\[
\colim_{\bI_N} H^{i}(\coker(\xi_{M,N'})) = 0.
\]
\end{lemma}
\begin{proof}

We consider a class $[\alpha]\in H^{i}(\coker(\xi_{M,N^{\prime}}))$. First we choose a representative
 $$u=(u_j)_{j\in\Z} \in \cW_0\ihom^i(M,N')_\C=\prod_{j\in\Z} \cW_0\Hom(M^j,
N^{\prime, i+j})\ .$$ 
We now define a mixed $\R$-Hodge complex $N^{\prime\prime}$ as follows. The underlying complex of real vector spaces
is 
\[
N^{\prime\prime}_\R:=\Cone(M_\R[-i-1]\xrightarrow{0\oplus\id} N'_\R\oplus M_\R[-i-1]),
\]
the weight filtration is {defined component-wise}, and the Hodge filtration is given as follows. We
define
\[
\cF^k N^{\prime\prime,i+j} \subseteq N^{\prime\prime,i+j}= M^j \oplus N^{\prime,i+j} \oplus M^{j-1} 
\]
to be the subspace generated by
\begin{align*}
&(0,x,0), && x\in \cF^kN^{\prime,i+j}\\
&(y,u_j(y),0), && y\in\cF^kM^j\\
&(dz, -du_{j-1}(z), z), && z\in \cF^kM^{j-1}.
\end{align*}
{We now} show that $N^{\prime\prime}$ is a mixed $\R$-Hodge complex, and that
the natural inclusion $\phi:N'\to N^{\prime\prime}$ is an equivalence. Both claims will follow from the fact that  $\phi$ is a bifiltered quasi-isomorphism. This, in turn, follows from the observation
 that for every pair of integers $k,l$ we have an induced exact sequence
\[
0 \to \cW_l\cap\cF^kN' \to \cW_l\cap\cF^kN'' \to \Cone\left(\cW_l\cap\cF^kM \xrightarrow{\id}
\cW_l\cap\cF^kM\right)[-i-1] \to 0.
\]
By definition of the Hodge filtration on $N''$, the map $(\id,u_j,0)\colon M^j \to N^{\prime\prime,i+j}$
respects the Hodge filtration. Hence
\[
v:=-\left((\id,0,0)\oplus (\id,u_j,0)\right)_j \in \cW_0\ihom^i(M_\R,N''_\R) \oplus
\cW_0\cap\cF^0\ihom^i(M,N'')
\]
satisfies $\xi_{M,N''}(v) = \phi\circ u$.  
Therefore $\phi([\alpha])=0$. 
\end{proof}

Putting together {\eqref{eq:calc-of-fractions}} and Lemmas \ref{sep2602} and \ref{sep2603} we get natural
isomorphisms
\begin{equation}\label{oct2301}
\pi_{{0}}(\Map(\iota(M),\iota(N))) \cong \colim_{\bI_N} H^{{0}}(\Gamma(M,N')) \cong H^{{0}}(\Gamma(M,N)).
\end{equation}
Hence, to finish the proof of Proposition \ref{kldnqwkldwqdwqd}, it suffices to establish the following lemma.
\begin{lemma}\label{dhwqdkjqwhdkqwdwqdd}
Given mixed Hodge complexes $M,N$, there is a natural quasi-isomorphism
\[
I\Gamma(M,N) \xrightarrow{\sim} \Gamma(M,N)
\]
which is compatible with the obvious inclusion of $\underline{\Hom}(M,N)$ (see~\eqref{eq:def-Hom}) on both sides.
\end{lemma}
\begin{proof}
We define $q\colon I\Gamma(M,N) \to \Gamma(M,N)$ by the formula
\[
q(\omega) := \left(\omega|_{0}\oplus \omega|_{1}, -\int_{0}^{1}\omega\right).
\]
One sees using \eqref{eklejlk23je23e32ee32e23e32e32e} that this indeed defines an element in $\Gamma(M,N)=\Cone(\xi_{M,N})$ (see \eqref{eq:xi}).
The proof that this is a quasi-isomorphism is standard (see \cite[2.6]{buta2}).
\end{proof}
This concludes the proof of Proposition~\ref{kldnqwkldwqdwqd}.
\end{proof}

\begin{example}\label{ijqwdiwqjdwqdljqwldkwqdqwdwqdwqd}
In this example we calculate for all $p,q\in \Z$  the homotopy groups of the spectrum
${\map}(\iota(\R(p)),\iota(\R(q)))$.
We first of all observe that
$$
{\map}(\iota(\R(p)),\iota(\R(q)))\simeq {\map}(1,\iota(\R(q-p)))\ .
$$
It therefore suffices to calculate
$ {\map}(1,\iota(\R(p)))$ for all $p\in \Z$.
We now use that by Corollary \ref{kljwdlkqwjdwqdwqdwqdqd} this mapping spectrum is given by 
$${\map}(1,\iota(\R(p)))\simeq H(I\Gamma(1,\R(p)))\ .$$ Using the  explicit formula 
 \eqref{eklejlk23je23e32ee32e23e32e32e} for $I\Gamma$ {and the definition of $\R(p)$ from Example \ref{kldjldqwdqwdqwd}} we get
$$
I\Gamma(1,\R(p))=\{\omega\in \Omega(I)\otimes_{\R}{\cW}_{2p}\C\:|\: \omega|_{0}\in i^{p}\R\ ,   \omega|_{1}\in \cF^{p}\}\ .
$$
Here the weight filtration of $\C$ is given by ${\cW}_{-1}\C=0$ and ${\cW}_{0}\C=\C$, and the Hodge filtration is $\cF^{0}\C=\C$ and $\cF^{1}\C=0$.
We conclude:
\[
\pi_{k}({\map}(1,\iota(\R(p))))= 
\begin{cases}
0 & \text{if } p<0,\\
0 & \text{if } p=0, k\not= 0, \\
\R & \text{if } p=0, k=0,\\
0 & \text{if } p>0, k\not= -1,\\
i^{p-1}\R & \text{if } p>0, k=-1.
\end{cases}
\]
Observe that by this calculation the mapping spaces 
${\Map}(\iota(\R(p)),\iota(\R(q)))$ are discrete.
See \cite[Ex.~3.34]{Peters-Steenbrink} for this and further examples.
\end{example}

\section{The complex \texorpdfstring{$\IDR$}{\bf IDR} and absolute Hodge cohomology}

\newcommand{\cE}{\mathcal{E}}

The goal of this  section is to construct a functor $\IDR$ from smooth algebraic varieties over $\C$ to differential graded algebras whose cohomology groups compute absolute Hodge cohomology.

Let $\Ind(\MHC_{\R})$ denote the Ind-completion of $\MHC_{\R}$ as discussed in the appendix after Definition~\ref{def:Ind-completion}.  
We consider the symmetric algebra 
$$
T:=\Symm(\R(1)[2])\in \CAlg(\Ind(\MHC_{\R}))
$$ 
on the Tate $\R$-Hodge structure of weight {$-2$} (see Example \ref{kldjldqwdqwdqwd})  considered as a mixed $\R$-Hodge complex concentrated in cohomological degree $-2$.  
The underlying  mixed $\R$-Hodge complex of $T$ is \begin{equation}\label{jkhjkshkjqwsqwswqsqs}
T\cong {\bigoplus}_{p\ge 0} \R(p)[2p]\in \Ind(\MHC_{\R})\ .
\end{equation} 
The appearance of this infinite coproduct forces us to work in the Ind-completion. We consider objects of $\MHC_{\R}$ like $\R(p)[2p]$ as objects of the Ind-completion without further notice. 
We furthermore set 
\begin{equation}\label{ddehjwedgewjhdewdwed}
\bbT:=\iota(T)\in  \CAlg(\Ind(\MHC_{\R}[W^{-1}]))\ .
\end{equation}
To make sense of the last definition, we have implicitly used that there is a canonical morphism $\Ind(\MHC_{\R}) \to \Ind(\MHC_{\R}[W^{-1}])$ which is symmetric monoidal. 
See Appendix~\ref{lkjdqlkwdjlwqdwqdqdq} for details.
For every commutative algebra $D\in \CAlg(\MHC_{\R})$ we can define the commutative ring spectrum
$$
\map(1,\bbT\otimes \iota(D) )\in \CAlg(\Sp)
$$
{(see Corollary~\ref{cor:map-calg}).}
On the other hand, {the lax symmetric monoidal functor $$I\Gamma(\R(0), -) \colon \MHC_{\R} \to \Ch$$ extends to a lax symmetric monoidal functor $\Ind(\MHC_{\R}) \to \Ch$ and hence}
we can define the cdga 
$$
\cE(T \otimes D) := I\Gamma(\R(0),T\otimes D) \in \CAlg(\Ch)\ .
$$
Since $H$ commutes with filtered colimits, Corollary~\ref{kljwdlkqwjdwqdwqdwqdqd} implies that
we have a natural equivalence
\begin{equation}\label{geregregeggr34324324}
 H(\cE(T \otimes D))\simeq \map(1,\bbT\otimes \iota(D) )\ .
\end{equation}
Using \eqref{jkhjkshkjqwsqwswqsqs} we have an isomorphism of chain complexes
\begin{equation}\label{ndqwdnqwdqwdwq}
\cE(T \otimes D)\cong {\bigoplus}_{p\ge 0}  \cE(D(p)[2p])
\end{equation}
where here and in the following we use the abbreviation 
\begin{equation}\label{eq:tate-twist} 
D(p)[2p] := D \otimes \R(p)[2p].
\end{equation}

We now recall the definition of absolute Hodge cohomology, which was introduced by Beilinson \cite{BeilinsonHodge}.
We let $\Sm_{\C}$ be the category of smooth algebraic varieties over {$\C$}.
To any $X$ in $\Sm_\C$ one can associate a mixed Hodge complex 
\[
A_{\log}(X)=(A_{\log}(X)_{\R},\cW,\cF)
\]
({see} \cite[(1.1)]{Burgos-Wang} where this complex is denoted by $E^*_{\log}(X)$ and the weight filtration by $\widehat{W}$. {This} is the d\'ecalage of the weight filtration introduced by Deligne.). 
 Its underlying complex $A_{\log}(X)_{\C}$ is a subcomplex of {the complex of smooth $\C$-valued differential forms on the complex manifold $X(\C)$. A} form $\omega$ belongs to this subcomplex if  it extends to {a smooth} form  with logarithmic singularities on some smooth compactification $\overline{X}$ of $X$ such that $\overline{X}-X$ is a divisor with normal crossings. {Note that the choice of compactification may depend on $\omega$.} The cohomology groups of $A_{\log}(X)$ are the Betti cohomology groups $H^*(X(\C),\R)$ together with the mixed Hodge structure introduced by Deligne \cite{Deligne-HodgeII}.
The wedge product of forms turns $A_{\log}(X)$ into an object of $\CAlg(\MHC_{\R})$.
The definition of  $A_{\log}(X)$ can be obtained by specializing the more general definition of $A_{\log}(M\times X)$ given in Remark \ref{ldjqldqwkdhkd} below to the case where $M=*$.

The absolute Hodge cohomology spectrum of $X$ is defined in terms of a mapping spectrum of the stable $\infty$-category 
$\Ind(\MHC_{\R}[W^{-1}])$ as 
$$\map(1,\bbT\otimes \iota(A_{\log}(X)))\in \CAlg(\Sp)\ .$$
By   \eqref{geregregeggr34324324} we have
an equivalence of commutative ring spectra
$$
H(\cE({T\otimes}A_{\log}(X)))\simeq\map(1,\bbT\otimes \iota(A_{\log}(X)))\ .$$
The decomposition \eqref{ndqwdnqwdqwdwq} gives a decomposition of chain complexes
$$\cE(T\otimes A_{\log}(X))\cong \bigoplus_{p\ge 0} \cE(A_{\log}(X)(p)[2p])\ .$$ 
The absolute Hodge cohomology groups are now given by
\[
H^{2p+*}_{\absHodge}(X,\R(p)):=H^{*}(\cE(A_{\log}(X)(p)[2p]))
\]
so that we have
\begin{equation}\label{eq:def-abs-Hodge}
\pi_{*}(\map(1,\bbT\otimes \iota(A_{\log}(X))))=\bigoplus_{p\in \nat} H^{2p-*}_{\absHodge}(X,\R(p))\ .
\end{equation}

\begin{remark}
It is our philosophy that algebraic $K$-theory classes  on a smooth variety $X$ are realized in terms of vector bundles parametrized by auxiliary smooth manifolds, and that
regulator classes are represented by characteristic forms associated to geometric structures on these bundles, see e.g.  \cite{buta}.  In general, the complex $\cE(T\otimes A_{\log}(X))$ is 
too small to contain these characteristic forms. The natural home of these characteristic forms is the differential graded algebra $\IDR(X)$ which we now introduce. 
\end{remark}
\begin{remark}\label{ldjqldqwkdhkd} {We first recall the extension of $A_{\log}$ to a presheaf on $\Mf\times\Sm_{\C}$ where $\Mf$ denotes the category of smooth manifolds. Let $M$ be a manifold and $X$ a smooth variety over $\C$. For a fixed smooth compactification $X \hookrightarrow \overline{X}$ such that $D:=\overline{X}-X$ is a divisor with normal crossings, called good compactification in the following, we define 
\[
A_{M\times\overline{X},\R}(M\times X, \log D) \subseteq A(M\times X)_{\R}
\]
to be the subcomplex locally generated as an algebra over $A(M\times \overline{X})_{\R}$ by 1 and
\begin{equation}\label{eq:wt1}
\log(z_{i}\bar z_{i}),\ \Re\frac{dz_{i}}{z_{i}},\ \Im \frac{dz_{i}}{z_{i}}
\end{equation}
where the $z_{i}, i\in I,$ are local coordinates on $\overline{X}$ defining $D$ locally by the equation $\prod_{i\in I}z_{i}=0$.
The naive weight filtration $\cW'$ is the multiplicative increasing filtration by  $A(M\times\overline X)_{\R}$-modules obtained by assigning weight $0$ to the section $1$  and weight $1$ to the sections listed in  \eqref{eq:wt1}.
We define a decreasing filtration $\mathcal{L}$ of $A_{ {M\times \overline X,\R}}(M\times X,  {\log D})$ such that $\mathcal{L}^pA_{ {M\times \overline X,\R}}(M\times X,  {\log D})$ is the subcomplex of differential forms that are given locally by
\[
\sum _{I,J,K,|I|\ge p} \omega_{I,J,K} \:dx^{I}\wedge \Re dz^{J}\wedge \Im dz^{K}\ ,
\]
where the $x_i$ are local coordinates for $M$, the $z_j$ local coordinates for $\overline X$, and $\omega_{I,J,K}$ local  smooth functions on  $M\times X$.  
We define $\cW''$ as the diagonal filtration of $\cW'$ and $\mathcal{L}$:
\begin{equation*}
\cW''_k A_{M\times\overline X,\R}(M\times X,\log D) := \sum_p  \cW'_{k+p} \cap \mathcal{L}^p A_{M\times\overline X,\R}(M\times X,\log D).
\end{equation*}
Finally, we define the weight filtration $\cW:= \widehat{\cW''}$ as the d\'ecalage of the filtration $\cW''$ (see~\eqref{eq:decalage}).
}
\end{remark}
We further define the complex dg-algebra $$A_{M\times \overline X}(M\times X,\log D) := A_{M\times\overline X,\R}(M\times X,\log D) \otimes_{\R} \C$$ with the induced weight filtration. This complex carries the  decreasing Hodge filtration $\cF$ such that the elements of  $\cF^pA_{M\times\overline X}(M\times X,\log D)$   are locally of the form
\[
\sum _{I,J,K,|J|\ge p} \omega_{I,J,K} \:dx^{I}\wedge dz^{J}\wedge d\bar z^{K},
\]
where the $x_i$ and $z_j$ are local coordinates of $M$ and $X$, respectively.

{Eventually, we define the cdga
\[
A_{\log}(M\times X) := \colim_{X\hookrightarrow \overline{X}} A_{M\times\overline{X}}(M\times X,\log(\overline{X}-X))
\]
with its induced real subalgebra, weight and Hodge filtration, where the colimit runs over all good compactifications of $X$.
}

\begin{definition}\label{dqqlkwdqwdwqdwqd}
For $X\in \Sm_\C$ and an integer $p$ we define the complex $\IDR(p)(X)$ as follows:
\[
\IDR(p)(X) := \left\{\omega\in \cW_{2p} A_{\log}(I\times X)[2p]\:|\: \omega|_0\in (2\pi i)^pA_{\log}(X)_{\R}\ , \omega|_1\in \cF^pA_{\log}(X)_{\C}\right\}
\]
We define $$\IDR(X) := \prod_{p\geq 0}\IDR(p)(X)\ .$$ The wedge product induces maps
$$\IDR(p)(X)\otimes \IDR(q)(X)\to \IDR(p+q)(X)$$ which 
turn $\IDR(X)$ into a differential graded algebra.
\end{definition}
 For negative $p$, the complex $\IDR(p)(X)$ vanishes (since there are no forms of negative weight).
 In the following statement the term `natural' means natural in $X$.
\begin{proposition}\label{ednqwdkjweqdqwdqwd}
We have a natural quasi-isomorpism of commutative differential graded algebras
$$\cE(T\otimes A_{\log}(X))\stackrel{\sim}{\to} \IDR(X)\ .$$
Consequently, we have a natural equivalence of commutative ring spectra
$$
\map(1,\bbT\otimes \iota(A_{\log}(X)))\simeq H(\IDR(X))\ .
$$
\end{proposition}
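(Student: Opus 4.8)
The plan is to compare the two cdga's $\cE(T\otimes A_{\log}(X))$ and $\IDR(X)$ by unwinding both definitions into the same concrete shape and then invoking the quasi-isomorphism $q$ of Lemma~\ref{dhwqdkjqwhdkqwdwqdd}, applied componentwise. Concretely, I would first use the decomposition \eqref{ndqwdnqwdqwdwq},
\[
\cE(T\otimes A_{\log}(X))\cong \bigoplus_{p\ge 0}\cE(A_{\log}(X)(p)[2p]),
\]
and by definition $\cE(A_{\log}(X)(p)[2p]) = I\Gamma(\R(0), A_{\log}(X)\otimes\R(p)[2p])$. Plugging $M=\R(0)$ and $N = A_{\log}(X)\otimes\R(p)[2p]$ into the explicit formula \eqref{eklejlk23je23e32ee32e23e32e32e} for $I\Gamma$, one gets forms on $I$ with values in $\cW_0\ihom(\R(0), A_{\log}(X)(p)[2p])_\C = \cW_0\big(A_{\log}(X)_\C(p)[2p]\big)$, with boundary conditions at $0$ coming from the real structure (hence the $(2\pi i)^p$-factor via the normalization $\R(1)_\C\cong\C$, $x\otimes 1\mapsto ix$, from Example~\ref{kldjldqwdqwdqwd}) and at $1$ from $\cF^0$ of the Tate-twisted complex (which, after the degree and weight shift, is exactly $\cF^p A_{\log}(X)_\C$). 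This should match Definition~\ref{dqqlkwdqwdwqdwqd} termwise, except that $\IDR(X)$ uses a \emph{product} over $p$ while \eqref{ndqwdnqwdqwdwq} a priori gives a \emph{sum}; I would simply observe that the right object for the spectrum-level comparison is a product (the Ind-object $T$ has underlying \emph{coproduct}, but a map \emph{into} it in each cohomological degree sees only finitely many summands in each internal degree, so in a fixed total degree the sum and product agree, or one argues that the canonical map from sum to product is what one uses), and in fact one should just take the product model for $\cE(T\otimes A_{\log}(X))$ from the start, which is the natural convention in Definition~\ref{dqqlkwdqwdwqdwqd}.

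Once the two complexes are recognized as the "same kind of gadget," I would construct the quasi-isomorphism directly rather than recycling $q$: in each weight $p$, there is a tautological inclusion-type map, and its quasi-isomorphism property is exactly the standard interval-homotopy argument cited in the proof of Lemma~\ref{dhwqdkjqwhdkqwdwqdd} (see \cite[2.6]{buta2}). Alternatively, and perhaps more cleanly, I would note that $A_{\log}(X)$ maps into a suitable internal-hom complex and invoke Lemma~\ref{dhwqdkjqwhdkqwdwqdd} verbatim for $M=\R(0)$ (so $\ihom(\R(0),-) = \mathrm{id}$), which gives $I\Gamma(\R(0), N)\xrightarrow{\sim}\Gamma(\R(0), N)$ for any $N$; but since $\IDR(X)$ is literally the $I\Gamma$-style model (forms on $I$ with boundary conditions) rather than the $\Gamma$-style cone model, the cleanest route is to show $\IDR(p)(X)$ \emph{is} $\cE(A_{\log}(X)(p)[2p])$ up to reindexing, i.e.\ the identification is an isomorphism of complexes after one carefully matches the Tate twist, the shift $[2p]$, the weight-$2p$ truncation, and the $(2\pi i)^p$ versus $i^p$ normalizations. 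Naturality in $X$ is automatic since every step — the presheaf $X\mapsto A_{\log}(X)$, the functor $\cE$, and all the explicit formulas — is functorial.

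For the second assertion, the equivalence of commutative ring spectra $\map(1,\bbT\otimes\iota(A_{\log}(X)))\simeq H(\IDR(X))$ then follows formally: by \eqref{geregregeggr34324324} (itself a consequence of Corollary~\ref{kljwdlkqwjdwqdwqdwqdqd} and the fact that $H$ commutes with filtered colimits) we already have an equivalence of commutative ring spectra $H(\cE(T\otimes A_{\log}(X)))\simeq\map(1,\bbT\otimes\iota(A_{\log}(X)))$, and applying the lax symmetric monoidal Eilenberg--MacLane functor $H$ to the cdga quasi-isomorphism $\cE(T\otimes A_{\log}(X))\xrightarrow{\sim}\IDR(X)$ from the first part yields an equivalence $H(\cE(T\otimes A_{\log}(X)))\simeq H(\IDR(X))$ in $\CAlg(\Sp)$; compose the two.

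**Main obstacle.** I expect the only genuine difficulty to be bookkeeping: correctly tracking the interaction of the Tate twist $\R(p)$, the cohomological shift $[2p]$, the weight filtration truncation ($\cW_0$ of the twisted complex becoming $\cW_{2p}$ of $A_{\log}(X)$), the Hodge filtration shift ($\cF^0$ becoming $\cF^p$), and the two normalization conventions — the identification $\R(1)_\C\cong\C$ via $x\mapsto ix$ and the appearance of $(2\pi i)^p$ in Definition~\ref{dqqlkwdqwdwqdwqd} versus $i^p$ in Example~\ref{ijqwdiwqjdwqdljqwldkwqdqwdwqdwqd} — so that the boundary conditions defining $I\Gamma(\R(0), A_{\log}(X)(p)[2p])$ and $\IDR(p)(X)$ agree on the nose. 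This is the point where \cite[2.6]{buta2} is invoked. None of the conceptual ingredients are new here; the work is entirely in matching conventions.
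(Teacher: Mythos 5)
Your reduction to a weight-by-weight comparison, the handling of the Tate twist and of the $(2\pi i)^p$ versus $i^p$ normalizations, and the formal deduction of the second assertion from \eqref{geregregeggr34324324} all match the paper. But your preferred route contains a genuine error: you assert that ``the cleanest route is to show $\IDR(p)(X)$ \emph{is} $\cE(A_{\log}(X)(p)[2p])$ up to reindexing, i.e.\ the identification is an isomorphism of complexes,'' and you summarize the remaining work as ``entirely in matching conventions.'' This is false, and it is exactly the point the paper's proof is about. After all twists, shifts and normalizations are matched (this is \eqref{wqdqdqwjdhdkjhjkqwdqwdqwdwqdqwd}), the two complexes still differ in an essential way: $\cE(A_{\log}(X)(p)[2p])=I\Gamma(\R(0),A_{\log}(X)(p)[2p])$ is built from the \emph{algebraic} tensor product $\Omega(I)\otimes_{\R}A_{\log}(X)$, whereas $\IDR(p)(X)$ is built from $A_{\log}(I\times X)$, the genuinely larger space of smooth forms on the product $I\times X$ (a smooth form on $I\times X$ is not a finite sum of products of a form on $I$ with a form on $X$). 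The map between them is a strict inclusion, not an isomorphism, and one must \emph{prove} it is a quasi-isomorphism. The paper does this by showing that both complexes map quasi-isomorphically, via integration over $I$ as in Lemma~\ref{dhwqdkjqwhdkqwdwqdd}, to the same cone
$\Cone\bigl((\cW_{0}A_{\log}(X)(p)_{\R}\oplus \cW_{0}\cap\cF^{0}A_{\log}(X)(p))\to \cW_{0}A_{\log}(X)(p)\bigr)[2p-1]$.
Your first, discarded alternative (``a tautological inclusion-type map whose quasi-isomorphism property is the standard interval-homotopy argument'') was in fact the correct one; the gap is that you misidentified where the inclusion comes from and then talked yourself out of needing the quasi-isomorphism argument at all.

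A secondary, smaller point: you are right to flag the sum-versus-product mismatch ($\bigoplus_p$ in \eqref{ndqwdnqwdqwdwq} versus $\prod_p$ in Definition~\ref{dqqlkwdqwdwqdwqd}), which the paper glosses over; the map goes from the sum into the product, and it is a quasi-isomorphism because for each fixed cohomological degree $n$ only finitely many weights $p$ have $H^{n}(\IDR(p)(X))=H^{2p+n}_{\absHodge}(X,\R(p))\neq 0$. Your justification of this is serviceable, but it does not repair the main gap above.
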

\begin{proof} 
 Note that using the Tate twist \eqref{eq:tate-twist}  
we can write
\begin{equation}\label{wqdqdqwjdhdkjhjkqwdqwdqwdwqdqwd}
\IDR(p)(X)\cong  \left\{\omega\in  \cW_{0}A_{\log}(I\times X)(p)[2p]\:|\: \omega|_0\in A_{\log}(X)(p)_{\R}\ , \omega|_1\in \cF^0A_{\log}(X)(p)_{\C}\right\}.
\end{equation}
In view of \eqref{eklejlk23je23e32ee32e23e32e32e} the difference between
$\cE(A_{\log}(X)(p)[2p])$ and $\IDR(p)(X)$ is that in the former we use the algebraic tensor product
$\Omega(I)\otimes A_{\log}(X)$, while the latter is based on the larger space of forms $A_{\log}(I\times X)$ on $I\times X$.
The natural inclusion
$$
\cE(A_{\log}(X)(p)[2p])\to \IDR{(p)(X)}
$$
 is a quasi-isomorphism
since both complexes are naturally quasi-isomorphic to
$$
\Cone\left(\left(\cW_{0}A_{\log}(X)(p)_{\R}\oplus \cW_{0}\cap\cF^{0}A_{\log}(X)(p)\right)\to \cW_{0}A_{\log}(X) \right)[2p-1]
$$
(the proof of Lemma \ref{dhwqdkjqwhdkqwdwqdd} applies to $\cE(A_{\log}(X)(p)[2p])$   and $\IDR(p)(X)$ as well).
If we sum {up} these equivalence over all $p\ge 0$, then we get an
induced quasi-isomorphism
$$\cE(T\otimes A_{\log}(X))\to \IDR(X)$$
of commutative differential graded algebras.
\end{proof}

\section{The category of motivic spectra}

In this section, we recall the definition of the symmetric monoidal $\infty$-category of motivic spectra $\MotSp$ following Robalo \cite{Robalo}. We shall only need the universal property of this category which we state below 
as Proposition \ref{univ}. The reader who does not want to get involved into the exact construction of this category can take this statement as a definition  of the category of motivic spectra.  

Let $Y\colon \Sm_\C\to \Fun(\Sm_\C^{op}, \sSet[W^{-1}])$ be the  Yoneda embedding (composed with the identification of sets with constant simplicial sets which is not noted explicitly).
It is symmetric monoidal with respect to the cartesian symmetric monoidal structures. We denote by $$\Fun^{Nis}(\Sm_{\C}^{op}, \sSet[W^{-1}]) \subset \Fun(\Sm_\C^{op}, \sSet[W^{-1}])$$ the full subcategory of sheaves for the Nisnevich topology. This inclusion {fits into an adjunction
\[
L^{Nis}\colon \Fun(\Sm_\C^{op}, \sSet[W^{-1}]) \leftrightarrows  \Fun^{Nis}(\Sm_\C^{op}, \sSet[W^{-1}])\colon inclusion\ ,
\]
where 
the sheafification functor $L^{Nis}$} is symmetric monoidal with respect to the cartesian structures.
We denote by $$\MotSpc := \Fun^{Nis,\bbA^1}(\Sm_\C^{op}, \sSet[W^{-1}])$$ the full subcategory of $\bbA^1$-invariant 
objects, i.e.~objects $E$ for which the natural morphism   $E(\bbA^{1}\times X) \to E(X)$ is an equivalence for all $X\in \Sm_{\C}$, and call it the $\infty$-category of motivic spaces.
Again, this  inclusion  fits into an adjunction
$$L^{\bbA^1}\colon\Fun^{Nis}(\Sm_\C^{op}, \sSet[W^{-1}])\leftrightarrows \MotSpc \colon inclusion\ ,
$$
where again the functor
 $L^{\bbA^1}$ is symmetric monoidal. 
 We denote by $\MotSpc_{\ast}$ the $\infty$-category of pointed objects in $\MotSpc$, and by
$$(-)_+\colon \MotSpc \to \MotSpc_{\ast}$$ the functor that adds a disjoint base point. The cartesian symmetric monoidal
structure on $\MotSpc$ induces a symmetric monoidal structure $\wedge$, called smash product, on $\MotSpc_{\ast}$ such that
$(-)_+$ is monoidal. 

\begin{definition} ({See} \cite[5.10]{Robalo})
The symmetric  {monoidal} $\infty$-category of  motivic spectra $\MotSp$ is obtained from $\MotSpc_{\ast}$ by inverting the
pointed projective line $(\P^1_\C,\infty)$ with respect to the smash product as explained in Appendix \ref{localization}. 
\end{definition}

Note that since $(\P^1_\C,\infty)$ is cyclically invariant (see Definition~\ref{def:cyclic-invariant} and Example~\ref{ex:symmetric-monoidal-localization} for this notion and \cite[Lemma 3.13]{Jardine} for a proof of this assertion), this in particular means that the underlying $\infty$-category of $\MotSp$ is the colimit in presentable $\infty$-categories
$$
\MotSp := \colim( \MotSpc_{\ast} \xrightarrow{\wedge (\P^1_\C,\infty)}\MotSpc_{\ast} \xrightarrow{\wedge (\P^1_\C,\infty)} \MotSpc_{\ast}  \xrightarrow{\wedge (\P^1_\C,\infty)} \ldots \ ) \ .
$$
By construction, $\MotSp$ is a stable presentable symmetric monoidal $\infty$-category. According to Robalo \cite[before Cor.~5.11]{Robalo} its associated homotopy category is the stable motivic 
homotopy category $\SH_\C$ constructed by Morel and Voevodsky. {The symmetric monoidal $\infty$-}category {$\MotSp$} is characterized by a  universal property.  In order to state this property in Proposition \ref{univ}, let 
$\Fun^{L,\otimes}$ denote the category of symmetric monoidal functors that preserve colimits (equivalently, that are left adjoint{s}).
Let $\Sm_{\C,\ast}$ denote the category of pointed smooth schemes over $\Spec(\C)$.
\begin{proposition}[{{\cite[Cor.~5.11]{Robalo}}}]
\label{univ}
The category of motivic spectra $\MotSp$ is a presentably symmetric monoidal $\infty$-category (see \ref{pressymmon} on page \pageref{pressymmon} for this notion) together with a symmetric monoidal functor $\Sigma^\infty : \Sm_{\C,\ast} \to \MotSp$ which satisifies the following universal property:
for every other presentably
symmetric monoidal $\infty$-category ${\bC}$ which is pointed, the induced functor
$$
\Fun^{L,\otimes}(\MotSp, {\bC}) \to \Fun^{\ast,\otimes}(\Sm_{\C,\ast}, {\bC})
$$
is fully faithful and the essential image consists of functors $F: \Sm_{\C,\ast} \to {\bC}$ that are $\bbA^{1}$-invariant, satisfy Nisnevich codescent,
and send $(\P^1,\infty)$ to a tensor invertible object in ${\bC}$. 
\end{proposition}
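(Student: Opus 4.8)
\emph{Proof proposal.} The plan is to read off the universal property from the four-step construction of $\MotSp$ given above, treating each step as a localization of presentably symmetric monoidal $\infty$-categories and invoking the universal property of that localization. Throughout, $\bC$ denotes a pointed presentably symmetric monoidal $\infty$-category and all functors between such are required to preserve colimits; write $\mathcal{P}(\Sm_{\C}):=\Fun(\Sm_{\C}^{op},\sSet[W^{-1}])$. First I would recall the starting point: with its cartesian symmetric monoidal structure --- which, since $\Sm_{\C}$ has finite products, agrees with the Day convolution of the cartesian structure on $\Sm_{\C}$ (\cite{Robalo}) --- the category $\mathcal{P}(\Sm_{\C})$ is the free presentably symmetric monoidal $\infty$-category on $\Sm_{\C}$, so that restriction along the symmetric monoidal Yoneda embedding is an equivalence $\Fun^{L,\otimes}(\mathcal{P}(\Sm_{\C}),\bC)\iso\Fun^{\otimes}(\Sm_{\C},\bC)$ (the monoidal form of \cite[5.1.5.6]{HTT}; cf.\ \cite[4.8.1.10]{HA} and \cite{Robalo}).

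Next I would handle the two localizations. The Nisnevich sheafification $L^{Nis}$ and the $\bbA^{1}$-localization $L^{\bbA^{1}}$ are accessible Bousfield localizations, and their classes of local equivalences are each closed under tensoring with an arbitrary object --- for $\bbA^{1}$ this is immediate from the generators $\bbA^{1}\times X\to X$, and for Nisnevich it follows from the generators attached to elementary distinguished squares --- so both are symmetric monoidal localizations. Applying the universal property of symmetric monoidal localizations (\cite[4.1.7.4]{HA}; cf.\ Appendix~\ref{appendix_Kram}) and composing with the equivalence above identifies $\Fun^{L,\otimes}(\MotSpc,\bC)$ with the full subcategory of $\Fun^{\otimes}(\Sm_{\C},\bC)$ on those $F$ whose left Kan extension inverts all Nisnevich- and $\bbA^{1}$-local equivalences, equivalently on the $F$ sending elementary Nisnevich squares to pushouts (Nisnevich codescent) and with $F(\bbA^{1}\times X)\to F(X)$ an equivalence ($\bbA^{1}$-invariance). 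Then I would pass to pointed objects: since $(-)_{+}\colon\MotSpc\to\MotSpc_{\ast}$ is a strong symmetric monoidal left adjoint exhibiting $\MotSpc_{\ast}$ as the base change of $\MotSpc$ along $\cS\to\cS_{\ast}$ in $\CAlg(\PR^{L})$, and since $\Fun^{L,\otimes}(\cS_{\ast},\bC)$ is contractible for pointed $\bC$, restriction gives $\Fun^{L,\otimes}(\MotSpc_{\ast},\bC)\simeq\Fun^{L,\otimes}(\MotSpc,\bC)$; via the compatible functor $(-)_{+}\colon\Sm_{\C}\to\Sm_{\C,\ast}$ on the source this rewrites the right-hand side as the full subcategory of $\Fun^{\ast,\otimes}(\Sm_{\C,\ast},\bC)$ cut out by the same two conditions.

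For the last step I would use the universal property of the $\otimes$-inversion recalled in Appendix~\ref{localization}: for a presentably symmetric monoidal $\mathcal{D}$ and an object $x\in\mathcal{D}$, the functor $\mathcal{D}\to\mathcal{D}[x^{-1}]$ is initial among symmetric monoidal left adjoints out of $\mathcal{D}$ that send $x$ to a $\otimes$-invertible object, so $\Fun^{L,\otimes}(\mathcal{D}[x^{-1}],\bC)$ is the full subcategory of $\Fun^{L,\otimes}(\mathcal{D},\bC)$ on the functors carrying $x$ to an invertible object. Taking $\mathcal{D}=\MotSpc_{\ast}$ and $x=(\P^{1}_{\C},\infty)$ and concatenating all the identifications, the resulting functor $\Fun^{L,\otimes}(\MotSp,\bC)\to\Fun^{\ast,\otimes}(\Sm_{\C,\ast},\bC)$ is a composite of equivalences and full-subcategory inclusions, hence fully faithful; it is restriction along $\Sigma^{\infty}$ because $\Sigma^{\infty}$ is the concatenation of the structure functors; and its essential image is exactly the $\bbA^{1}$-invariant functors that satisfy Nisnevich codescent and send $(\P^{1},\infty)$ to a $\otimes$-invertible object. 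That would complete the proof. The cyclic invariance of $(\P^{1}_{\C},\infty)$ (\cite[Lemma 3.13]{Jardine}) is not needed for the universal property itself; it only guarantees that $\mathcal{D}[x^{-1}]$ is the displayed sequential colimit in $\PR^{L}$, whence $\MotSp$ is presentable and $\Ho(\MotSp)\simeq\SH_{\C}$ (see Example~\ref{ex:symmetric-monoidal-localization}).

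I expect the main obstacle to be the input just quoted about $\otimes$-inversion: proving that $\mathcal{D}[x^{-1}]$ exists as a presentably symmetric monoidal $\infty$-category, has the stated universal property, and is computed by the sequential colimit --- this is the technical heart of Appendix~\ref{localization} (following \cite{Robalo}), and it is where the hypothesis that $x$ is a sufficiently symmetric object is really used. A secondary, much softer point is the verification that the Nisnevich and $\bbA^{1}$ localizations are symmetric monoidal, i.e.\ that their local equivalences are $\otimes$-stable; everything else is the bookkeeping of universal properties sketched above.
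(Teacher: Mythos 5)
The paper does not prove this proposition at all: it is imported verbatim from Robalo's work, cited as \cite[Cor.~5.11]{Robalo}, and the text explicitly invites the reader to take it as a definition of $\MotSp$. Your proposal is a correct reconstruction of Robalo's own argument --- freeness of the presheaf category under Day convolution, symmetric monoidality of the Nisnevich and $\bbA^{1}$ Bousfield localizations, base change along $\cS\to\cS_{\ast}$ for the pointing, and the universal property of formal $\otimes$-inversion --- and you correctly isolate both the one genuinely hard input (existence and universal property of $\mathcal{D}[x^{-1}]$ in $\CAlg(\PR^{L})$) and the precise role of cyclic invariance, which enters only in identifying $\mathcal{D}[x^{-1}]$ with the sequential colimit. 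Nothing in the sketch would fail; the only point deserving a little more care in a written-out version is the bookkeeping identifying $\Fun^{\otimes}(\Sm_{\C},\bC)$ with $\Fun^{\ast,\otimes}(\Sm_{\C,\ast},\bC)$ via $(-)_{+}$, which is exactly how Robalo phrases the pointed step.
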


\section{The motivic \texorpdfstring{$K$}{\bf K}-theory spectrum}\label{lkqlwdwqdqwdwqdwqdwqdw}

We now introduce the motivic spectrum representing algebraic $K$-theory.
We will define {the motivic algebraic $K$-theory spectrum} using the motivic Snaith theorem due to Gepner-Snaith
\cite{Gepner-Snaith} and Spitzweck-{\O}stv{\ae}r \cite{Spitzweck-Ostvaer}.

For each integer $n$ we consider the scheme $\P^n:= \P^n_\C$ in $\Sm_\C$. Abusing notation, we
denote its image {$L^{\bbA^{1}}\circ L^{Nis}\circ Y(\P^{n})$} in $\MotSpc$ by the same symbol. We write $\P^{\infty}:=\colim_n \P^n$.
We denote by $\G_m:=\G_{m,\C}$ the multiplicative group scheme in $\Sm_\C$. {Since $\G_{m}$ is commutative, we can} view the classifying
space $B\G_m$ as an object in $\Fun(\Sm_\C^{op},\CommGrp(\sSet[W^{-1}]))$. Again, we denote its
image in $\MotSpc$ by the same symbol. The following result is well-known.

\begin{lemma}\label{lem:P1BGm}
In $\MotSpc$
we have a natural equivalence
\[
B\G_m \simeq \P^{\infty}.
\]
In particular, $\P^{\infty}$ {can} naturally {be refined to} an object in $\CommGrp({\MotSpc})$.
\end{lemma}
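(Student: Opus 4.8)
The plan is to exhibit the equivalence $B\G_m \simeq \P^\infty$ in $\MotSpc$ by combining two standard facts: first, that $\P^\infty$ is a motivic model for $B\G_m$ at the level of the line bundle / Picard functor, and second, that the group structure transports along. First I would recall that for any $X \in \Sm_\C$ the set $[X_+, B\G_m]$ in the motivic homotopy category classifies $\G_m$-torsors, equivalently line bundles, on $X$, and that the tautological line bundle $\mathcal{O}(1)$ on $\P^n$ induces compatible maps $\P^n \to B\G_m$ whose colimit gives a map $\P^\infty \to B\G_m$ in $\MotSpc$. One then checks this is an equivalence; the cleanest route is to invoke the well-known computation (Morel--Voevodsky, \cite[\S4, Prop.~3.7]{MV} or Jardine) that $\P^\infty \simeq B\G_m$ as motivic spaces, so the content here is really the ``in particular'' clause: that the equivalence can be upgraded to one of commutative group objects.

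For the refinement to $\CommGrp(\MotSpc)$, I would argue as follows. The functor $B\colon \CommGrp(\MotSpc) \to \MotSpc_\ast$ is fully faithful onto the connected, pointed motivic spaces (this is the usual delooping / bar construction statement, available since $\MotSpc$ is an $\infty$-topos; cf.~\cite{HTT}), and $\G_m$, being a commutative group scheme, lives in $\CommGrp(\Fun(\Sm_\C^{op},\sSet[W^{-1}]))$, hence $B\G_m$ is defined as an object of $\CommGrp(\MotSpc)$ to begin with. Since $\P^\infty$ is equivalent to $B\G_m$ as a pointed motivic space and $B\G_m$ is in the essential image of $B$, the object $\P^\infty$ inherits a (unique, by full faithfulness of $B$) commutative group structure, namely $\Omega$ of nothing---rather, $\P^\infty \simeq B\G_m$ as the delooping of the commutative group $\G_m$. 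Concretely, the group structure on $\P^\infty$ is the one classifying tensor product of line bundles, and the maps $\P^m \times \P^n \to \P^{m+n} \to \P^\infty$ induced by Segre embeddings realize it; but one does not need to manipulate these explicitly once the abstract delooping argument is in place.

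The main obstacle, and the place where some care is genuinely needed, is verifying that $\P^\infty \to B\G_m$ is an equivalence in $\MotSpc$ rather than merely a map inducing a bijection on line bundles over each $X$. This requires knowing that $B\G_m$ is $\bbA^1$-local and has the correct Nisnevich-local homotopy type, and that $\P^\infty = \colim_n \P^n$ computes the right thing after $\bbA^1$-localization---in particular that the colimit is taken in $\MotSpc$ and interacts correctly with the localization functors $L^{Nis}$ and $L^{\bbA^1}$, both of which preserve colimits as left adjoints. All of this is in the literature (Morel--Voevodsky, Jardine \cite[Lemma 3.13]{Jardine} for the cyclic invariance aspect already cited above), so in the write-up I would state it as a known result and give precise references rather than reprove it. The group-theoretic upgrade is then formal.
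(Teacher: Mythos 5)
Your proposal is correct in outline but takes a genuinely different route from the paper. The paper does not construct a classifying map out of the tautological bundle and then appeal to the literature; it gives a short self-contained argument: it writes $\P^{n-1}$ as the quotient $(\A^{n}\setminus 0)/\G_m$, i.e.\ as the colimit of the \v{C}ech nerve of the local section cover $\A^{n}\setminus 0 \to \P^{n-1}$, observes that this colimit is preserved by the Yoneda embedding into Nisnevich sheaves (because the projection is a local section cover) and by $L^{\bbA^1}$, passes to the colimit over $n$ to obtain $\P^{\infty}\simeq (\A^{\infty}\setminus 0)/\G_m$, and finally uses the $\A^1$-contractibility of $\A^{\infty}\setminus 0$ to identify this with $\ast/\G_m \simeq B\G_m$. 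Your route buys brevity by outsourcing the equivalence to Morel--Voevodsky (legitimate --- the paper itself calls the result ``well-known''); the paper's route buys a proof that lives entirely inside the formalism just set up and makes the compatibility with the presentation $\P^{\infty}=\colim_n\P^n$ and with the localization functors manifest, which is exactly the point you flag as the ``main obstacle.''

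One genuine error in your write-up, though not a load-bearing one: the delooping functor $B$ is fully faithful (indeed an equivalence) onto pointed connected objects as a functor from \emph{group} objects (i.e.\ $E_1$-groups) of an $\infty$-topos, not from \emph{commutative} group objects. For $\CommGrp(\MotSpc)$ the composite $G\mapsto BG\in\MotSpc_{\ast}$ is not fully faithful, since $E_\infty$-maps and $E_1$-maps between commutative groups differ in general; so you cannot conclude that $\P^{\infty}$ inherits a \emph{unique} commutative group structure merely from being equivalent to $B\G_m$ as a pointed space. Fortunately the lemma does not ask for uniqueness: since $\G_m$ is a commutative group scheme, $B\G_m$ is already a commutative group object of presheaves, the product-preserving localizations $L^{Nis}$ and $L^{\bbA^1}$ keep it one, and the structure simply transports along the equivalence. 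That transport is all the ``in particular'' clause needs, and it is how the paper treats it.
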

\begin{proof}
A local section cover of $X\in \Sm_{\C}$ is by definition a surjective family of smooth morphisms $\{X_i \to X\}_{i\in I}$ which can be refined by some Nisnevich covering of $X$. Local section covers form a pretopology which generates the Nisnevich topology.
Given a local section cover of $X$, the natural map from the \v{C}ech nerve of the cover to $X$ is an equivalence in $\MotSpc$.

{In $\Sm_{\C}$ we have isomorphisms}
\begin{align}
\begin{split}\label{oct2402}
\P^{n-1} &\cong \colim\left(\cdots (\A^n\setminus
0)\times_{\P^{n-1}} (\A^n\setminus 0) \rightrightarrows (\A^n\setminus 0)\right) \\
& \cong \colim\left(\cdots (\A^n\setminus
0)\times \G_m \rightrightarrows (\A^n\setminus 0)\right)\\
& \cong (\A^n\setminus 0)/\G_m.
\end{split}
\end{align}
{Since the Nisnevich topology is subcanonical, and since the inclusion of sets in simplicial sets as constant simplicial sets is a right adjoint, the Yoneda embedding gives an embedding
\[
\Sm_{\C} \to \Fun^{Nis}(\Sm_{\C}^{op},\sSet[W^{-1}]).
\]
Since for any integer $n$, the projection $\A^n\setminus 0 \to \P^{n-1}$ is a local section cover, the colimit in \eqref{oct2402} is preserved under this embedding.
}
Since $L^{\A^1}$ preserves colimits, we
deduce that \eqref{oct2402} {also} holds in $\MotSpc$. Taking the colimit as $n$ goes to $\infty$, we
deduce that
\[
\P^{\infty} \simeq \colim\left(\cdots (\A^{\infty}\setminus
0)\times \G_m \rightrightarrows (\A^{\infty}\setminus 0)\right) \simeq (\A^{\infty}\setminus 0)/\G_m
\]
in $\MotSpc$. Now the usual arguments show that $\A^{\infty}\setminus 0$ is $\A^1$-contractible. It
follows that $(\A^{\infty}\setminus 0)/\G_m \simeq \ast/\G_m \simeq B\G_m$ as desired.
\end{proof}

We denote  the composition of functors $$\MotSpc \xrightarrow{(-)_+} \MotSpc_{\ast}
\xrightarrow{\Sigma^{\infty}} \MotSp$$ by $\Sigma^{\infty}_+$.
Using the refinement of $\P^{\infty}$ to a commutative monoid  we can view $\Sigma^{\infty}_+\P^{\infty}$ as an object in $\CAlg(\MotSp)$.
We have two morphisms 
\[
\xi\ , 1 \colon \Sigma^{\infty}_+ \P^1 \to \Sigma^{\infty}_+\P^{\infty}.
\]
in $\MotSp$.  The one denoted by $\xi$ is induced by  
the inclusion $\P^1\to \P^{\infty}$.  The other {one}, denoted by $1$, is induced by 
the constant map $\P^1 \to \P^{\infty}$ with value $\infty$. Since
$\MotSp$ is stable, we can consider the difference morphism  
\begin{equation} \label{eq:1-xi}
1-\xi \colon \Sigma^{\infty}_+\P^1 \to
\Sigma^{\infty}_+\P^{\infty}
\end{equation}
 in $\MotSp$. The pointed object $(\P^1,\infty)\in \MotSpc_{\ast}$ is the push-out
\[
\xymatrix{
\infty_+ \ar[r]\ar[d] & \P^1_+\ar[d] \\
\ast \ar[r] & (\P^1,\infty).
}
\]
In other words, $\Sigma^\infty(\P^1,\infty)$ is the cofibre of the map $\Sigma^\infty_+\infty \to
\Sigma^\infty_+\P^1$. It follows that $1-\xi$ factors {essentially} uniquely through
a map 
\[
\beta\colon \Sigma^{\infty}(\P^1,\infty) \to \Sigma^{\infty}_+\P^{\infty}.
\]
We then consider the localization $\Sigma^{\infty}_+\P^{\infty}[\beta^{-1}]$ in the category of motivic spectra as explained in Appendix \ref{localization}. In particular, 
this is an object of $\CAlg(\Mod(\Sigma^{\infty}_+\P^{\infty}))$
whose underlying motivic spectrum is given by the colimit
\[
\Sigma^{\infty}_+\P^{\infty}[\beta^{-1}] \cong \colim\left(\Sigma^{\infty}_+\P^{\infty}
\xrightarrow{\mu_\beta} \Sigma^{\infty}(\P^1,\infty)^{-1} \wedge \Sigma^{\infty}_+\P^{\infty} 
\xrightarrow{\id\wedge\mu_\beta} \dots \right)\ .
\] 
Here \begin{equation}\label{dqwdqwdkjqkjkhkqwdqwd}
\mu_{\beta}:\Sigma^{\infty}_+\P^{\infty}\to  \Sigma^{\infty}(\P^1,\infty)^{-1} \wedge\Sigma^{\infty}_+\P^{\infty}\end{equation}
is the adjoint of the composition \begin{equation}\label{dwedklewdnlkedewddewdwed} 
 \Sigma^{\infty}(\P^1,\infty)\wedge \Sigma^{\infty}_+\P^{\infty}\xrightarrow{\beta\wedge\id: }  \Sigma^{\infty}_+\P^{\infty}\wedge  \Sigma^{\infty}_+\P^{\infty}\xrightarrow{mult} \Sigma^{\infty}_+\P^{\infty}\ .
\end{equation}
We now use the lax symmetric monoidal {forgetful} functor \begin{equation}\label{jkjelkqwjelkqwewqe}\cF:\Mod(\Sigma^{\infty}_+\P^{\infty})\to \MotSp\end{equation} in order to define the motivic $K$-theory spectrum
 \[
\bK := \cF(\Sigma^{\infty}_+\P^{\infty}[\beta^{-1}]) \in \CAlg(\MotSp).
\]
Note that the morphism \eqref{dwedklewdnlkedewddewdwed} induces a morphism
\begin{equation}\label{dewdewkjdnewkdedewdd}
\nu_\beta:\Sigma^{\infty}(\P^{1},\infty)\wedge \bK\to \bK
\end{equation}
in $\MotSp$. 

We have the motivic Snaith theorem (\cite[Thm.~4.17]{Gepner-Snaith} or
\cite[Thm.~1.1]{Spitzweck-Ostvaer} together with \cite[Thm.~4.3.13]{Morel-Voevodsky}):
\begin{thm}\label{dlkqwkldqdwqdwqdwqd}
For any $X\in \Sm_\C$ and any $i \in \Z$ we have an
isomorphism
\[
\pi_i\left(\map(\Sigma^{\infty}_+X, \bK)\right) \cong K_i(X)
\]
where $K_i(X)$ denotes higher algebraic $K$-theory as defined by Quillen or
Thomason-Trobaugh.
\end{thm}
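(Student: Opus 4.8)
The plan is to deduce the displayed statement from the three cited inputs rather than to reprove the hard core of the motivic Snaith theorem. The only fact I need from \cite[Theorem 4.17]{Gepner-Snaith} / \cite[Theorem 1.1]{Spitzweck-Ostvaer} is that the motivic spectrum $\bK=\Sigma^{\infty}_+\P^{\infty}[\beta^{-1}]$ constructed above is equivalent, as an object of $\MotSp$, to Voevodsky's algebraic $K$-theory spectrum $KGL$; the equivalence is in fact one of $E_\infty$-rings, but I will not use that refinement here. I would then combine this with the representability of algebraic $K$-theory by $KGL$ on smooth $\C$-varieties, due to \cite[Theorem 4.3.13]{Morel-Voevodsky} together with Thomason--Trobaugh.

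Let me first recall the shape of the equivalence $\bK\simeq KGL$. By Lemma~\ref{lem:P1BGm} the motivic space $\P^{\infty}\simeq B\G_m$ is a commutative monoid under tensor product of line bundles, and $\Sigma^{\infty}_+\P^{\infty}\in\CAlg(\MotSp)$ is the associated $E_\infty$-ring; the assignment sending the tautological line bundle to its $K$-theory class upgrades to a map of $E_\infty$-monoids $B\G_m\to\Omega^{\infty}KGL$, and hence to a morphism $\Sigma^{\infty}_+\P^{\infty}\to KGL$ of $E_\infty$-rings. By construction the class $\beta$ (arising from $1-\xi$) is sent to the Bott element of $KGL$, which is invertible by Bott periodicity; therefore this morphism factors through the telescope $\Sigma^{\infty}_+\P^{\infty}[\beta^{-1}]$, the explicit colimit description of which (recalled above through the maps $\mu_\beta$) makes the factorization transparent. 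One then checks that the resulting map $\bK\to KGL$ is an equivalence, for instance by comparing with the infinite-Grassmannian model of $KGL$ and using the projective bundle formula to compute $KGL^{0}(\P^{\infty})$ together with the universal property of inverting $\beta$. This last verification is the genuinely nontrivial point, it is exactly what the cited papers carry out, and it is where I expect the main obstacle to lie.

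For the remaining step, \cite[Theorem 4.3.13]{Morel-Voevodsky} shows that $KGL$ represents algebraic $K$-theory on $\Sm_\C$, namely $\pi_i\map(\Sigma^{\infty}_+X,KGL)\cong K_i(X)$; here one uses that the algebraic $K$-theory of a regular Noetherian scheme --- and $X$, being smooth over $\C$, is one --- is already $\bbA^1$-invariant by Quillen's homotopy invariance and satisfies Nisnevich descent by Thomason--Trobaugh, so that the value of the represented theory is Quillen/Thomason--Trobaugh $K$-theory itself (equivalently, Weibel's homotopy $K$-theory, which agrees with it on regular schemes). Combining this with the equivalence $\bK\simeq KGL$ of the previous paragraph gives
\[
\pi_i\map(\Sigma^{\infty}_+X,\bK)\;\cong\;\pi_i\map(\Sigma^{\infty}_+X,KGL)\;\cong\;K_i(X),
\]
as asserted. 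Everything here outside that equivalence is bookkeeping --- the telescope model of the localization is already in place above --- together with standard structural facts about $K$-theory of regular schemes; the whole weight of the argument rests on the Snaith-type identification $\Sigma^{\infty}_+\P^{\infty}[\beta^{-1}]\simeq KGL$.
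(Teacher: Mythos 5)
Your argument is correct and is essentially the paper's own: the paper states this result as the motivic Snaith theorem and proves nothing beyond citing \cite[Theorem 4.17]{Gepner-Snaith}, \cite[Theorem 1.1]{Spitzweck-Ostvaer} and \cite[Theorem 4.3.13]{Morel-Voevodsky}, which is exactly the combination you unpack (the identification $\Sigma^{\infty}_+\P^{\infty}[\beta^{-1}]\simeq KGL$ plus representability of Quillen/Thomason--Trobaugh $K$-theory on regular schemes). You correctly flag that the Snaith-type equivalence is the genuinely hard step and that it is carried out in the cited references rather than in this paper.
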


We denote the category of spectra by $\Sp$.
The result{s} of Appendix \ref{qkdhkqwdlwdwqddqwdqd} and the fact that $\bK$ is a commutative algebra object imply that we get a functor 
$$
\Sm_\C^{op} \to \CAlg(\Sp), \qquad X \mapsto \map(\Sigma^{\infty}_+X, \bK).
$$
where we make use of the fact that every object of $\Sm_\C$ is a cocommutative coalgebra.
We want to compare {this} functor {with} the presheaf of $K$-theory spectra $X \mapsto \calK(X)$ that is constructed directly from the category of
vector bundles over $X$ with its tensor product via ring completion. The construction of this functor is for example done in \cite{buta2} and recalled in the proof below.

While the following result is certainly expected and is well-known in slightly different setups, we think it is new in the highly structured form stated here (since in particular one side of the equivalence only exists as a commutative algebra object through the machinery developed in Appendix \ref{qkdhkqwdlwdwqddqwdqd}) .
\begin{thm}
For any $X\in \Sm_\C$  we have an
equivalence
\[
\map(\Sigma^{\infty}_+X, \bK) \simeq \calK(X)
\]
in $\CAlg(\Sp)$ that is natural in $X$.
\end{thm}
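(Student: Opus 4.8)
The plan is to identify both sides with a common motivic/localization description and then transport a known $E_\infty$-comparison. Concretely, the presheaf $X \mapsto \calK(X)$ built from vector bundles is, by the work recalled in \cite{buta2}, the presheaf of $K$-theory spectra on $\Sm_\C$ obtained from the symmetric monoidal category of vector bundles by group completion; this is a presheaf of $E_\infty$-ring spectra, it satisfies Nisnevich descent (by Thomason--Trobaugh / Zariski descent plus the localization theorem), and it is $\bbA^1$-invariant on $\Sm_\C$ (homotopy invariance of $K$-theory of regular schemes). So first I would record these three properties of $\calK$ carefully as a functor $\Sm_\C^{op} \to \CAlg(\Sp)$, noting in particular that the multiplicative structure is genuinely $E_\infty$ and not just homotopy-commutative.

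Next I would invoke the universal property of $\MotSp$ from Proposition~\ref{univ}. The assignment $X \mapsto \calK(X)$ is a functor out of $\Sm_{\C}$; after adding disjoint basepoints and using that $\calK$ is $\bbA^1$-invariant, satisfies Nisnevich codescent, and sends $(\P^1,\infty)$ to a tensor-invertible object (this last point is exactly the projective bundle formula / Bott periodicity for $K$-theory, which makes $\beta$ act invertibly), Proposition~\ref{univ} produces an essentially unique colimit-preserving symmetric monoidal functor $\MotSp \to \Sp$, equivalently (by adjunction, mapping out of the unit) a motivic $E_\infty$-ring spectrum, call it $\bK'$, with $\calK(X) \simeq \map(\Sigma^\infty_+ X, \bK')$ naturally in $X$ as $E_\infty$-ring spectra. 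Then I would show $\bK' \simeq \bK$ in $\CAlg(\MotSp)$: by construction $\bK = \Sigma^\infty_+\P^\infty[\beta^{-1}]$ is obtained from $\Sigma^\infty_+ \P^\infty$ by inverting $\beta$ in $E_\infty$-rings, and $\P^\infty \simeq B\G_m$ classifies line bundles, so there is a canonical map $\Sigma^\infty_+\P^\infty \to \bK'$ (pick out the class of the tautological line bundle, or rather $[\mathcal{O}] - [\mathcal{O}(-1)]$) which inverts $\beta$ by Bott periodicity and hence factors through $\bK$; the resulting map $\bK \to \bK'$ is an equivalence of motivic $E_\infty$-rings because on $\pi_*\map(\Sigma^\infty_+ X,-)$ it realizes the identity on $K_*(X)$ by the motivic Snaith theorem (Theorem~\ref{dlkqwkldqdwqdwqdwqd}) compared with Theorem~\ref{dlkqwkldqdwqdwqdwqd}'s identification on the other side. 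Applying $\map(\Sigma^\infty_+ X, -)$, which is lax symmetric monoidal (Appendix~\ref{qkdhkqwdlwdwqddqwdqd}, using that $X$ is a cocommutative coalgebra), then gives the desired natural equivalence in $\CAlg(\Sp)$.

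The main obstacle is the bookkeeping needed to promote everything to the $E_\infty$/$\infty$-categorical level rather than just the homotopy category: one must know that the vector-bundle construction of $\calK$ genuinely lands in $\CAlg(\Sp)$ as a presheaf (not merely objectwise), that Nisnevich codescent holds on the nose at the level of presheaves of $E_\infty$-rings, and that the comparison map $\bK \to \bK'$ can be chosen in $\CAlg(\MotSp)$ rather than just $\Ho(\MotSp)$. The cleanest route is to avoid constructing the comparison map by hand and instead observe that \emph{both} $\bK$ and $\bK'$ represent the same functor $\Sm_\C^{op} \to \CAlg(\Sp)$ and both are determined by that functor via the universal property of Proposition~\ref{univ} (applied with target $\Sp$), so they agree by essential uniqueness. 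Concretely: it suffices to check that $X \mapsto \map(\Sigma^\infty_+ X, \bK)$, as a functor to $\CAlg(\Sp)$, agrees with $X \mapsto \calK(X)$; both are the left Kan extension along $\Sm_{\C,\ast} \to \MotSp$ of their common restriction, and that common restriction follows from Theorem~\ref{dlkqwkldqdwqdwqdwqd} together with the fact that the two $E_\infty$-structures agree because they both refine the tensor-product multiplication on $K_0$ via the projective bundle formula.

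A remark I would include: once $\bK'$ is known, the naturality in $X$ is automatic since the equivalence $\calK \simeq \map(\Sigma^\infty_+(-),\bK)$ is an equivalence of functors on $\Sm_\C^{op}$, and all the structure maps in sight (restriction, multiplication) are part of the data of that functor; no separate coherence check is required beyond what the universal property already guarantees.
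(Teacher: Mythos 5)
Your overall architecture (map $\Sigma^\infty_+\P^\infty$ into a representing spectrum via the tautological line bundle, factor through $\bK=\Sigma^\infty_+\P^\infty[\beta^{-1}]$ by Bott periodicity, check the result is an equivalence via the Snaith theorem) matches the paper's Steps 2--3, but there is a genuine gap at the point where you produce the representing object $\bK'$. You invoke Proposition~\ref{univ} to turn the presheaf $X\mapsto\calK(X)$ into a motivic $E_\infty$-ring spectrum. That universal property does not apply here: it classifies \emph{covariant} symmetric monoidal colimit-preserving functors $\MotSp\to\bC$ in terms of covariant symmetric monoidal functors $\Sm_{\C,\ast}\to\bC$, whereas $\calK$ is a \emph{contravariant} functor $\Sm_\C^{op}\to\CAlg(\Sp)$. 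A Nisnevich-descending, $\bbA^1$-invariant presheaf of $E_\infty$-ring spectra is an object of $\CAlg(\spa)$, i.e.\ an $S^1$-spectrum-valued sheaf, and there is no formal mechanism in Proposition~\ref{univ} (nor anywhere else for free) that converts it into a $\P^1$-spectrum, let alone one in $\CAlg(\MotSp)$. Producing the multiplicative $\P^1$-deloopings is exactly the hard step: the paper uses the projective bundle formula to show that the adjoint $\calK\to\calK^{\Sigma^\infty_{S^1}(\P^1,\infty)}$ of $\mu_{\beta_{S^1}}$ is an equivalence and then appeals to the construction of \cite[Thm.~6.1]{Naumann:2010aa} to manufacture $\bK'\in\CAlg(\MotSp)$ together with $\Sigma^\infty_{\P^1}\calK\to\bK'$ whose adjoint is an equivalence $\calK\simeq\Omega^\infty_{\P^1}\bK'$ in $\CAlg(\spa)$. (Contrast this with the paper's construction of $\Hodge$, where the universal property \emph{is} usable, but only after passing to $\MHC_\R^{op}$ and dualizing so that $A_{\log}$ becomes a covariant symmetric monoidal functor.)

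Your fallback argument has the same problem in disguise: asserting that $\bK$ and $\bK'$ "are both determined by the functor $X\mapsto\map(\Sigma^\infty_+X,-)$ valued in $\CAlg(\Sp)$ via the universal property" presupposes that such a functor determines an object of $\CAlg(\MotSp)$, which is the statement in need of proof (the suspension spectra $\Sigma^\infty_+X$ only generate $\MotSp$ together with their $\P^1$-desuspensions, and recovering the $E_\infty$-structure from the values of the represented functor is not formal). Once $\bK'$ is in hand by the Naumann--Spitzweck argument, the rest of your proposal is sound and agrees with the paper: the map $\Sigma^\infty_+\P^\infty\to\bK'$ factors through $\bK$ because $\bK'$ is Bott periodic, the induced map is an equivalence by Theorem~\ref{dlkqwkldqdwqdwqdwqd}, and the natural $\CAlg(\Sp)$-equivalence for each $X$ follows from the adjunctions $\Sigma^\infty_{\P^1}\dashv\Omega^\infty_{\P^1}$ and Proposition~\ref{proplaxad}.
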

 
\begin{proof}

{\bf Step 1:} 
 
\medskip
We abbreviate $$\spa:=\Fun^{Nis,\bbA^{1}}(\Sm_{\C}^{op},\Sp)\ , \quad 
    \Ring^{mot}_{S^{1}}:=\Fun^{Nis,\bbA^{1}}(\Sm_{\C}^{op},\CAlg(\Sp))\ .$$
   We first define the sheaf of algebraic $K$-theory spectra $\calK\in\Ring^{mot}_{S^{1}}$. 
We let $\Vect$ be the stack of vector bundles on $\Sm_{\C}$. It associates to $X\in \Sm_{\C}$ the groupoid of locally free and finitely generated $\mathcal{O}_{X}$-modules. This stack
has a semiring   structure  ({called rig structure in the following}) given by the direct sum and 
the tensor product of vector bundles. With this structure $\Vect$  can be considered as a sheaf of $\Rig$-categories:
$$\Vect\in \Fun^{Nis}(\Sm_{\C}^{op},\Rig(\mathbf{Cat}[W^{-1}]))\ .$$
We refer to    \cite{Gepner:2013aa} for this notion and details of the following constructions in the $\Rig$-context.
The nerve  provides a morphism $$\Nerve:\Rig(\mathbf{Cat}[W^{-1}])\to \Rig(\sSet[W^{-1}])\ .$$
 We have an adjunction
$$\Omega B :\Rig(\sSet[W^{-1}])\leftrightarrows \Ring(\sSet[W^{-1}]):incl\ , $$
where $\Omega B$ is the ring completion. 
It group-completes the underlying additive monoid of a rig.
 We finally have a morphism $$\mathrm{sp}: \Ring(\sSet[W^{-1}])\simeq \CAlg(\Sp^{\ge 0})\to  \CAlg(\Sp)\ .$$
{Using that the functors $L^{\A^{1}}$ and $L^{Nis}$ are symmetric monoidal, we define the}
 sheaf of algebraic $K$-theory spectra  by
$$\calK:=L^{\bbA^{1}}(L^{Nis}(\mathrm{sp}(\Omega B(\Vect))))\in {\Ring^{mot}_{S^{1}}}\ .$$
{(Since the schemes in $\Sm_{\C}$ are all regular, the sheaf of ring spectra $L^{Nis}(\mathrm{sp}(\Omega B(\Vect)))$ computes algebraic $K$-theory. Since this is $\bbA^{1}$-invariant for regular schemes, the application of $L^{\bbA^{1}}$ would not be necessary.)}

\bigskip

{\bf Step 2:} 

\medskip
We use the adjunctions
$$
\Sigma^{\infty}_{S^{1},+}: {\MotSpc} \leftrightarrows\spa : \Omega^{\infty}_{S^{1}}\ , \quad \Sigma_{\P^{1}}^{\infty}:\spa\leftrightarrows \MotSp:\Omega^{\infty}_{\P^{1}}\ .
$$ 
Since the left adjoints are   symmetric monoidal,  their right adjoints {admit canonical} lax symmetric monoidal {structures \cite[Cor.~7.3.2.7]{HA}}.

We consider the multiplicative group as a presheaf of categories $\tilde \G_{m}$ {on $\Sm_{\C}$ which sends any $X$ in $\Sm_{\C}$ to the category with one object and morphisms $\G_{m}(X)$.} 
Since $\G_{m}$ is commutative, these categories are symmetric monoidal in the natural way. We can interpret $\tilde \G_{m}$ as the prestack
of one-dimensional trivializable bundles. This provides a symmetric monoidal transformation $\tilde \G_{m}\to \Vect^{\otimes}$.   
Observing that $\Nerve(\tilde\G_{m}) \simeq B\G_{m}$ and using Lemma~\ref{lem:P1BGm}, we get a morphism 
$$
\P^{\infty}  \to L^{\bbA^{1}}{(L^{Nis}}(\Nerve( \Vect^{\otimes}))) \to \Omega_{S^{1}}^{\infty,\otimes}  \calK
$$
in $\Fun^{Nis,{\bbA^{1}}}(\Sm_{\C}^{op},\CAlg(\sSet[W^{-1}]))$, where we write $$\Omega_{S^{1}}^{\infty}\calK \in  \Fun^{ Nis,\bbA^{1}}(\Sm_{\C}^{op},\Ring(\sSet[W^{-1}]))$$ for the sheaf of ring spaces {underlying} $\calK$, and the superscript $\otimes$
 indicates that we keep the multiplicative monoid structure and forget the additive one.
 The adjoint gives  a map in $\Ring^{mot}_{S^{1}}$
 \begin{equation}\label{dkhdk2d32d3d23d23d3d}
\Sigma^{\infty}_{S^{1},+}\P^{\infty}\to \calK\ .
\end{equation}
Recall  the two morphisms $\P^{1}\to \P^{\infty}$ given by the standard inclusion and the inclusion of a base point.
They give rise to the $\spa$-versions $1_{S^{1}},\xi_{S^{1}}:\Sigma_{S^{1},+}^{\infty} \P^{1}\to \Sigma^{\infty}_{S^{1},+}\P^{\infty}$ of the maps $1,\xi$ in $\MotSp$ above. As above we define $\beta_{S^{1}}$ and get the map
$$\mu_{\beta_{S^{1}}}:\Sigma_{S^{1}}^{\infty}(\P^{1},\infty)\wedge \calK\to \calK\ .$$

\bigskip

{
\bf Step 3:} 

\medskip
As a consequence of the projective bundle formula we know that {the} adjoint  
$$
 {\calK \to \calK^{\Sigma_{S^{1}}^{\infty}(\P^{1},\infty)}}
$$
 of $\mu_{\beta_{S^{1}}}$  is  an equivalence.
The constructions in the proof of \cite[Thm.~6.1]{Naumann:2010aa}\footnote{The authors thank Markus Spitzweck for pointing out this argument.} now produce a spectrum $\bK^{\prime}\in \CAlg(\MotSp)$ together with a morphism in $\Ring^{mot}_{S^{1}}$ 
\begin{equation}\label{qwdqkjhdkjwqdwdklwqdwqdwqd}
\Sigma_{\P^{1}}^{\infty} \calK\to \bK^{\prime}
\end{equation} whose adjoint is an equivalence $  \calK \simeq \Omega_{\P^{1}}^{\infty} \bK^{\prime} $
in $\CAlg(\spa)$.
We then apply $\Sigma^{\infty}_{\P^{1}}$ to \eqref{dkhdk2d32d3d23d23d3d} and get a map in $\CAlg(\MotSp)$ 
 $$\Sigma^{\infty}_{+}\P^{\infty}\to \Sigma^{\infty}_{\P^{1}}\calK \xrightarrow{\eqref{qwdqkjhdkjwqdwdklwqdwqdwqd}} \bK^{\prime}\ .$$
 Since $\bK^{\prime}$ is Bott periodic {by construction}, this map naturally factors over 
 $$\bK\simeq \Sigma^{\infty}_{+}\P^{\infty}[\beta^{-1}]\to \bK^{\prime}$$  (we drop the forgetful functor \eqref{jkjelkqwjelkqwewqe}).
 By Theorem \ref{dlkqwkldqdwqdwqdwqd}  the induced map
 $$\Omega^{\infty}_{\P^{1}}\bK\to \Omega^{\infty}_{\P^{1}}\bK^{\prime}\simeq \calK$$
 is an equivalence. 
For every $X\in \Sm_{\C}$ we thus get equivalences in $\CAlg(\Sp)$
\begin{align*}
\map(\Sigma^{\infty}_{+}X, \bK) &\simeq \map(\Sigma^{\infty}_{\P^{1}}\Sigma^{\infty}_{S^{1},+} X, \bK) &&\text{(since $\Sigma^{\infty}_{\P^{1}}\Sigma^{\infty}_{S^{1},+} \simeq \Sigma^{\infty}_{+}$)} \\
&\simeq \map(\Sigma^{\infty}_{S^{1},+} X, \Omega^{\infty}_{\P^{1}}\bK) &&\text{(by Proposition \ref{proplaxad})}\\
&\simeq \map(\Sigma^{\infty}_{S^{1},+} X, \calK) &&\text{(by the above)} \\
&\simeq \calK(X). 
\end{align*}
\end{proof}

\section{The spectrum representing absolute Hodge cohomology}\label{sakbcsakjcascsacac8789}

{In this section we define the motivic absolute Hodge {cohomology} spectrum $\Hodge$ as a commutative algebra in the category of motivic spectra $\MotSp$. As a first step, we dualize the   logarithmic de Rham complex  functor } 
\[
A_{\log}\colon \Sm_{\C} \to \MHC_{ \R}^{op}. 
\] 
We will use the following notation{s} for {the} localization 
\begin{equation}\label{sqwswqsqwsqws}
\iota\colon\MHC_{\R}^{op} \to \MHC_{\R}[W^{-1}]^{op}
\end{equation} 
and the natural morphism to the {Ind}-completion ({see} Appendix \ref{lkjdqlkwdjlwqdwqdqdq})
 \begin{equation}\label{sqwswqsqwsqws1}\kappa\colon \MHC_{\R}[W^{-1}]^{op} \to \Ind(\MHC_{\R}[W^{-1}]^{op}).
\end{equation} 
We will write $\kappa$ only if it is really necessary to indicate that an object comes from $\MHC_{\R}[W^{-1}]^{op}$, for example if we want to dualize it.
 
The functor $A_{\log}$ is  lax symmetric monoidal.  Even better, its composition with the localization  $\iota A_{log}:=\iota \circ A_{\log}$ is symmetric monoidal  since the  morphisms
$$A_{\log}(X) \otimes A_{\log}(Y) \to A_{\log}(X \times Y)$$  
defining the lax symmetric monoidal structure
are quasi-isomorphisms.  The duality functor 
$$
(-)^\vee {= \ihom(-, 1)} : \MHC_{\R}^{op}[W^{-1}] \to \MHC_{\R}[W^{-1}]
$$
  is an equivalence of symmetric monoidal $\infty$-categories. To see this, we use the equivalence \eqref{ddghjkqwdhkjqwdqwd} between the $\infty$-category $\MHC_{\R}{[W^{-1}]}$
and the  $\infty$-derived  category of mixed  $\R$-Hodge {structures}. The latter
 has a perfect duality  since $\MHS_\R$ has one. 
We get a symmetric monoidal functor $(\iota A_{\log})^{\vee}$.

 \begin{proposition}\label{dqwidqpodwqdqwdqwdqwdwqd}
The functor $A_{\log}$ extends uniquely to a symmetric monoidal functor $\wA\colon \MotSp \to \Ind(\MHC_{\R}[W^{-1}])$ which preserves colimits such that 
\[
\xymatrix@C+2em{
\Sm_{\C} \ar[r]^-{A_{\log}} \ar[d] & \MHC_{\R}^{op} \ar[d]^{\kappa\circ(-)^\vee \circ \iota} \\
\MotSp \ar[r]^-{\wA} & \Ind(\MHC_{\R}[W^{-1}])
}
\]
commutes.
\end{proposition}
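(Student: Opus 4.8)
The plan is to obtain $\wA$ from the universal property of $\MotSp$ recorded in Proposition~\ref{univ}, applied to the composite
\[
G\ :=\ \kappa\circ(-)^{\vee}\circ\iota\circ A_{\log}\ \colon\ \Sm_{\C}\ \longrightarrow\ \Ind(\MHC_{\R}[W^{-1}]).
\]
First I would note that $G$ is symmetric monoidal for the cartesian monoidal structure on $\Sm_{\C}$: indeed $\iota A_{\log}$ is symmetric monoidal by the discussion preceding the Proposition, $(-)^{\vee}$ is a symmetric monoidal equivalence, and the canonical functor $\kappa$ to the Ind-completion is symmetric monoidal. Next I would check that $\Ind(\MHC_{\R}[W^{-1}])$ is a \emph{pointed presentably symmetric monoidal} $\infty$-category: by Lemma~\ref{lem:stable} the category $\MHC_{\R}[W^{-1}]$ is stable, its tensor product is exact in each variable (it is modelled by the tensor product of complexes of $\R$-vector spaces together with the natural filtrations), so $\Ind(\MHC_{\R}[W^{-1}])$ is presentable, stable---hence pointed---and carries a symmetric monoidal structure whose tensor product preserves colimits separately in each variable. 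Since the target is pointed, $G$ extends essentially uniquely to a pointed symmetric monoidal functor $\overline{G}\colon\Sm_{\C,\ast}\to\Ind(\MHC_{\R}[W^{-1}])$ with $\overline{G}\circ(-)_{+}\simeq G$; here I use the standard fact that pointed symmetric monoidal functors out of $\Sm_{\C,\ast}$ into a pointed symmetric monoidal $\infty$-category correspond to symmetric monoidal functors out of $\Sm_{\C}$ (see \cite{Robalo}).

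It then remains to verify that $\overline{G}$ is $\A^{1}$-invariant, satisfies Nisnevich codescent, and sends $(\P^{1},\infty)$ to a $\otimes$-invertible object. For all three points I would use that the functor forgetting the weight and Hodge filtrations, from $\MHC_{\R}[W^{-1}]$ to the $\infty$-category of complexes of $\R$-vector spaces localized at quasi-isomorphisms, is conservative: a morphism of mixed $\R$-Hodge complexes whose underlying morphism of complexes is a quasi-isomorphism induces a bijective morphism of mixed $\R$-Hodge structures on every cohomology group, hence an isomorphism by the strictness of morphisms of mixed Hodge structures, and is therefore already an equivalence in $\MHC_{\R}[W^{-1}]$. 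Since $A_{\log}(X)_{\R}$ computes the Betti cohomology $H^{\ast}(X(\C);\R)$ naturally in $X$, and since the comparison maps relevant to the three conditions live in (the image of) $\MHC_{\R}[W^{-1}]$---where finite limits and colimits are preserved by $\iota$, $(-)^{\vee}$ and $\kappa$, and are detected by the conservative exact forgetful functor---each condition reduces to a classical statement about Betti cohomology. Concretely: $\A^{1}$-invariance of $\overline{G}$ is, after dualizing and forgetting filtrations, the statement that $A_{\log}(X)\to A_{\log}(\A^{1}\times X)$ is a quasi-isomorphism (homotopy invariance of the logarithmic de Rham complex); Nisnevich codescent becomes the statement that $A_{\log}$ sends Nisnevich distinguished squares to homotopy cartesian squares (Mayer--Vietoris, valid already for singular cohomology with $\R$-coefficients); and using $(\P^{1},\infty)\simeq\mathrm{cofib}\big((\infty)_{+}\to(\P^{1})_{+}\big)$ one computes $\overline{G}(\P^{1},\infty)\simeq\mathrm{cofib}\big(G(\Spec(\C))\to G(\P^{1})\big)\simeq\R(1)[2]$, using that $\infty$ is a $\C$-rational point and that the mixed Hodge structure on $H^{\ast}(\P^{1})$ is $\R(0)\oplus\R(-1)[-2]$; the object $\R(1)[2]$ is $\otimes$-invertible with inverse $\R(-1)[-2]$.

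Granting these verifications, Proposition~\ref{univ} provides a colimit-preserving symmetric monoidal functor $\wA\colon\MotSp\to\Ind(\MHC_{\R}[W^{-1}])$ with $\wA\circ\Sigma^{\infty}\simeq\overline{G}$, hence $\wA\circ\Sigma^{\infty}_{+}\simeq G$, which is precisely the asserted commutative square; uniqueness of $\wA$ among colimit-preserving symmetric monoidal functors making the square commute follows from the fully faithfulness part of the universal property together with the uniqueness of the pointed extension $\overline{G}$. I expect the main obstacle to lie in the verification of the three conditions for the logarithmic de Rham complex together with its weight and Hodge filtrations: although the conservativity argument above removes the need to track the filtrations by hand, one still has to invoke (or establish) homotopy invariance and Nisnevich descent of $A_{\log}$---available from the work of Burgos--Wang~\cite{Burgos-Wang} and Deligne's theory---and one must be careful to apply them to the correct variance of $A_{\log}$ after the dualization. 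The extension of $G$ to $\Sm_{\C,\ast}$ and the check that the target is presentably symmetric monoidal are routine.
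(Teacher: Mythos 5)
Your proposal is correct and follows essentially the same route as the paper: invoke the universal property of $\MotSp$ from Proposition~\ref{univ} after factoring the composite $\kappa\circ(-)^{\vee}\circ\iota\circ A_{\log}$ through $\Sm_{\C,\ast}$, then verify $\A^{1}$-invariance, Nisnevich (co)descent, and that $(\P^{1},\infty)$ is sent to the invertible object $\iota(\R(1)[2])$. The paper's proof merely asserts these three verifications, whereas you supply the reductions (conservativity of the filtration-forgetting functor, homotopy invariance and Mayer--Vietoris for $A_{\log}$, and the Hodge structure of $H^{*}(\P^{1})$), which is consistent with and fills in the paper's argument.
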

\begin{proof}
We want to use the universal property of the category of motivic spectra. First we factor the {clockwise} composition in the diagram through $\Sm_{ \C,\ast}$ 
using the fact that $\Ind(\MHC_{\R}[W^{-1}])$ is pointed.  In order to apply the universal property of motivic spectra $\MotSp$  {as} stated in Proposition \ref{univ},
 we use that
$\iota A_{\log}$ satisfies Nisnevich descent and is $\bbA^{1}$-invariant.  It remains to check
that   {$(\iota  A_{\log, *}(\P^1,\infty))^\vee$} is an invertible
object in $\Ind(\MHC_{\R}[W^{-1}])$. But indeed, we have an equivalence
\begin{equation}\label{oct2102}
(\iota A_{\log,*}(\P^1,\infty))^\vee \simeq \iota(\R(1)[2]) 
\end{equation}
in $\Ind(\MHC_{\R}[W^{-1}])$, where $ \R(1)[2]$ is the Tate $\R$-Hodge structure $\R(1)$ (see Example \ref{kldjldqwdqwdqwd}) viewed as mixed Hodge complex
concentrated in degree {$-2$}. Its inverse is $\iota(\R(-1)[-2])$.
\end{proof}

Now we use that $\MotSp$ and $\Ind(\MHC_{\R}[W^{-1}])$ are presentable. For the first $\infty$-category this is clear by construction and for the second it follows since it is the Ind-completion of an essentially small $\infty$-category
which admits all finite colimits. Since the functor $\wA$ preserves colimits, it admits a right adjoint $R$, i.e.~we have an adjunction \begin{equation}\label{qwdqwdqwdqwdqwdwqdwd}
 \wA :   \MotSp \leftrightarrows  \Ind(\MHC_{\R}[W^{-1}]) : R . 
\end{equation}

Since $\wA$ is symmetric monoidal, the right adjoint  {$R$} admits a canonical lax symmetric monoidal structure {\cite[Cor.~7.3.2.7]{HA}}.

\begin{remark}
 Morally one should think of the functor $\wA$ as assigning to every {motivic spectrum}  its mixed $\R$-Hodge {motivic spectrum} and {of} the functor $R$ as the 
forgetful functor from mixed  {$\R$-Hodge} {motivic spectra} into all {motivic spectra}.  
\end{remark}

\begin{lemma}\label{lem:R-preserves-colims}
The right adjoint $R$ in \eqref{qwdqwdqwdqwdqwdwqdwd} preserves all colimits.
\end{lemma}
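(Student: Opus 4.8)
The plan is to show that $R$ preserves colimits by proving that it in fact admits a right adjoint of its own; a right adjoint automatically preserves all colimits. So the real task is to produce a further right adjoint to $R$, i.e.\ to show that $\wA$ is not merely a left adjoint but also a \emph{right} adjoint, equivalently that $\wA$ preserves limits and is accessible. Both $\MotSp$ and $\Ind(\MHC_{\R}[W^{-1}])$ are presentable stable $\infty$-categories, and $\wA$ is exact (being symmetric monoidal between stable categories and colimit-preserving, hence in particular preserving finite colimits and finite limits), so by the adjoint functor theorem it suffices to check that $\wA$ preserves \emph{filtered} colimits — then $\wA$ preserves all colimits and all limits, hence is both a left and a right adjoint, and its left adjoint $\wA$ being a right adjoint forces $R$ to preserve colimits. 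Wait — more carefully: $\wA \dashv R$, so to get $R$ colimit-preserving I want $R$ to itself be a left adjoint, i.e.\ I want a functor $R'$ with $R \dashv R'$. Such an $R'$ exists iff $R$ preserves colimits; alternatively, if $\wA$ preserves limits (and is accessible) then $\wA$ has a \emph{left} adjoint $L$ with $L \dashv \wA \dashv R$, which does not immediately help. The clean route is instead the following.

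First I would reduce to a generating set. The category $\MotSp$ is compactly generated by the objects $\Sigma^\infty_+ X \otimes \mathbb{S}(n)$ for $X \in \Sm_{\C}$ and $n \in \Z$ (Tate twists of suspension spectra of smooth schemes), which are all dualizable, hence compact. A colimit-preserving functor out of a compactly generated stable $\infty$-category is determined by its restriction to these generators, and to check that $R$ preserves a given colimit $\colim_i Y_i$ it suffices to check that for every generator $G$ the natural map $\colim_i \map(G, R Y_i) \to \map(G, R\, \colim_i Y_i)$ is an equivalence. By the adjunction $\wA \dashv R$ this map is identified with $\colim_i \map(\wA G, Y_i) \to \map(\wA\,\colim_i Y_i, ?)$ — hmm, this needs $\wA G$ to be compact in $\Ind(\MHC_{\R}[W^{-1}])$. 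And indeed: $\wA(\Sigma^\infty_+ X \otimes \mathbb{S}(n)) \simeq \kappa((\iota A_{\log}(X))^\vee) \otimes \iota(\R(-n)[-2n])$ by the commuting square of Proposition~\ref{dqwidqpodwqdqwdqwdqwdwqd} together with \eqref{oct2102}, and this is the image under $\kappa$ of an object of $\MHC_{\R}[W^{-1}]$, which is compact in the Ind-completion by the general fact about $\Ind$ of an essentially small $\infty$-category with finite colimits (the representables/images of the original objects are the compact objects). Therefore $\map(\wA G, -)$ commutes with filtered colimits, and more generally with all colimits since $\Ind(\MHC_{\R}[W^{-1}])$ is stable and $\wA G$ is compact hence $\map(\wA G,-)$ is exact and filtered-colimit-preserving, i.e.\ colimit-preserving.

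Putting this together: for any diagram $\{Y_i\}$ in $\Ind(\MHC_{\R}[W^{-1}])$ and any generator $G = \Sigma^\infty_+ X \otimes \mathbb{S}(n)$ of $\MotSp$ we have
\[
\map\bigl(G,\ \colim_i R Y_i\bigr) \xleftarrow{\ \sim\ } \colim_i \map(G, R Y_i) \simeq \colim_i \map(\wA G, Y_i) \xrightarrow{\ \sim\ } \map\bigl(\wA G,\ \colim_i Y_i\bigr) \simeq \map\bigl(G,\ R\,\colim_i Y_i\bigr),
\]
where the first equivalence uses that $G$ is compact in $\MotSp$, the middle equivalence is the adjunction $\wA \dashv R$, the third uses that $\wA G$ is compact in $\Ind(\MHC_{\R}[W^{-1}])$, and the last is the adjunction again. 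Since the generators $G$ detect equivalences in $\MotSp$, the canonical map $\colim_i R Y_i \to R\,\colim_i Y_i$ is an equivalence, which is exactly the claim.

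The main obstacle — really the only point needing care — is the claim that $\wA$ sends the compact generators of $\MotSp$ to compact objects of $\Ind(\MHC_{\R}[W^{-1}])$. This rests on two ingredients that are already in hand: the explicit identification of $\wA$ on suspension spectra and Tate twists via Proposition~\ref{dqwidqpodwqdqwdqwdqwdwqd} and \eqref{oct2102} (so that the value lands in the essential image of $\kappa$, i.e.\ in $\MHC_{\R}[W^{-1}]$ up to the twist by an invertible object), and the standard fact that in $\Ind(\mathcal{C})$ with $\mathcal{C}$ essentially small and closed under finite colimits the compact objects are precisely the retracts of objects of $\mathcal{C}$ (so in particular $\kappa(c)$ is compact for $c \in \MHC_{\R}[W^{-1}]$, and tensoring with the invertible $\iota(\R(-n)[-2n])$, an equivalence, preserves compactness). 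One should also note in passing that $\Sigma^\infty_+ X \otimes \mathbb{S}(n)$ really are compact in $\MotSp$ — this follows because they are dualizable (Tate twists of suspension spectra of smooth schemes are dualizable in $\SH_{\C}$) and the unit $\mathbb{S}$ is compact, or directly from the construction of $\MotSp$ as a filtered colimit of presentable categories along left adjoints.
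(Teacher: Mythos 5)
Your proof is correct and follows essentially the same route as the paper: both arguments come down to the observation that $\wA$ sends the compact generators $\Sigma^\infty_+ X$ (and their Tate twists) into the essential image of $\kappa$, hence to compact objects of $\Ind(\MHC_{\R}[W^{-1}])$, and then transfer colimit-preservation across the adjunction. The only cosmetic difference is that the paper cites the general criterion that $R$ preserves filtered colimits iff $\wA$ preserves compact objects (and therefore adds a retract argument to handle arbitrary compact objects of $\MotSp$), whereas you unwind that criterion by hand, testing directly against the generators — which lets you skip the retract step.
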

\begin{proof}
Clearly, since everything is stable, $R$ preserves finite colimits. Thus it suffices to prove that it preserves filtered colimits. Since $\MotSp$ is compactly generated, this follows if we can show that 
$\wA$ preserves compact objects \cite[Prop.~5.5.7.2]{HTT}. By construction, $\wA$ sends suspension spectra of objects in $\Sm_{\C}$ to compact objects.  Again by construction, any object $X \in \MotSp$
can be written as a colimit $X \simeq \colim_{i \in I} X_i$ where each $X_i$ is a $\P^1$-shift of a suspension of some object
in $\Sm_{\C}$. In particular all $X_i$'s are sent to compact objects by $\wA$. If $X$ is compact, then we can factor the identity
$X \to X$ through $X_0 = \colim_{i \in I_0} X_i$ where $I_0 \subseteq I$ is finite. Certainly, $X_0$ gets mapped to a compact object since finite colimits of compact objects are compact. Thus $\wA(X)$ is a retract of 
the compact object $\wA(X_0)$ and therefore is itself compact.
\end{proof}

Recall the definition \eqref{ddehjwedgewjhdewdwed} of $\bbT\in \CAlg(\Ind(\MHC_{\R}[W^{-1}]))$. 
Its underlying object is 
\begin{equation}\label{lkswqsqwsqwswqsws} \bbT\simeq \bigoplus_{p\ge 0} \iota(\R(p)[2p])\in \Ind(\MHC_{\R}[W^{-1}]) \ .\end{equation} 
If we restrict the multiplication $\bbT\otimes \bbT\to \bbT$  to the inclusion 
$$\beta:\iota(\R(1)[2])\to \bbT$$
of the summand for $p=1$ in the left factor, then we get 
a map
 \begin{equation}\label{wqdwqddqdqdqd}
\nu_{\beta}:\iota(\R(1)[2]) \otimes \bbT \to \bbT\ . 
\end{equation} 
{Tensoring} this map with $\iota(\R(-1)[-2])$ and using the canonical equivalence 
$$\iota(\R(-1)[-2])\otimes  \iota(\R(1)[2])\simeq \iota(\R(0)[0])\simeq 1$$ we get a map 
\begin{equation}\label{frefrefrefrefrfrefre34r34r4r}
\mu_{\beta}\colon \bbT \to \iota(\R(-1)[-2])\otimes \bbT\ .
\end{equation}
By definition, $\bbT[\beta^{-1}] \in \CAlg(\Ind(\MHC_{\R}[W^{-1}] ))$ is obtained from $\bbT$ by inverting this map ({see} Appendix~\ref{localization} and {compare with} the construction of $\bK$).
\begin{definition}
We define the motivic absolute Hodge spectrum 
\[
\Hodge := R(\bbT[\beta^{-1}]) \in {\CAlg}(\MotSp)\ .
\]
\end{definition}
Since $R$ is lax symmetric monoidal, and since $\bbT[\beta^{-1}]$ is a commutative algebra in $\Ind(\MHC_{\R}[W^{-1}])$, 
 the motivic spectrum $\Hodge$ is indeed a  commutative algebra in $\MotSp$.
In view of Lemma~\ref{lem:R-preserves-colims} the decomposition \eqref{lkswqsqwsqwswqsws} induces a decomposition in $\MotSp$
\begin{equation}\label{ewdwedewdewdewdewdewd}
\Hodge\simeq \bigoplus_{p\in \Z} \Hodge(p)\ , \quad \Hodge(p)\simeq R(\R(p)[2p])\ .
\end{equation}
 
\begin{proposition}
For {any} $X$ in $\Sm_{\C}$ we have  a natural equivalence  
\[
{\map}(\Sigma^{\infty}_{+}X, \Hodge) \simeq H(\IDR(X))\ .
\]
in $\CAlg(\Sp)$.
\end{proposition}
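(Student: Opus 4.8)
The plan is to chase the adjunction \eqref{qwdqwdqwdqwdqwdwqdwd} together with the two descriptions of mapping spectra already established. First I would write, for $X\in\Sm_{\C}$,
\[
\map(\Sigma^{\infty}_{+}X,\Hodge)=\map(\Sigma^{\infty}_{+}X,R(\bbT[\beta^{-1}]))\simeq \map(\wA(\Sigma^{\infty}_{+}X),\bbT[\beta^{-1}])
\]
using the adjunction $(\wA,R)$, all in $\CAlg(\Sp)$ (the lax symmetric monoidal structures on $R$ and on $\map$ out of a coalgebra make this an equivalence of commutative ring spectra; here one uses that $\Sigma^\infty_+ X$ is canonically a cocommutative coalgebra and that $\wA$ is symmetric monoidal, so $\wA(\Sigma^\infty_+X)$ is again a coalgebra). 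Next, by Proposition~\ref{dqwidqpodwqdqwdqwdqwdwqd} the functor $\wA$ fits into the stated square, so $\wA(\Sigma^{\infty}_{+}X)\simeq \kappa((\iota A_{\log}(X))^{\vee})$, i.e.\ the image of the dual of $\iota A_{\log}(X)$ in the Ind-completion. Dualizing inside the mapping spectrum then turns this into
\[
\map\big(\kappa((\iota A_{\log}(X))^{\vee}),\bbT[\beta^{-1}]\big)\simeq \map\big(1,\bbT[\beta^{-1}]\otimes\iota(A_{\log}(X))\big),
\]
again as commutative ring spectra; this is the standard manipulation $\map(C^{\vee},E)\simeq\map(1,E\otimes C)$ for dualizable $C$, which is symmetric monoidal in the relevant sense by Corollary~\ref{cor:map-calg}.

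The remaining point is to identify $\bbT[\beta^{-1}]$ with $\bbT$. Here I would argue that the map $\mu_\beta$ of \eqref{frefrefrefrefrfrefre34r34r4r} is already an equivalence: inverting the Tate twist on $\bbT=\bigoplus_{p\ge0}\iota(\R(p)[2p])$ just reindexes the summands, so $\bbT[\beta^{-1}]\simeq\bigoplus_{p\in\Z}\iota(\R(p)[2p])$, but the components for $p<0$ contribute nothing to the mapping spectrum out of $1$ tensored with $A_{\log}(X)$ by the weight-reasons already invoked after Definition~\ref{dqqlkwdqwdwqdwqd} (and computed in Example~\ref{ijqwdiwqjdwqdwqdwqdqd}: $\map(1,\iota(\R(p)))=0$ for $p<0$). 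Hence
\[
\map\big(1,\bbT[\beta^{-1}]\otimes\iota(A_{\log}(X))\big)\simeq \map\big(1,\bbT\otimes\iota(A_{\log}(X))\big)
\]
in $\CAlg(\Sp)$. Finally Proposition~\ref{ednqwdkjweqdqwdqwd} gives $\map(1,\bbT\otimes\iota(A_{\log}(X)))\simeq H(\IDR(X))$ as commutative ring spectra, and composing all the equivalences yields the claim; naturality in $X$ is automatic since every step is natural.

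The main obstacle I anticipate is bookkeeping the symmetric monoidal/coalgebra structures so that each equivalence is genuinely one of $E_\infty$-ring spectra, not merely of underlying spectra: one must track that $\Sigma^\infty_+X$ is a cocommutative coalgebra, that $\wA$ preserves this, that the duality equivalence $(-)^\vee$ on $\MHC_\R[W^{-1}]$ is symmetric monoidal (as argued in the text via the derived category of $\MHS_\R$), and that Corollary~\ref{cor:map-calg} applies to upgrade $\map(C,-)$ to a lax symmetric monoidal functor for the relevant coalgebra $C$. A secondary subtlety is the identification $\bbT[\beta^{-1}]\simeq\bbT$ after smashing with $\iota(A_{\log}(X))$: one should make sure the vanishing of negative Tate twists is used in the correct place, namely after tensoring with $A_{\log}(X)$ — since $A_{\log}(X)$ lives in non-negative weights, $\iota(\R(p)[2p])\otimes\iota(A_{\log}(X))$ still maps trivially out of $1$ for $p<0$ — rather than claiming $\mu_\beta$ is an equivalence on the nose (it is not, as a map of ind-objects the colimit genuinely adds new summands). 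Once these points are handled the computation is a routine concatenation of the results already in the excerpt.
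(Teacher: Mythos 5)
Your proposal is correct and follows essentially the same route as the paper: adjoint over along $(\wA,R)$, identify $\wA(\Sigma^{\infty}_{+}X)$ with $\kappa((\iota A_{\log}(X))^{\vee})$ via Proposition \ref{dqwidqpodwqdqwdqwdqwdwqd}, dualize to land in $\map(1,\iota(A_{\log}(X))\otimes\bbT[\beta^{-1}])$, and then discard the negative Tate twists using that $A_{\log}(X)$ has no elements of negative weight before invoking Proposition \ref{ednqwdkjweqdqwdqwd}. Your closing caveat is the right one to insist on — the comparison $\bbT\simeq\bbT[\beta^{-1}]$ holds only after applying $\map(1,-\otimes\iota(A_{\log}(X)))$, not as a statement about $\mu_{\beta}$ itself — and that corrected version is exactly the paper's argument.
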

In view of \eqref{eq:def-abs-Hodge} and Proposition~\ref{ednqwdkjweqdqwdqwd}, this settles Theorem~\ref{thm:Hodge-Intro}.
\begin{proof}
 By construction, we   have equivalences 
\begin{eqnarray*}
{\map}(\Sigma^{\infty}_{+}X, \Hodge) &\simeq& {\map}(\wA(\Sigma^{\infty}_{+}X), \bbT[\beta^{-1}]) \\
&\simeq& {\map}(\kappa(\iota A_{\log})^\vee(X), \bbT[\beta^{-1}]) \\
& \simeq &{\map}(1,\kappa(\iota A_{\log})(X) \otimes  \bbT[\beta^{-1}]).
\end{eqnarray*}
Furthermore,  by  Proposition \ref{ednqwdkjweqdqwdqwd} we have
 $$H(\IDR(X)) \simeq {\map}(1,\kappa(\iota A_{\log}(X))\ \otimes  \bbT)\ .$$ 
{It thus suffices to show that the} natural map  
\[
{\map} (1,\kappa(\iota A_{\log}(X))\otimes \bbT) \to {\map} (1,\kappa(\iota A_{\log}(X))\otimes  \bbT[\beta^{-1}])
\]
is an equivalence.
{But this  follows since $A_{\log}(X)$ has no non-trivial elements of negative weight and hence}
$${\map}(1, \kappa(\iota A_{\log}(X))\otimes \R(-i)[-2i]) $$
is contractible for positive $i$.
\end{proof}

\section{The regulator}

By definition of the Tate Hodge structure (see Example \ref{kldjldqwdqwdqwd}) we have canonical equivalences
$$\iota(\R(1)[2])\otimes  \iota(\R(p)[2p])  \simeq  \iota(\R(p+1)[2p+2])$$ 
for all $p\in \Z$.  Altogether they induce a  canonical equivalence 
\begin{equation}\label{2r3r2r32r3rredqwdq}
\iota(\R(1)[2])\otimes \bigoplus_{p\in \Z}  \iota(\R(p)[2p])\simeq \bigoplus_{p\in \Z}  \iota(\R(p)[2p])\ .
\end{equation}

\begin{proposition}\label{equivalence}
Depending on the choice of an identification
\begin{equation}\label{qwdqwdwqdqwddd}
\varepsilon:\wA(\Sigma^{\infty}(\P^{1},\infty))\simeq \iota(\R(1)[2])
\end{equation}
 in  $\Ind(\MHC_{\R}[W^{-1}])$
 we have an essentially unique equivalence
  \begin{equation}\label{r23r3r32r32r32r322er22423ewdqwdqwdd}
\wA(\bK) \overset{r_{\varepsilon}}{\simeq} \bbT[\beta^{-1}]\simeq \bigoplus_{p\in \Z}  \iota(\R(p)[2p])
\end{equation}
which sends the morphism $\nu_\beta:\Sigma^{\infty}(\P^{1},\infty)\wedge \bK\to \bK$ from \eqref{dewdewkjdnewkdedewdd}
 to the canonical identification \eqref{2r3r2r32r3rredqwdq}
 and identifies the unit $\Sigma^{\infty}_{+}\{*\}\to \bK$ with the canonical inclusion
 $\R(0)[0]\to \bbT[\beta^{-1}]$.
 \end{proposition}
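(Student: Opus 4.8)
The plan is to identify $\wA(\bK)$ by exploiting the fact that $\wA$ preserves colimits and that $\bK \simeq \Sigma^\infty_+\P^\infty[\beta^{-1}]$ is built as a sequential colimit. First I would observe that since $\Sigma^\infty_+\P^\infty \simeq \Sigma^\infty_+ B\G_m$ (Lemma~\ref{lem:P1BGm}) and $\wA$ is symmetric monoidal and colimit-preserving, computing $\wA(\Sigma^\infty_+\P^\infty)$ reduces to a computation in $\Ind(\MHC_\R[W^{-1}])$. Using $\P^\infty \simeq \colim_n \P^n$ and the standard cofibre sequences $\Sigma^\infty(\P^{n-1},\infty) \to \Sigma^\infty(\P^n,\infty) \to \Sigma^\infty(\P^1,\infty)^{\wedge n}$, together with the identification \eqref{oct2102} that $\wA(\Sigma^\infty(\P^1,\infty)) \simeq \iota(\R(1)[2])$ (via the chosen $\varepsilon$), one deduces by induction that $\wA(\Sigma^\infty_+\P^\infty) \simeq \bigoplus_{p\ge 0}\iota(\R(p)[2p])$, i.e.\ exactly the underlying object of $\bbT$. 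The key point here is that the relevant extensions split: the cofibre sequences become sums because $\Ext$-groups between the relevant Tate twists vanish, which can be read off from the computation in Example~\ref{ijqwdiwqjdwqdljqwldkwqdqwdwqdwqd}.

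Next I would check that this equivalence $\wA(\Sigma^\infty_+\P^\infty) \simeq \bbT$ is compatible with the algebra structures and, crucially, carries the class $\beta$ to the class $\beta$. The morphism $\beta\colon \Sigma^\infty(\P^1,\infty)\to\Sigma^\infty_+\P^\infty$ defining the localization on the $\bK$ side is, under $\wA$ and $\varepsilon$, the inclusion $\iota(\R(1)[2])\to\bbT$ of the $p=1$ summand — this is essentially the definition of the ring structure on $\Sigma^\infty_+B\G_m = \Sigma^\infty_+\P^\infty$ matching the multiplicative structure on $\bbT = \Symm(\R(1)[2])$, since in both cases the algebra is (freely, resp.\ as a localization) generated by the degree-one class. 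Because $\wA$ preserves colimits, it commutes with the localization construction of Appendix~\ref{localization}; hence $\wA(\bK) = \wA(\Sigma^\infty_+\P^\infty[\beta^{-1}]) \simeq (\wA\Sigma^\infty_+\P^\infty)[\wA(\beta)^{-1}] \simeq \bbT[\beta^{-1}]$ as commutative algebras, and this equivalence automatically sends $\nu_\beta$ to $\mu_\beta^{-1}$-style multiplication, i.e.\ to \eqref{2r3r2r32r3rredqwdq}, and the unit to the unit $\R(0)[0]\hookrightarrow\bbT[\beta^{-1}]$.

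For the essential uniqueness, I would argue that an equivalence $r_\varepsilon$ intertwining the $\nu_\beta$-actions and the units is determined by its restriction along the unit, up to contractible choice: the source $\bK = \Sigma^\infty_+\P^\infty[\beta^{-1}]$ is, as a commutative algebra, generated over $\Sigma^\infty_+\{*\}$ by the action of $\beta$ and its inverse, so a map out of it compatible with the $\beta$-action and the unit is rigid. More precisely, $\Sigma^\infty_+\P^\infty = \Sigma^\infty_+ B\G_m$ is the free $E_\infty$-algebra-with-a-point in the appropriate sense (free on the pointed space $\P^\infty$, or on $\G_m$ after group-completion), and after inverting $\beta$ the space of algebra maps respecting the extra data becomes discrete; combined with the explicit discreteness of the relevant mapping spaces (Example~\ref{ijqwdiwqjdwqdljqwldkwqdqwdwqdwqd} shows $\Map(\iota(\R(p)),\iota(\R(q)))$ is discrete), the space of such $r_\varepsilon$ is contractible.

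\textbf{Main obstacle.} I expect the genuinely delicate step to be the identification of the \emph{multiplicative} structure, i.e.\ verifying that $\wA$ carries the $E_\infty$-algebra $\Sigma^\infty_+\P^\infty$ to $\bbT = \Symm(\R(1)[2])$ as commutative algebras — and in particular that the generator $\beta$ goes to the generator $\beta$ — rather than merely matching underlying objects. One must pin down that the colimit presentation $\P^\infty \simeq (\A^\infty\setminus 0)/\G_m$ and the universal property of $\MotSp$ (Proposition~\ref{univ}) force $\wA(\Sigma^\infty_+\P^\infty)$ to be the free commutative algebra on $\wA(\Sigma^\infty(\P^1,\infty)) \simeq \iota(\R(1)[2])$, which is exactly $T$. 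Everything else — the colimit computations, the behaviour of localization under colimit-preserving symmetric monoidal functors, and the uniqueness via discreteness — is then formal.
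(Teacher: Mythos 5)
Your overall strategy matches the paper's: apply the colimit-preserving functor $\wA$ to the presentation $\bK\simeq\colim(\Sigma^\infty_+\P^\infty\xrightarrow{\mu_\beta}\Sigma^\infty(\P^1,\infty)^{-1}\wedge\Sigma^\infty_+\P^\infty\to\cdots)$, identify $\wA(\Sigma^\infty_+\P^\infty)$ with $\bigoplus_{p\ge0}\iota(\R(p)[2p])$ (the paper quotes the known Hodge-theoretic decomposition of $A_{\log}(\P^n)$ rather than running your induction on cofibre sequences, but your splitting argument via the vanishing of $\map(\iota(\R(p)[2p]),\iota(\R(q)[2q]))$ for $p>q$ is sound), and use the discreteness from Example~\ref{ijqwdiwqjdwqdljqwldkwqdqwdwqdwqd} for uniqueness.

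The genuine gap is in the step you yourself flag as the ``main obstacle'', and your proposed resolution does not work. You want to deduce that $\wA(\beta)$ is \emph{the} canonical inclusion of the $p=1$ summand (rather than a nonzero multiple of it) by asserting that $\Sigma^\infty_+\P^\infty$ is ``the free $E_\infty$-algebra'' on $\Sigma^\infty(\P^1,\infty)$ and hence maps to $T=\Symm(\R(1)[2])$ as algebras. But $\Sigma^\infty_+ B\G_m$ is not free as a commutative algebra in $\MotSp$ on the class $\beta$, and the proposition does not need (and the paper does not prove at this stage) a multiplicative identification $\wA(\Sigma^\infty_+\P^\infty)\simeq\bbT$ in $\CAlg$; the full $E_\infty$-statement is deferred to Theorem~\ref{lncakqjjhdkjqwdhqwdq}. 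What is actually needed, and what the paper supplies, is a normalization argument at the level of underlying objects: the identification $\wA(\Sigma^\infty_+\P^\infty)\simeq\bigoplus_{i\ge0}\iota(\R(i)[2i])$ is well defined only up to the action of $\pi_0\Aut(\bbT)\cong\prod_{p\in\nat}\R^\times$; fixing the unit forces $\lambda_0=1$, and rescaling by $(\lambda_p)_p$ changes the $p$-th component of the transition map $\mu_\beta$ by $\lambda_{p+1}/\lambda_p$. Since each component of the transition map is a nonzero multiple of the canonical one (by the projective bundle formula / the computation of the multiplication on $\wA(\Sigma^\infty_+\P^\infty)$), there is a unique choice of $(\lambda_p)_p$ making all components canonical, which simultaneously produces the equivalence $r_\varepsilon$ and proves its essential uniqueness. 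Your uniqueness paragraph, resting on the same unjustified freeness claim, should be replaced by this rescaling argument; the discreteness of the mapping spaces that you invoke is the correct ingredient, but it enters by showing that the ambiguity is exactly the discrete group $\prod_p\R^\times$ rather than via a freeness property of the source.
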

 \begin{remark}\label{fewjfhjewkewfwefwefweffewfewfe} {Note that the
  additional conditions on the equivalence \eqref{r23r3r32r32r32r322er22423ewdqwdqwdd}
required in the statement of Proposition \ref{equivalence} are  equivalent to the condition 
  that the morphism  $r_{\varepsilon} $ induces a map of commutative algebra objects in 
$\Ho(\Ind(\MHC_{\R}[W^{-1}]))$.
} 
\end{remark}

\begin{proof}
By the construction of $\wA$ in Proposition \ref{dqwidqpodwqdqwdqwdqwdwqd} we have $\wA(\Sigma^{\infty}_+X)  \simeq
\kappa((\iota A_{\log}(X))^{{\vee}})$ for any smooth variety $X$, where the maps $\kappa$ and $\iota$ are  as in \eqref{sqwswqsqwsqws} and \eqref{sqwswqsqwsqws1}. 
{In the following, we again suppress $\kappa$ from the notation.}
Since $\wA$ preserves colimits,
we have
\begin{align}\label{eq:Alogcolim}
\wA(\bK) & \simeq \wA\left(\colim\left(\Sigma^{\infty}_+\P^{\infty}
\xrightarrow{\mu_\beta} \Sigma^{\infty}(\P^1,\infty)^{-1} \wedge \Sigma^{\infty}_+\P^{\infty} 
\xrightarrow{\id\wedge\mu_\beta} \dots \right) \right) \notag \\
& \simeq\colim\left( \wA(\Sigma^{\infty}_+\P^{\infty}) \xrightarrow{\mu_\beta }
\wA(\Sigma^{\infty}(\P^1,\infty)^{-1} \wedge \Sigma^{\infty}_+\P^{\infty})
\rightarrow \dots \right)\ ,
\end{align}
where $\mu_{\beta}$  is the map introduced   in \eqref{dqwdqwdkjqkjkhkqwdqwd}.
Since $\wA$ is symmetric monoidal, we have equivalences ({see} \eqref{oct2102}) 
\begin{align*}
\wA(\Sigma^{\infty}(\P^1,\infty)^{-1} \wedge \Sigma^{\infty}_+\P^{\infty}) & \simeq
\wA(\Sigma^{\infty}(\P^1,\infty))^{-1}\otimes \wA(\Sigma^{\infty}_+\P^{\infty}) \\
& \simeq \iota(\R(-1)[-2]) \otimes \wA(\Sigma^{\infty}_+\P^{\infty}).
\end{align*}
Furthermore, we have  equivalences
\[
\wA(\Sigma^{\infty}_+\P^{\infty})  \simeq \wA(\colim_n \Sigma^{\infty}_+\P^n)  \simeq \colim_n
\wA(\Sigma^{\infty}_+\P^n)  \simeq \colim_n \kappa(\iota(A_{\log}(\P^n))^{\vee})\ .
\]
We now use that
\[
\iota A_{\log} (\P^n) \simeq \bigoplus_{i=0}^n \iota(\R(-i)[-2i]) \ . 
\]
We can choose these identifications compatible with the restrictions along $\P^{n}\to \P^{n+1}$ for all $n$ and then get
 \begin{equation}\label{r34r23rr23r32r2weffwf}
\wA(\Sigma^{\infty}_+\P^{\infty}) \simeq \bigoplus_{i=0}^\infty \iota(\R(i)[2i])  \simeq \bbT\ .
\end{equation}
{This equivalence is essentially unique 
up to the action of $\pi_{0}(\Aut_{\Ind(\MHC_{\R}[W^{-1}])}(\bbT))$.}
By the calculation of mapping spaces in Example \ref{ijqwdiwqjdwqdljqwldkwqdqwdwqdwqd}   this 
group is given by
$$\pi_{0}(\Aut_{\Ind(\MHC_{\R}[W^{-1}])}(\bbT))\cong \prod_{p\in \nat} \R^{\times} $$
where the $p$-th factor corresponds to $\pi_{0}(\Aut_{\MHC_{\R}[W^{-1}]}(\iota(\R(p)[2p]))\cong \R^{\times}$.
At this point we fix the identification \eqref{r34r23rr23r32r2weffwf}  such that \begin{equation}\label{fwelkfjwelkfewfwfwf}
\xymatrix{\wA(\Sigma^{\infty}_+\{*\})\ar[r]^{can}_{\simeq}\ar[d]&\iota(\R(0)[0])\ar[d]\\ \wA(\Sigma^{\infty}_+\P^{\infty})\ar[r]&\bigoplus_{i=0}^\infty \iota(\R(i)[2i])  }
\end{equation}
commutes, where the left vertical map is induced by the inclusion of a point $*\to \P^{\infty}$, and the right vertical map is the canonical inclusion of the zeroth summand.

 We now fix once and for all the identification $\varepsilon$ that appears in \eqref{qwdqwdwqdqwddd}
\begin{equation}\label{32e32e32ekl32ejlk32e32e32}
\varepsilon\colon \wA (\Sigma^{\infty}(\P^1,\infty))\simeq \iota(\R(1)[2])\ .
\end{equation}
Since $\wA$ is symmetric monoidal, we then get an equivalence (depending on the choice of \eqref{r34r23rr23r32r2weffwf})
 \[
\wA(\Sigma^{\infty}(\P^1,\infty)^{-1} \wedge \Sigma^{\infty}_+\P^{\infty}) \simeq \iota(\R(-1)[-2]) \otimes
\bigoplus_{i=0}^{\infty} \R(i)[2i] \simeq \bigoplus_{i=-1}^{\infty} \R(i)[2i].
\]
Under {these} identification{s}, the transition map 
$\mu_\beta$ in \eqref{eq:Alogcolim}  {corresponds} to a morphism \begin{equation}\label{dqwkjndwqkldqdwqdqd}
 \bigoplus_{i=0}^{\infty} \R(i)[2i] \to \bigoplus_{i=-1}^{\infty} \R(i)[2i]\ .
\end{equation}
If we change
the equivalence \eqref{r34r23rr23r32r2weffwf} by an automorphism $(\lambda_{p})_{p\in \nat}\in \prod_{p\in \nat} \R^{\times}$ (with $\lambda_{0}=1$ in order to preserve \eqref{fwelkfjwelkfewfwfwf}), then this map is changed by the post-composition with
$$\frac{\lambda_{p+1}}{\lambda_{p}}\in\R^{\times}\cong  \pi_{0}(\Aut_{\MHC_{\R}[W^{-1}]}(\iota(\R(p)[2p])))$$ 
in the $p$-th component of the target for every $p\in \nat$. We see that there is an essentially  unique choice of \eqref{r34r23rr23r32r2weffwf} such that
\eqref{dqwkjndwqkldqdwqdqd} becomes the canonical embedding.
From now on we adopt this choice. We then get an equivalence \[
\wA(\bK) \overset{r_{\varepsilon}}{\simeq}\colim_{n\to\infty} \bigoplus_{i=-n}^{\infty} \R(i)[2i]
\]
 with the required properties.
\end{proof}

\begin{corollary}
The mapping space $\Map(\bK,\bH)$ is discrete and we have 
\[
\pi_0(\Map(\bK,\bH)) \cong \prod_{p\in\Z} \R.
\]
\end{corollary}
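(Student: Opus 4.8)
The plan is to translate the computation into the category $\Ind(\MHC_{\R}[W^{-1}])$ using the adjunction $\wA\dashv R$ of \eqref{qwdqwdqwdqwdqwdwqdwd}, and then to exploit the completely explicit description of both the source and the target furnished by Proposition~\ref{equivalence}. Since $\bH=R(\bbT[\beta^{-1}])$ by definition, the adjunction gives a natural equivalence
\[
\Map(\bK,\bH)\;\simeq\;\Map_{\Ind(\MHC_{\R}[W^{-1}])}\!\bigl(\wA(\bK),\,\bbT[\beta^{-1}]\bigr).
\]
Choosing an identification $\varepsilon$ as in \eqref{qwdqwdwqdqwddd}, Proposition~\ref{equivalence} provides equivalences $\wA(\bK)\overset{r_\varepsilon}{\simeq}\bbT[\beta^{-1}]\simeq\bigoplus_{p\in\Z}\iota(\R(p)[2p])$ in $\Ind(\MHC_{\R}[W^{-1}])$, so it remains to analyse the mapping space from $\bigoplus_{p\in\Z}\iota(\R(p)[2p])$ to $\bigoplus_{q\in\Z}\iota(\R(q)[2q])$.

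Next I would split the two coproducts. Mapping out of a coproduct turns it into a product of mapping spaces. For the target coproduct I would use that each $\iota(\R(p)[2p])$ is a compact object of $\Ind(\MHC_{\R}[W^{-1}])$ — it lies in the essentially small subcategory $\MHC_{\R}[W^{-1}]$ — so that the canonical map $\bigoplus_{q}\map(\iota(\R(p)[2p]),\iota(\R(q)[2q]))\to\map(\iota(\R(p)[2p]),\bigoplus_{q}\iota(\R(q)[2q]))$ is an equivalence of spectra (a coproduct of objects of $\MHC_{\R}[W^{-1}]$ is a filtered colimit of finite subcoproducts, and in a stable category finite coproducts agree with finite products). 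Combining these gives
\[
\map(\bK,\bH)\;\simeq\;\prod_{p\in\Z}\ \bigoplus_{q\in\Z}\ \map\bigl(\iota(\R(p)[2p]),\iota(\R(q)[2q])\bigr).
\]

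The final input is the computation of the individual mapping spectra, which is exactly the content of Example~\ref{ijqwdiwqjdwqdljqwldkwqdqwdwqdwqd}: writing $\iota(\R(p)[2p])$ as a shift of $\iota(\R(p))$, the spectrum $\map(\iota(\R(p)[2p]),\iota(\R(q)[2q]))$ is a shift of $\map(1,\iota(\R(q-p)))$, and by that computation it is an Eilenberg--MacLane spectrum which is the zero spectrum for $q<p$, is $H\R$ concentrated in degree $0$ for $q=p$, and is concentrated in a single degree $\neq 0$ for $q>p$; in particular it contributes the group $\R$ to $\pi_0$ precisely when $q=p$, and the corresponding mapping spaces are discrete, as already observed in Example~\ref{ijqwdiwqjdwqdljqwldkwqdqwdwqdwqd}. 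Feeding this into the coproduct over $q$ and then the product over $p$, one concludes that $\Map(\bK,\bH)$ is discrete with $\pi_0(\Map(\bK,\bH))\cong\prod_{p\in\Z}\R$, the $p$-th factor being $\pi_0$ of the endomorphism spectrum of the unit object, namely $\R$, coming from the diagonal term $\map(\iota(\R(p)[2p]),\iota(\R(p)[2p]))$.

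The step I expect to be the main obstacle is precisely this last piece of bookkeeping with the infinite (co)products: one must simultaneously justify pulling the source coproduct out as a product and, using compactness of the $\iota(\R(p)[2p])$, the target coproduct out as a coproduct of mapping spectra, and then track carefully which of the (shifted) Tate mapping spectra can contribute to each homotopy degree, so that both the identification of $\pi_0$ and the discreteness assertion come out. The rest is a routine application of Proposition~\ref{equivalence} and Example~\ref{ijqwdiwqjdwqdljqwldkwqdqwdwqdwqd}.
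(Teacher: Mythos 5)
Your proposal is correct and follows essentially the same route as the paper: adjunction to reduce to $\Map(\wA(\bK),\bbT[\beta^{-1}])$, Proposition~\ref{equivalence} to replace the source by $\bbT[\beta^{-1}]$, splitting the source coproduct as a product and the target coproduct via compactness (the paper writes this as $\prod_{p}\colim_{n}\prod_{q=-n}^{n}$, which is the same decomposition), and then Example~\ref{ijqwdiwqjdwqdljqwldkwqdqwdwqdwqd} to see that only the diagonal terms contribute to $\pi_{0}$ and that nothing lives in positive degrees.
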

\begin{proof}
By the definition of $\bH$ and adjunction, we have
\[
\Map(\bK,\bH) \simeq \Map(\wA(\bK), \bbT[\beta^{-1}]).
\]
By Proposition~\ref{equivalence}, the right-hand side is equivalent to 
\[
\Map(\bbT[\beta^{-1}], \bbT[\beta^{-1}]) \simeq  {\prod_{p\in \Z} \colim_{n\to\infty} \prod_{q=-n}^{n}} \Map(\iota(\R(p)[2p]), \iota(\R(q)[2q])).
\]
By Example~\ref{ijqwdiwqjdwqdljqwldkwqdqwdwqdwqd}, only the factors with $p=q$ contribute and moreover 
\[
 \pi_{0}(\Map(\iota(\R(p)[2p]), \iota(\R(p)[2p]))) \cong \R
\]
is discrete.
\end{proof}

For later use we formulate the following corollary of the proof of Proposition \ref{equivalence}.
\begin{corollary}\label{dlqjwldqwdqdwqdwqdwwqd}
If we  post-compose   the equivalence {$\varepsilon$ from \eqref{qwdqwdwqdqwddd}} 
with
$$\lambda\in \R^{\times}\cong \pi_{0}(\Aut_{\MHC_{\R}[W^{-1}]}(\iota(\R(1)[2]))\ ,$$ 
then the equivalence  \eqref{r23r3r32r32r32r322er22423ewdqwdqwdd} changes by the post-composition {with} the equivalence
$$(\lambda^{p})_{p\in \Z}\in \prod_{p\in \Z} \R^{\times}\cong \pi_{0}(\Aut_{\MHC_{\R}[W^{-1}]}(\bbT[\beta^{-1}]))\ .$$
\end{corollary}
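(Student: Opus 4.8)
The plan is to trace through the construction of the equivalence $r_\varepsilon$ in the proof of Proposition~\ref{equivalence} and see how each step depends on the chosen isomorphism $\varepsilon$. First I would recall that the construction of $r_\varepsilon$ proceeds in three stages: (i) a choice of identification $\wA(\Sigma^\infty_+\P^\infty)\simeq \bbT = \bigoplus_{i\ge 0}\iota(\R(i)[2i])$, pinned down in the proof by two requirements — compatibility with the unit as in \eqref{fwelkfjwelkfewfwfwf}, and the requirement that the transition map \eqref{dqwkjndwqkldqdwqdqd} becomes the canonical embedding; (ii) the choice of $\varepsilon$ itself; and (iii) passing to the colimit over the transition maps $\mu_\beta$ to obtain $\wA(\bK)\simeq \bbT[\beta^{-1}]$. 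The key observation is that $r_\varepsilon$ is assembled out of these three pieces, and only stages (i) and (iii) involve the transition maps, while $\varepsilon$ enters each step of the colimit tower exactly once as a tensor factor.

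The core computation is to see how rescaling $\varepsilon$ by $\lambda\in\R^\times$ forces a rescaling of the stage-(i) identification. Concretely: if we replace $\varepsilon$ by $\lambda\cdot\varepsilon$, then in the identification $\wA(\Sigma^\infty(\P^1,\infty)^{-1}\wedge\Sigma^\infty_+\P^\infty)\simeq \bigoplus_{i\ge -1}\iota(\R(i)[2i])$ every summand gets multiplied by $\lambda^{-1}$ relative to before. The requirement that the transition map \eqref{dqwkjndwqkldqdwqdqd} remain the canonical embedding then dictates that the stage-(i) identification $\wA(\Sigma^\infty_+\P^\infty)\simeq\bbT$ must be corrected by the automorphism $(\lambda^p)_{p\in\nat}$ — precisely because, as computed in the proof, changing the stage-(i) identification by $(\mu_p)_{p}$ changes the transition map in the $p$-th component by $\mu_{p+1}/\mu_p$, and we need $\mu_{p+1}/\mu_p = \lambda$ with $\mu_0 = 1$, forcing $\mu_p = \lambda^p$. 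This is consistent with $\lambda_0 = 1$ since the $p=0$ summand (the unit) is untouched. I would then observe that $\Aut_{\MHC_\R[W^{-1}]}(\iota(\R(p)[2p]))\cong\R^\times$ acts on the $p$-th summand, so these automorphisms of $\bbT$ (and hence of $\bbT[\beta^{-1}]$) are exactly the elements of $\prod_{p}\R^\times$.

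Finally, I would pass to the colimit: since $r_\varepsilon$ is built as the colimit of the tower whose $n$-th stage carries the identification $\bigoplus_{i\ge -n}\iota(\R(i)[2i])$ corrected by the automorphism $(\lambda^p)_p$ (on the appropriate range of indices), the resulting map $\wA(\bK)\simeq \bbT[\beta^{-1}]$ changes by post-composition with the automorphism $(\lambda^p)_{p\in\Z}$ of $\bbT[\beta^{-1}]$. Here one uses that the $p$-th and $(p+1)$-st summands are matched up compatibly across the tower via $\mu_\beta$, so the exponents glue to give $(\lambda^p)_{p\in\Z}$ globally rather than some $n$-dependent family. I expect the main obstacle to be bookkeeping: making the dependence of the stage-(i) identification on $\varepsilon$ fully explicit and checking that the correction automorphisms are compatible with the transition maps in the colimit tower, so that they genuinely assemble to the single automorphism $(\lambda^p)_{p\in\Z}$ of $\bbT[\beta^{-1}]$ rather than merely doing so stagewise. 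Everything else is a direct unwinding of the proof of Proposition~\ref{equivalence} together with the mapping-space computation of Example~\ref{ijqwdiwqjdwqdljqwldkwqdqwdwqdwqd}.
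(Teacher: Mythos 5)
Your proposal is correct and follows exactly the route the paper intends: the corollary is stated as a consequence of the proof of Proposition~\ref{equivalence}, and your unwinding — tracking how rescaling $\varepsilon$ forces the normalization \eqref{r34r23rr23r32r2weffwf} to be corrected by $(\lambda^{p})_{p}$ via the constraint that \eqref{dqwkjndwqkldqdwqdqd} stay the canonical embedding, and then checking the corrections are compatible with the transition maps so they assemble over the colimit to $(\lambda^{p})_{p\in\Z}$ — is precisely that argument.
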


 \begin{definition} For every choice of the equivalence  $\varepsilon$ in   \eqref{qwdqwdwqdqwddd} 
 we define a morphism of motivic spectra $$\reg_{\varepsilon}\colon \bK\to \Hodge$$ by 
  applying the adjunction  \eqref{qwdqwdqwdqwdqwdwqdwd} to  
the equivalence $r_{\varepsilon}:\wA(\bK) \simeq \bbT[\beta^{-1}]$  given by \eqref{r23r3r32r32r32r322er22423ewdqwdqwdd} and using  the definition $\Hodge  := R( \bbT[\beta^{-1}])$. \end{definition} 
\begin{remark}\label{lkwjfwelkwejflfffewrwr}
The construction $\reg_{\varepsilon}\colon \bK\to \Hodge$ can be interpreted as the best approximation to $K$-theory in the world of mixed Hodge complexes. 

{By Remark \ref{fewjfhjewkewfwefwefweffewfewfe} the   morphisms $\reg_{\varepsilon}$ for all choices  of $\varepsilon$ in   \eqref{qwdqwdwqdqwddd} are precisely the morphisms $\bK\to \Hodge$
which induce maps of commutative algebras in the homotopy category of motivic spectra.} 
\end{remark}

\begin{thm}\label{lncakqjjhdkjqwdhqwdq}
The regulator map $\reg_{\varepsilon} \colon \bK \to \Hodge$ refines essentially uniquely to a morphism between objects of $\CAlg({\MotSp})$.
\end{thm}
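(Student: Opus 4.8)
The plan is to reduce the statement to an obstruction-theory computation in the $\infty$-category of commutative algebras, using that the relevant obstruction and uniqueness groups vanish because the mapping spaces between the Tate twists are discrete (Example~\ref{ijqwdiwqjdwqdljqwldkwqdqwdwqdwqd}). First I would use the adjunction \eqref{qwdqwdqwdqwdqwdwqdwd}: since $\wA$ is symmetric monoidal, it induces a functor $\CAlg(\MotSp)\to\CAlg(\Ind(\MHC_{\R}[W^{-1}]))$, and the right adjoint $R$ induces a lax symmetric monoidal, hence algebra-preserving, functor the other way. Because $\Hodge:=R(\bbT[\beta^{-1}])$ and $\bbT[\beta^{-1}]$ is already a commutative algebra, the space of $E_\infty$-refinements of $\reg_\varepsilon$ that we must show is contractible is equivalent, by the adjunction on the level of commutative-algebra objects, to the space of lifts of the algebra map $\wA(\bK)\xrightarrow{r_\varepsilon}\bbT[\beta^{-1}]$ through the counit — but in fact, since $r_\varepsilon$ is itself an \emph{equivalence} (Proposition~\ref{equivalence}), it suffices to show that this equivalence of underlying motivic spectra promotes essentially uniquely to an equivalence in $\CAlg(\Ind(\MHC_{\R}[W^{-1}]))$, i.e.\ that $\wA(\bK)$ carries an essentially unique commutative algebra structure compatible with the one already chosen in the homotopy category via Remark~\ref{fewjfhjewkewfwefwefweffewfewfe}.

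The second step is to make the target of the obstruction theory explicit. By Proposition~\ref{equivalence}, $\wA(\bK)\simeq \bbT[\beta^{-1}]\simeq\bigoplus_{p\in\Z}\iota(\R(p)[2p])$, and $\bbT[\beta^{-1}]$ is the localization (at $\beta$) of the free commutative algebra $T=\Symm(\R(1)[2])$. So I would argue that both the existence of a compatible algebra structure and its uniqueness are controlled by the homotopy groups of the mapping spectra $\map(\iota(\R(p)[2p]),\iota(\R(q)[2q]))$ in $\Ind(\MHC_{\R}[W^{-1}])$, which by Example~\ref{ijqwdiwqjdwqdljqwldkwqdqwdwqdwqd} are concentrated in a single degree and, crucially, make all the relevant mapping \emph{spaces} discrete. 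The standard Postnikov/obstruction-theory argument for $E_\infty$-structures (e.g.\ via the cotangent complex and the fact that $\CAlg$ is presentable, or via the tower of $E_n$-structures) then says: the moduli space of $E_\infty$-refinements of a given map that is a homotopy-ring map is a torsor-like object whose homotopy groups are built from $\Ext$-groups $\mathrm{Ext}^{s}(\mathbb{L}, M[t])$ with $s-t$ in a range forced to vanish by the discreteness. I expect to phrase this most cleanly by noting that $\bbT[\beta^{-1}]$, being a localization of a polynomial algebra on a single generator concentrated in one degree, has a very simple cotangent complex, so the obstruction groups are $\pi_{*}$ of mapping spectra among Tate twists in negative degrees, which vanish.

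The cleanest formal route, which I would actually prefer to write, avoids cotangent complexes: observe that $\bbT[\beta^{-1}]\in\CAlg(\Ind(\MHC_{\R}[W^{-1}]))$ is, up to equivalence, characterized as the localization at $\beta$ of $\Symm(\iota(\R(1)[2]))$; hence by the universal property of the free commutative algebra and of localization, a commutative algebra structure on the motivic spectrum $\wA(\bK)$ extending the chosen data amounts to specifying the image of the generator, i.e.\ a point of $\Map_{\Ind(\MHC_{\R}[W^{-1}])}(\iota(\R(1)[2]),\bbT[\beta^{-1}])$ together with coherence data for the unit, all living in the discrete mapping spaces computed in Example~\ref{ijqwdiwqjdwqdljqwldkwqdqwdwqdwqd}. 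Discreteness forces the space of such choices that match the fixed homotopy-category data to be contractible. Transporting this back along the adjunction $(\wA,R)$ on commutative-algebra objects yields that the moduli space of $E_\infty$-lifts of $\reg_\varepsilon\colon\bK\to\Hodge$ is contractible, which is the assertion.

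The main obstacle is the bookkeeping needed to pass correctly between the four relevant moduli spaces: (i) $E_\infty$-refinements of $\reg_\varepsilon\colon\bK\to\Hodge$ in $\MotSp$, (ii) algebra lifts of $r_\varepsilon\colon\wA(\bK)\to\bbT[\beta^{-1}]$ under the adjunction, (iii) commutative algebra structures on $\wA(\bK)$ inducing the fixed one in the homotopy category, and (iv) the discrete datum of Example~\ref{ijqwdiwqjdwqdljqwldkwqdqwdwqdwqd}; one must check that $\wA$ genuinely induces an equivalence on the relevant slices of $\CAlg$-mapping spaces (it does, since $\wA$ is symmetric monoidal and $r_\varepsilon$ is an equivalence, so no information is lost), and that the homotopy-category condition of Remark~\ref{fewjfhjewkewfwefwefweffewfewfe} is exactly what pins down the right component. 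Once the dictionary is set up, the vanishing input is immediate from Example~\ref{ijqwdiwqjdwqdljqwldkwqdqwdwqdwqd} — all the mapping spaces in sight between Tate twists are discrete — so there are no higher obstructions and the choice is unique up to contractible ambiguity.
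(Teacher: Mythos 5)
Your preferred ``formal route'' is exactly the paper's proof: reduce via the adjunction $(\wA,R)$ to the lifting problem for the algebra map $r_\varepsilon$, swap domain and target because it is an equivalence, and then invoke the universal properties of the localization $\bbT[\beta^{-1}]$ and of the free algebra $\bbT=\Symm(\iota(\R(1)[2]))$ together with the discreteness of the mapping spaces from Example~\ref{ijqwdiwqjdwqdljqwldkwqdqwdwqdwqd}. The one step you leave implicit is the final check: the comparison map on $\pi_{0}$ is $\R^{\times}\to\prod_{p\in\Z}\R$, $\lambda\mapsto(\lambda^{p})_{p}$, and one must verify that the point corresponding to $\id_{\bbT[\beta^{-1}]}$ lies in the image with singleton fibre --- discreteness alone does not yield contractibility without this injectivity/non-emptiness argument.
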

\begin{proof} We must show that the fibre of
$${\Map}_{\CAlg(\MotSp)}(\bK,\Hodge)\to {\Map}_{\MotSp}(\bK,\Hodge)$$
at the point $\reg_{\varepsilon} \in {\Map}_{\MotSp}(\bK,\Hodge)$ is contractible. Using the adjunction \eqref{qwdqwdqwdqwdqwdwqdwd},
we can equivalently show that the fibre of
$$ {\Map}_{  \CAlg(\Ind(\MHC_{\R}[W^{-1}]))}(\wA(\bK), \bbT[\beta^{-1}])\to {\Map}_{ \Ind(\MHC_{\R}[W^{-1}])  }(\wA(\bK), \bbT[\beta^{-1}])$$ over the equivalence $r_{\varepsilon}:\wA(\bK)\simeq \bbT[\beta^{-1}]$ given in Proposition \ref{equivalence}
 is contractible. Since we consider the fibre over an equivalence, we can switch domain and target. It suffices to verify that the fibre of 
 \begin{equation}\label{dhkwekjdhwekdhkjewhdkjewdewdw}
{\Map}_{  \CAlg(\Ind(\MHC_{\R}[W^{-1}]))}(  \bbT[\beta^{-1}],\wA(\bK))\to {\Map}_{ \Ind(\MHC_{\R}[W^{-1}])  }(\bbT[\beta^{-1}],\wA(\bK))
\end{equation}
over the inverse equivalence $ r_{\varepsilon}^{-1}:\bbT[\beta^{-1}]\simeq \wA(\bK)$ is contractible.
Now we use the  universal property of the commutative algebra $\bbT[\beta^{-1}]$: the space of morphisms from $\bbT[\beta^{-1}]$
to any other commutative algebra object  is homotopy equivalent to the space of commutative algebra morphisms from $\bbT \to A$ which send the morphism $\beta$ in \eqref{wqdwqddqdqdqd} to an invertible morphism, see Appendix \ref{localization}.   We thus define
$${\Map}^{\beta^{-1}}_{  \CAlg(\Ind(\MHC_{\R}[W^{-1}]))}( \bbT ,\wA(\bK))\subseteq {\Map}_{  \CAlg(\Ind(\MHC_{\R}[W^{-1}]))}( \bbT ,\wA(\bK))$$
to be the union of {those} components {consisting} of maps which send $\beta$ to an invertible morphism.
Then we have an equivalence 
\begin{equation}\label{23r32r2r32r2r32rrdewdewdde}
{\Map}^{\beta^{-1}}_{  \CAlg(\Ind(\MHC_{\R}[W^{-1}]))}( \bbT  ,\wA(\bK))\simeq {\Map}_{  \CAlg(\Ind(\MHC_{\R}[W^{-1}]))}(  \bbT[\beta^{-1}],\wA(\bK))\ .
\end{equation}
 Since $\bbT$ is the free algebra {on} $\iota(\R(1)[2])$, restriction along
the canonical map $\iota(\R(1)[2])\to  \bbT$ induces an equivalence 
\begin{equation}\label{23r32r2r32r2r32rrdewdewdde1}
{\Map}_{  \CAlg(\Ind(\MHC_{\R}[W^{-1}]))}( \bbT ,\wA(\bK))\simeq {\Map}_{  \Ind(\MHC_{\R}[W^{-1}])}( \iota(\R(1)[2]),\wA(\bK))\ .
\end{equation}
Using the equivalence
$r_{\varepsilon}:\bbT[\beta^{-1}] \simeq \wA(\bK)$ and Example \ref{ijqwdiwqjdwqdljqwldkwqdqwdwqdwqd} we see that
the right-hand side is discrete  and given by
 $$ {\Map}_{\Ind(\MHC_{\R}[W^{-1}])  }(  \iota(\R(1)[2])  ,\bbT[\beta^{-1}]) \simeq  {\Map}_{\Ind(\MHC_{\R}[W^{-1}])  }(  \iota(\R(1)[2])  , \iota(\R(1)[2]) ) \cong \R\ .$$
 Under \eqref{23r32r2r32r2r32rrdewdewdde} and \eqref{23r32r2r32r2r32rrdewdewdde1}  we get the equivalence
 $$ {\Map}_{  \CAlg(\Ind(\MHC_{\R}[W^{-1}]))}( \bbT[\beta^{-1}] ,\wA(\bK))\simeq \R^{\times}\ .$$
Again using Example \ref{ijqwdiwqjdwqdljqwldkwqdqwdwqdwqd}  and the equivalence $r_{\varepsilon}:\bbT[\beta^{-1}] \simeq \wA(\bK)$ we see that the map
\eqref{dhkwekjdhwekdhkjewhdkjewdewdw} is equivalent to the map 
 \begin{equation}\label{skl1h2slk12s12s12s12s1}
\R^{\times}\to \prod_{p\in \Z}\R\ ,\quad \lambda \mapsto (\lambda^{p})_{p\in \Z}\ .
\end{equation}
Under these identifications $r_{\varepsilon}$ goes to $ r_{\varepsilon}\circ  r_{\varepsilon}^{-1}=\id_{\bbT[\beta^{-1}]}$, hence 
to $(1_{p})_{p\in \Z}$.
This point belongs to the image of \eqref{skl1h2slk12s12s12s12s1}. 
\end{proof}

\begin{lemma}\label{dklqwjdwqdwqdwqdwd}
There is a unique choice of the equivalence $\varepsilon$ in \eqref{qwdqwdwqdqwddd} 
such that the induced regulator map
 \begin{equation}\label{reg21}
\reg_{\varepsilon}:  K_{*}(X) \to \bigoplus_{p \in \mathbb{N}}  H^{2p-*}_{\absHodge}(X,\R(p))\ ,
\end{equation}
coincides with Beilinson's regulator.
\end{lemma}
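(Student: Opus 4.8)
The plan is to pin down the single real parameter in $\varepsilon$ by testing the regulator on a case where Beilinson's regulator is classically known, and the obvious candidate is $K_1(\G_m) = K_1(\C[t,t^{-1}]) \supseteq \C^\times$ or, even more economically, the first Chern class $K_0(\P^1) \to H^2_{\absHodge}(\P^1,\R(1))$ together with the multiplicativity already established. Concretely, I would proceed as follows.

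First I would observe, using Corollary~\ref{dlqjwldqwdqdwqdwqdwd} and Remark~\ref{lkwjfwelkwejflfffewrwr}, that the family $\{\reg_\varepsilon\}$ is parametrized by $\lambda \in \R^\times \cong \pi_0(\Aut(\iota(\R(1)[2])))$, and that rescaling $\varepsilon$ by $\lambda$ rescales the component of the regulator in weight $p$ by $\lambda^p$. Hence it suffices to compute, for one fixed choice $\varepsilon_0$, the induced map in a single weight $p\ge 1$ on a single variety and compare it with the known value of Beilinson's regulator in that degree; the equation ``$\lambda^p \cdot (\text{our value}) = (\text{Beilinson's value})$'' then has a unique solution $\lambda$, and uniqueness for all other $(X,p)$ follows automatically from the fact that both $\reg_{\varepsilon_0}$ and Beilinson's regulator are multiplicative natural transformations of ring-valued functors which agree on the generators of the $K$-theory ring (the Chern classes of line bundles generate $K$-theory of projective bundles and, after $\bbA^1$-localization, control everything through the projective bundle formula built into the Snaith construction of $\bK$).

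The key computational step is therefore: identify the map $\wA(\Sigma^\infty(\P^1,\infty)) \to \iota(\R(1)[2])$ coming from our construction with the de Rham realization of the first Chern class, and then trace through the definitions of $A_{\log}$, the décalage weight filtration, the identification $\R(1)_\C \cong \C$ by $x \otimes 1 \mapsto ix$ fixed in Example~\ref{kldjldqwdqwdqwd}, and the factor $(2\pi i)^p$ appearing in Definition~\ref{dqqlkwdqwdwqdwqd}, to see which normalization of the fundamental class $[\P^1]$ one lands on. This is exactly the bookkeeping that the references \cite{Burgos-Wang}, \cite{buta2} carry out for the Beilinson--Deligne regulator, so I would cite those for the value of Beilinson's regulator on $c_1$ of the tautological bundle and match constants. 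The one genuinely unavoidable input is the classical normalization statement for Beilinson's regulator itself (e.g.\ that on $K_1(\Spec \C) \otimes \C^\times$ it is $\log|\cdot|$, or the corresponding statement for $c_1$ with its $(2\pi i)$'s); given that, the matching is a finite computation with differential forms on $I \times \P^1(\C)$.

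The main obstacle I expect is purely one of sign and normalization conventions: the two sources of $2\pi i$'s and $i$'s in the paper (the identification $\R(1)_\C \cong \C$, the twist $(2\pi i)^p$ in $\IDR$, and the powers $i^{p-1}$ appearing in Example~\ref{ijqwdiwqjdwqdljqwldkwqdqwdwqdwqd}) must be reconciled against whatever normalization of Beilinson's regulator one takes as the definition, and a single misplaced factor of $2\pi i$ or $i$ changes $\lambda$. So the bulk of the write-up will be a careful, once-and-for-all comparison in weight one, after which the general case is formal. I would also remark that \emph{existence} of a valid $\varepsilon$ is guaranteed a priori: Beilinson's regulator is a ring map in the homotopy category \cite{GilletRR}, hence by Remark~\ref{lkwjfwelkwejflfffewrwr} it equals $\reg_\varepsilon$ for some $\varepsilon$, so the lemma is really just the assertion that this $\varepsilon$ is unique, which is the $\lambda^p$-rescaling argument above.
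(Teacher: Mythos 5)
Your overall strategy---parametrize the choices of $\varepsilon$ by $\lambda\in\R^{\times}$ via Corollary \ref{dlqjwldqwdqdwqdwqdwd}, reduce to matching the first Chern class of the tautological class on $\P^{1}$, and solve for $\lambda$ in weight one---is the same as the paper's. However, the step you declare ``automatic'' is precisely the non-trivial input. That two multiplicative natural transformations $K_{*}\to\bigoplus_{p}H^{2p-*}_{\absHodge}(-,\R(p))$ agreeing on first Chern classes of line bundles must agree everywhere does \emph{not} follow formally from ring-generation arguments: higher $K$-groups of an arbitrary smooth variety are not generated as a ring by classes pulled back from Grassmannians, so agreement on $K_{0}(\Gr(N,k))$ does not propagate to $K_{1},K_{2},\dots$ by multiplicativity alone. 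What the paper actually invokes is a rigidity theorem, \cite[Thm.~5.6]{Feliu:aa} (building on \cite{Burgos-Wang}): two maps of presheaves of spaces which are monoid maps in the homotopy category and represent transformations into absolute Hodge cohomology coincide as soon as they agree on $K_{0}$ of all Grassmannians. This is where representability of both sides and the control of the indeterminacy on higher homotopy enter; the further reduction to the single class $c_{1}(\ell)$ then uses the relation $\phi_{p}([L])=\frac{1}{p!}\phi_{1}([L])^{p}$ and the splitting principle, as you suggest. Without a statement of Feliu's type your argument is incomplete.

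Your existence argument is also circular. Remark \ref{lkwjfwelkwejflfffewrwr} classifies those morphisms of motivic spectra $\bK\to\Hodge$ which induce ring maps in the homotopy category; it does not say that every multiplicative natural transformation on homotopy groups---in particular Beilinson's regulator as classically defined---is induced by such a morphism. That Beilinson's regulator equals some $\reg_{\varepsilon}$ is the content of the lemma, not something one may import from \cite{GilletRR}. The paper instead runs the comparison directly: after rescaling $\varepsilon$ so that $c_{\varepsilon}(\ell)$ matches the classical first Chern class, Feliu's theorem shows that $\reg_{\varepsilon}$ and Beilinson's regulator coincide. Your normalization bookkeeping in weight one (the factors of $i$ and $2\pi i$) is otherwise in line with what is needed.
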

\begin{proof}
We use \cite[Thm.~5.{6}]{Feliu:aa}. Due to the work of \cite{Burgos-Wang}
it is known that Beilinson's regulator is induced by a map between objects of $\Fun^{Zar}(\Sm_{\C},\sSet_{*}[W^{-1}])$ (this is realized by the model category $\mathbf{sT}_{*}$ in the reference)
such that the induced map in 
$\Ho(\Fun^{Zar}(\Sm_{\C},\sSet[W^{-1}]))$
 is a map of commutative monoids.
Similarly, the map 
$$\Omega^{\infty} {\map}(\Sigma^{\infty}_{+}(\dots),\bK)\to \Omega^{\infty} {\map}(\Sigma^{\infty}_{+}(\dots),\Hodge)$$
 induced by $\reg_{\varepsilon}$ can be considered as  such a map. Then by 
 \cite[Thm.~5.{6}]{Feliu:aa} both regulators coincide if they induce the same map
$$
K_{0}(Gr(N,k))\to \bigoplus_{p \in \mathbb{N}}  H^{2p}_{\absHodge}(Gr(N,k),\R(p))$$
{on $K_{0}$ of the Grassmannians $Gr(N,k)$} for all $N,k$. 
{If $$\phi=\sum_{p\in \nat} \phi_{p} :K_{0}(-)\to \bigoplus_{p\in \nat} H^{2p}_{\absHodge}(-,\R(p))$$ is a natural transformation of ring-valued functors, then
for a line bundle $L$  we necessarily have the relation
$$\phi_{p}([L])=\frac{1}{p!} \phi_{1}([L])^{p}\ .$$ This applies
to the Beilinson regulator as well as to  the transformation induced by \eqref{reg21} for $*=0$.} By the splitting principle
 it {now} suffices to show that they induce the same first Chern class. In detail, 
{the map \eqref{eq:1-xi} defines a map $\Sigma^{\infty}_{+}\P^{1}\to \bK$, i.e. a class $\ell\in \bK^{0}(\P^{1})\cong K_{0}(\P^{1})$.}  The restriction of $\reg_{\varepsilon}$ to this map 
 is the first Chern class 
  $$c_{\varepsilon} (\ell) \in  H^{2}_{\absHodge}(\P^{1},\R(1))\cong \R\ .$$

For every choice of $\varepsilon$ in {\eqref{qwdqwdwqdqwddd}} 
we have a multiplicative natural transformation
\eqref{reg21}.  
By Corollary \ref{dlqjwldqwdqdwqdwqdwwqd} we can rescale an initial choice of $\varepsilon$ in a unique way  
 such that afterwards 
 $c_{\varepsilon}(\ell)$ coincides with the first Chern class leading to Beilinson's regulator.
\end{proof}

We write $\reg:=\reg_{\varepsilon}$ for the choice of the equivalence {$\varepsilon$ in} {\eqref{qwdqwdwqdqwddd}} fixed in Lemma  \ref{dklqwjdwqdwqdwqdwd}.

\begin{proof}[Proof of Theorem~\ref{thm:main-thm-intro}]
By Lemma \ref{dklqwjdwqdwqdwqdwd} the morphism  of motivic spectra   $\reg:\bK\to \Hodge$  represents Beilinson's regulator. By Remark \ref{lkwjfwelkwejflfffewrwr} it induces a morphism of commutative algebras in the homotopy category. Combining this remark and Theorem~\ref{lncakqjjhdkjqwdhqwdq} every such morphism refines essentially uniquely   to a morphism in $\CAlg(\Sp^{\P^{1}})$. 
\end{proof}

For any smooth variety $X\in \Sm_{\C}$, using the diagonal, the motivic spectrum $\Sigma^{\infty}_{+}X$ refines naturally to a cocommutative coalgebra. 
Thus, by Corollary~\ref{cor:map-calg}, the algebraic $K$-theory spectrum of $X$ defined by $$\bK(X):= \map(\Sigma^{\infty}_{+}X, \bK)$$ refines naturally to a commutative ring spectrum. 
\begin{corollary}
For any $X\in\Sm_{\C}$, the Beilinson regulator refines to a morphism of commutative ring spectra
\[
\reg_{X}\colon \bK(X) \to H(\IDR(X))
\]
in a way which is natural in $X$.
\end{corollary}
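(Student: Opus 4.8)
The plan is to deduce this formally from the motivic result by applying a mapping-spectrum functor; essentially no new computation is needed.

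First, I would recall from Theorem~\ref{thm:main-thm-intro} — concretely, from Remark~\ref{lkwjfwelkwejflfffewrwr} together with Theorem~\ref{lncakqjjhdkjqwdhqwdq} — that the regulator $\reg\colon\bK\to\Hodge$ refines, essentially uniquely, to a morphism in $\CAlg(\MotSp)$; fix such a lift and continue to denote it by $\reg$. Second, as observed just above the statement, the diagonal $X\to X\times X$ and the projection $X\to\ast$ exhibit $\Sigma^{\infty}_{+}X$ as a cocommutative coalgebra in $\MotSp$, functorially in $X$, so that $\Sigma^{\infty}_{+}(-)$ factors through the $\infty$-category of cocommutative coalgebras in $\MotSp$. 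Third, by Corollary~\ref{cor:map-calg} the functor $\map(C,-)\colon\MotSp\to\Sp$ is lax symmetric monoidal for every cocommutative coalgebra $C$, hence induces a functor $\CAlg(\MotSp)\to\CAlg(\Sp)$; applying this, with $C=\Sigma^{\infty}_{+}X$, to the algebra morphism $\reg$ produces a morphism $\bK(X)=\map(\Sigma^{\infty}_{+}X,\bK)\to\map(\Sigma^{\infty}_{+}X,\Hodge)$ of commutative ring spectra. Fourth, the final Proposition of Section~\ref{sakbcsakjcascsacac8789} (which itself rests on Proposition~\ref{ednqwdkjweqdqwdqwd}) supplies a natural equivalence $\map(\Sigma^{\infty}_{+}X,\Hodge)\simeq H(\IDR(X))$ in $\CAlg(\Sp)$; composing yields the desired $\reg_{X}\colon\bK(X)\to H(\IDR(X))$. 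Finally, I would verify that on homotopy groups $\reg_{X}$ reproduces Beilinson's regulator \eqref{reg}, using Theorem~\ref{dlkqwkldqdwqdwqdwqd} for the identification $\pi_{*}\bK(X)\cong K_{*}(X)$, the formula \eqref{eq:def-abs-Hodge} for the target, and the normalization of $\varepsilon$ from Lemma~\ref{dklqwjdwqdwqdwqdwd}; combined with the equivalence $\bK(X)\simeq\calK(X)$ of Section~\ref{lkqlwdwqdqwdwqdwqdwqdw}, this also recovers Theorem~\ref{firstthm}.

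For naturality in $X$ the point is that every ingredient above is functorial. I would bundle steps two and three into a single statement: $\Sigma^{\infty}_{+}(-)$ takes values in cocommutative coalgebras, and forming mapping spectra out of cocommutative coalgebras into $\Sp$ is functorial with values in lax symmetric monoidal functors, so pairing against the fixed algebra map $\reg$ produces a natural transformation of $\CAlg(\Sp)$-valued presheaves on $\Sm_{\C}$; the equivalence with $H(\IDR(-))$ is then natural by the cited Proposition of Section~\ref{sakbcsakjcascsacac8789}. The one point demanding genuine care — and the only place I expect to slow down — is this last coherence, namely that the lax symmetric monoidal structure on $\map(\Sigma^{\infty}_{+}X,-)$ varies functorially in $X$, so that we obtain an honest natural transformation rather than a levelwise family of ring maps. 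This is, however, purely formal given the functoriality in the coalgebra variable recorded in the appendix behind Corollary~\ref{cor:map-calg}, so I anticipate no real obstruction; the mathematical content is entirely carried by Theorem~\ref{thm:main-thm-intro} and the identification of $\map(\Sigma^{\infty}_{+}X,\Hodge)$ with $H(\IDR(X))$.
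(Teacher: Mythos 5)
Your argument is exactly the paper's: the paper proves this corollary by applying $\map(\Sigma^{\infty}_{+}X,-)$ to the refined $\reg\colon\bK\to\Hodge$ from Theorem~\ref{lncakqjjhdkjqwdhqwdq}, using Corollary~\ref{cor:map-calg} and the identification $\map(\Sigma^{\infty}_{+}X,\Hodge)\simeq H(\IDR(X))$, just as you propose. Your write-up merely makes explicit the functoriality and normalization points that the paper leaves implicit.
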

This finally proves Theorem~\ref{firstthm}.
\begin{proof}
Applying $\map(\Sigma^{\infty}_{+}X, -)$ to $\reg:\bK\to \Hodge$ this follows from Theorem  \ref{lncakqjjhdkjqwdhqwdq}.
\end{proof}

\appendix

\section{Infinity categories and weak equivalences}\label{appendix_Kram}
\label{lkjdqlkwdjlwqdwqdqdq}

Throughout this paper we freely use the language of $\infty$-categories as developed by Joyal, Lurie and many others. An $\infty$-category is an inner Kan simplicial set $\bC$. 
Every ordinary category $\bC$ gives rise to an $\infty$-category by taking the nerve $N\bC$. This embeds ordinary categories into $\infty$-categories and we will usually drop the nerve notation and identify the $1$-category
$\bC$ with the associated $\infty$-category. For a simplicially enriched category $\bC^\Delta$ there is also a variant of the nerve (the homotopy coherent nerve) $\N\bC^\Delta$ which is an $\infty$-category if $\bC^\Delta$ is enriched over Kan complexes 
(see \cite[Section 1.1.5]{HTT}). 

For example, we let $\Spc^\Delta$
be the simplicial category whose objects are Kan complexes and whose 
simplicial sets of homomorphisms are the internal Hom-objects of simplicial sets. These are Kan complexes as well and thus we obtain 
an $\infty$-category which we denote as $\Spc := \N(\Spc^\Delta)$ and call it the $\infty$-category of spaces. Another instance is the simplicially enriched category $\Cat_\infty^\Delta$ of $\infty$-categories. 
The simplicial set of morphisms
between $X, Y$ is given by the maximal Kan complex contained in the simplicial set $\Hom(X,Y)$. Then the $\infty$-category of $\infty$-categories is defined to be $\Cat_\infty := \N(\Cat_\infty^\Delta)$.
Note that the objects in $\Cat_\infty$ can be large $\infty$-categories (as opposed to small). Thus $\Cat_\infty$ will be very large, which just means that we have to assume three nested Grothendieck universes (small, large, very large) in which we work.
 
There is another way of constructing $\infty$-categories. Therefore consider an $\infty$-category $\bC$ (which will in practice very often be a 1-category) equipped with a subset $W$ of the set of edges.
 We will call the elements of $W$
weak equivalences. Then an $\infty$-category $\bD$ together with a functor $i: \bC \to \bD$ is called {\emph{(Dwyer-Kan) localization of $\bC$ with respect to $W$}} if the functor $i$ sends weak equivalences in $\bC$ to equivalences in $\bD$
and $\bD$ satisfies the following universal property: for every $\infty$-category $\bE$ the functor
$$ i^*: \Fun(\bD,\bE) \to \Fun(\bC, \bE) $$
is fully faithful and the essential image consists of those functors $\bC \to \bE$ which carry weak equivalences in $\bC$ to equivalences in $\bE$. 
It is clear that $\bD$ is essentially uniquely determined by this universal property and it can be shown that a relative nerve exists for every $\infty$-category $\bC$ with weak equivalences.

 Since we will use the term `essentially unique' a number of times in this paper, let us spell out explicitly what this means here: consider the full subcategory $\bA$ of the slice $\infty$-category 
$(\Cat_\infty)_{\bC/}$ which consists of all relative nerves of $\bC$ with respect to $W$. 
 Then $\bA$ is a contractible Kan complex. 
  There are several explicit constructions in the literature for relative nerves, for example Dwyer-Kan's Hammock localization or fibrant replacements in the marked model structure.
All of these constructions are necessarily equivalent by the above uniqueness assertion. We once and for all fix one explicit construction for the localization of $\bC$ and denote it by $\bC[W^{-1}].$ 
We assume that the construction is functorial in $\bC$ (i.e. in functors that preserve weak equivalences).

\begin{example}
The canonical inclusion $\sSet \to \Spc$ exhibits $\Spc$ as the localization at the class of weak equivalences. Here $\sSet$ is the 1-category of simplicial sets. Thus we have $\Spc \simeq \sSet[W^{-1}]$.
More generally, for a model category $\bM$ we can form $\bM[W^{-1}]$ and this is an enhancement of the homotopy category $\Ho(\bM)$ which is obtained by universally inverting $W$ in the world of 1-categories. 
If $\bM$ admits a simplicial enrichment $\bM^\Delta$ making it a simplicial model category then the $\infty$-category $\bM[W^{-1}]$ (which only depends on the underlying 1-category $\bM$ and the notion of weak equivalence) 
is equivalent to the homotopy coherent nerve of the full 
simplicial subcategory $\bM^\Delta_{cf} \subset \bM^\Delta$ on the fibrant and cofibrant objects (which a priori depends on the simplicial mapping spaces).
\end{example}

A lot of theory has been developed for $\infty$-categories which parallels well known results and concepts in ordinary category theory. Many of the results listed below are due to Joyal \cite{Joyal}, but our main sources will be the books of Lurie \cite{HTT} and \cite{HA}. Let us list the aspects that we will need in this paper:
\begin{enumerate}
 \item There is a notion of limit and colimit in an $\infty$-category \cite[Section 1.2.13]{HTT}. This generalizes the notion of homotopy limit and colimit in model categories as shown in \cite[Section 4.2.4]{HTT}. 
The properties of (co)limits in ordinary categories mostly carry over to that world \cite[Chapter 4]{HTT}, for example pullbacks and pushouts satisfy a pasting-law of which we will make repeatedly use in this paper. 
Important for us is that there is a notion of a filtered $\infty$-category and filtered (co)limits \cite[5.3.1]{HTT}.
 \item There is the notion of adjoint functors \cite[5.2]{HTT}, and {it} behaves similarly to the ordinary case. The most important fact for us is that left adjoint functors preserve all colimits and right adjoints preserve all limits. 
Adjunctions between model categories give rise
to adjunctions between the associated $\infty$-categories \cite{Mazel-Gee}. 

\item For every pair of $\infty$-categories {$\bC,\bD$,} the internal Hom in simplicial sets is again an $\infty$-category and denoted $\Fun(\bC,\bD)$. The functor category $\Fun(\bC^{op}, \Spc)$ is called the presheaf category on 
$\bC$ and denoted as $\mathcal{P}(\bC)$. The $\infty$-categorical Yoneda embedding defines a fully faithful inclusion $\bC \to \mathcal{P}(\bC)$ sending $c \in \bC$ to the functor 
$\Map_\bC(-,c)$ \cite[5.1.3]{HTT}.
 \item There is a notion of a presentable $\infty$-category which combines a set-theoretical smallness condition (accessibility) with the existence of all colimits and limits \cite[5.5]{HTT}. The notion of being accessible is closely related to the 
ind-completion that we will discuss below. The underlying $\infty$-category of a combinatorial, simplicial model category is presentable, and in fact every presentable $\infty$-category arises in that way. 
The class of presentable $\infty$-categories has good closure properties, for example, Bousfield localizations of presentable $\infty$-categories are usually again presentable (precisely: if the localization is accessible 
\cite[Rem.~5.5.1.6]{HTT}). 
Note that the analogue of presentability in ordinary category theory is usually called `locally presentable'. 

In the presentable setting we have the 
adjoint functor theorem in its cleanest form, namely that a functor between presentable $\infty$-categories is left adjoint precisely if it preserves all small colimits \cite[Cor.~5.5.2.9]{HTT}.

\item There are the notions of a symmetric monoidal $\infty$-category and of (lax) symmetric monoidal functors \cite[Chapter 2]{HA}. These are compatible with the constructions of $\infty$-categories outlined above. If a simplicial category 
$\bC^\Delta$ has a symmetric monoidal structure, then the nerve $\N \bC^\Delta$ inherits a natural symmetric monoidal structure as well. For a 1-category $\bC$, a symmetric monoidal structure on the associated 
$\infty$-category in the $\infty$-categorical sense is (essentially) equivalent to a symmetric monoidal structure on $\bC$. If $\bC$ admits a symmetric monoidal structure and a class of weak equivalences, such that
the tensor product is homotopical in both variables (i.e.~preserves weak equivalences), then there is an induced symmetric monoidal structure on $\bC[W^{-1}]$.

Note that we  slightly deviated from Lurie's terminology  here and write a symmetric monoidal category only as the underlying $\infty$-category $\bC$ leaving the tensor product implicit and not as $\bC^\otimes$. 
Also the terminology of lax symmetric monoidal functors has not been used by Lurie explicitly. A lax symmetric monoidal functor $\bC \to \bC'$ is just a map of the underlying $\infty$-operads in the terminology of \cite{HA}.

In a symmetric monoidal $\infty$-category $\bC$ we can define commutative algebra objects and an $\infty$-category {of commutative algebra objects} $\CAlg(\bC)$. Every lax symmetric  monoidal functor $f: \bC \to \bD$ induces a functor $\CAlg(\bC) \to \CAlg(\bD)$ which on underlying 
objects sends $c$ to $f(c)$.

\item \label{pressymmon} We call a symmetric monoidal $\infty$-category presentably symmetric monoidal if its underlying $\infty$-category is presentable and the tensor product preserves colimits in both variables separately. For every
simplicial, combinatorial, symmetric monoidal model category the underlying symmetric monoidal $\infty$-category has this property \cite[Rem.~4.1.3.10]{HA}. The converse is even true, namely that every presentably symmetric monoidal $\infty$-category arises in
this way from a symmetric monoidal model category \cite{NikSag}. 

If $\bC$ is presentably symmetric monoidal, then the $\infty$-category of commutative algebras $\CAlg(\bC)$ is also presentable, in particular, it has all limits and colimits.
\item \label{stabilization}
An $\infty$-category $\bC$ is called stable if it admits finite limits and colimits, is pointed (i.e.~has an object that is initial and terminal), and a square $\Delta^1 \times \Delta^1 \to \bC$ depicted as
$$
\xymatrix{
A \ar[r]\ar[d] & B \ar[d] \\
C \ar[r] & D
}
$$
(we omit the diagonal map and the filling homotopies) is a pushout if and only if it is a pullback (see Chapter 1 of \cite{HA} for a discussion). Examples are given by the $\infty$-category $\Sp$ of spectra and, for every abelian category $\bA$, the $\infty$-category $\Ch(\bA)[W^{-1}]$ where $W$ is the class of quasi-isomorphisms.

For every $\infty$-category $\bC$ which admits finite limits there is a stabilization $\Sp(\bC)$, see \cite[1.4.2]{HA}. This in turn is functorial, i.e.~for every functor $\bC \to \bD$ which preserves finite limits we get an 
induced functor $\Sp(\bC) \to \Sp(\bD)$. For example, $\Sp(\Spc) \simeq \Sp$ and $\Sp(\Fun(\bC,\Spc)) \simeq \Fun(\bC, \Sp)$.
\end{enumerate}
Now we briefly recall the construction of the Ind-completion of an $\infty$-category which will be essential for this paper, see also \cite[Section 5.3]{HTT}. Let $\bC$ be an $\infty$-category. Then the Ind-completion is obtained by formally 
adding filtered colimits to $\bC$. 

\begin{definition}\label{def:Ind-completion}
We say that a functor $i: \bC \to \bD$ exhibits $\bD$ as the Ind-completion of $\bC$ if $\bD$ has all filtered colimits and if for every $\infty$-category $\bE$ which has all filtered colimits the induced morphism
$$
i^*: \Fun^\omega(\bD , \bE) \to \Fun(\bC,\bE)
$$
is an equivalence. Here $\Fun^\omega(\bD,\bE)$ denotes the full subcategory of $\Fun(\bD,\bE)$ spanned by the functors  which preserve filtered colimits.
\end{definition} 
An explicit model for the Ind-completion of $\bC$ can be obtained as follows. Let $\mathcal{P}(\bC)$ be the $\infty$-category of space-valued presheaves on $\bC$. Then the Yoneda embedding defines a fully faithful 
inclusion $\bC \to \mathcal{P}(\bC)$ and the objects in the essential image are called representable. The category $\mathcal{P}(\bC)$ has all colimits (in fact it is the universal category obtained from $\bC$ 
by adding all colimits).
We let $\Ind(\bC)$ be the full subcategory of $\mathcal{P}(\bC)$ which contains the representables and which is closed under filtered colimits. Then the Yoneda embedding exhibits $\Ind(\bC)$ as the Ind-completion
of $\bC$. 
In particular, this shows that the morphism $\bC \to \Ind(\bC)$ is fully faithful.

Let us now list a few properties of the Ind-completion that we frequently use in the paper:
\begin{itemize}
\item If $\bC$ is small and has all finite colimits, then $\Ind(\bC)$ is presentable, in particular it has all colimits. In this case the functor $\bC \to \Ind(\bC)$ preserves finite colimits \cite[Thm.~5.5.1.1]{HTT}.
\item If $\bC$ is symmetric monoidal, then $\Ind(\bC)$  also admits a symmetric monoidal structure, namely the unique extension of $\otimes$ that preserves filtered colimits in both variables separately 
\cite[Cor.~4.8.1.13]{HA}. 
If $\bC$ admits all small colimits and the tensor product preserves small colimits separately in each variable, then $\Ind(\bC)$ is presentably symmetric monoidal 
(see \ref{pressymmon} for this notion). 
\item
If $\bC$ is stable,  then so is $\Ind(\bC)$ \cite[Prop.~1.1.3.6]{HA}. 
\item If $\bC$ is (the nerve of) a 1-category, then $\Ind(\bC)$ also is a 1-category. This implies in particular that it agrees with the classical Ind-completion, i.e.~$N \Ind(\bC) \simeq \Ind(N\bC)$. 
\item If $\bC$ is an $\infty$-category, and if $W$ a class of 1-morphisms in $\bC$, then we call a morphism in $\Ind(\bC)$ a weak equivalence, if its image in $\Ind(\bC[W^{-1}])$ is an equivalence. We get a canonical morphism
$$(\Ind(\bC)) [W^{-1}] \to \Ind(\bC[W^{-1}]).$$
\item
If a functor $F: \bC \to \bD$ has a symmetric monoidal structure then $\Ind(F): \Ind(\bC) \to \Ind(\bD)$ also inherits a symmetric monoidal structure. In particular we get that
$\Ind(\bC)\to \Ind(\bC[W^{-1}])$ is symmetric monoidal if $\bC$ admits a tensor product that is homotopical with respect to $W$.
\end{itemize}

\section{Representable functors and algebra structures}\label{qkdhkqwdlwdwqddqwdqd}

Let $\bC$ be an $\infty$-category. Then for every $c \in \bC$ we have the representable functor 
$$
\Map(-,c): \bC^{op} \to \Spc.
$$
If $\bC$ is presentable, then an abstract functor $F: \bC^{op} \to \Spc$ is representable precisely if it preserves limits (i.e.~it sends colimits in $\bC$ to limits in the $\infty$-category of spaces). 
If $\bC$ is a stable $\infty$-category, then
for every $c \in \bC$ the representable functor refines to a functor
$$
\map(-,c): \bC^{op} \to \Sp .
$$ 
The functor $\map(-,c)$ has the property that it sends colimits in $\bC$ to limits of spectra and that 
\begin{equation}\label{eqn_eins-d}
\Map(-,c) \simeq \Omega^\infty \map(-,c)\ .
\end{equation}
These two properties determine $\map(-,c)$ essentially uniquely. If $\bC$ is presentable and stable, then the converse is also true: an abstract functor is of the form $\map(-,c)$ precisely if it
sends colimits to limits.

For every stable $\bC$ the assignment $c \mapsto \map(-,c)$ gives a functorial assignment $\bC \to \Fun(\bC^{op}, \Sp)$ which is fully faithful. We will refer to this functor as the stable 
Yoneda embedding. A concrete description of the stable Yoneda embedding as a functor can be given by applying the stabilization functor $\Sp(-)$ (see \ref{stabilization}) to the limit preserving Yoneda embedding to obtain 
$$
\bC \simeq \Sp(\bC) \to \Sp(\Fun(\bC^{op},\Spc)) \simeq \Fun(\bC^{op},\Sp).
$$

Now assume that $\bC$ admits a symmetric monoidal structure (here $\bC$ is not necessarily stable). Then also $\bC^{op}$ admits a symmetric monoidal structure which is constructed by first straightening the associated coCartesian fibration, 
then taking the opposite object-wise and unstraightening back to a coCartesian fibration. There is also a more explicit model discussed in \cite{BarDualizing}. Informally speaking this new symmetric monoidal structure 
is given by the `same' tensor product since the objects of $\bC^{op}$ are the same as those of $\bC$ but the coherence structure has to be modified accordingly. 
Then the presheaf category $\mathcal{P}(\bC) = \Fun(\bC^{op}, \Spc)$ admits a Day convolution 
type symmetric monoidal structure as introduced by Glasman \cite{Glas} generalizing the classical Day convolution \cite{Day}. Let us state the more general result that is shown there:

\begin{proposition}[Glasman]\label{prop_glas}
If $\bC$ is symmetric monoidal and $\bD$ is presentably symmetric monoidal,  then the functor category $\Fun(\bC,\bD)$ admits a symmetric monoidal structure with the following properties:
\begin{enumerate}
 \item For two functors $F,G: \bC \to \bD$, the tensor product $F \otimes G \in \Fun(\bC,\bD)$ is equivalent to the left Kan extension of the functor
$$
\bC \times \bC \xto{F \times G} \bD \times \bD \xto{\otimes_{\bD}} \bD
$$
along the functor $\otimes_{\bC}: \bC \times \bC \to \bC$ \cite[Lemma 2.3, Prop.~2.9]{Glas}.
\item
A commutative algebra structure on a functor $F: \bC \to \bD$ with respect to the  Day convolution on $\Fun(\bC, \bD)$ is equivalent to a lax symmetric monoidal structure on $F$ \cite[Prop.~2.10]{Glas}. More precisely 
there is an equivalence of $\infty$-categories
$$
\CAlg(\Fun(\bC,\bD)) \simeq \Fun^{lax}(\bC,\bD).
$$
\item\label{point3}
For every left adjoint, symmetric monoidal functor $\bD \to \bD'$ the induced functor $\Fun(\bC,\bD) \to \Fun(\bC,\bD{'})$ admits a symmetric monoidal refinement. 
\item 
The Yoneda embedding $\bC \to \Fun(\bC^{op}, \Spc)$ admits a symmetric monoidal refinement, where the target is equipped with the Day convolution structure \cite[Section~3]{Glas}.
\end{enumerate}
\end{proposition}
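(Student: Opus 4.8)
The plan is to build the Day convolution symmetric monoidal structure on $\Fun(\bC,\bD)$ and then extract the four statements from its construction and universal property; the only real input is that $\bD$, being presentably symmetric monoidal, has all small colimits and a tensor product preserving them variablewise, which is precisely the hypothesis under which Day convolution exists $\infty$-categorically (Lurie \cite[\S2.2.6]{HA}, Glasman \cite{Glas}). Concretely, I would pass to the underlying $\infty$-operads $\bC^{\otimes},\bD^{\otimes}\to N(\mathrm{Fin}_{*})$, form the simplicial set $\Fun(\bC,\bD)^{\otimes}$ over $N(\mathrm{Fin}_{*})$ determined by
$$
\mathrm{Hom}_{/N(\mathrm{Fin}_{*})}\bigl(K,\Fun(\bC,\bD)^{\otimes}\bigr)\;\simeq\;\mathrm{Hom}_{/N(\mathrm{Fin}_{*})}\bigl(K\times_{N(\mathrm{Fin}_{*})}\bC^{\otimes},\bD^{\otimes}\bigr),
$$
and then use the cocompleteness of $\bD$ together with operadic left Kan extension along the active morphisms of $N(\mathrm{Fin}_{*})$ to promote this $\infty$-operad to a genuine symmetric monoidal $\infty$-category, whose underlying $\infty$-category is $\Fun(\bC,\bD)$. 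This is the Day convolution; it carries a universal property describing algebras in it over an $\infty$-operad $\bE^{\otimes}$ in terms of suitable algebras over the product operad $\bE^{\otimes}\times_{N(\mathrm{Fin}_{*})}\bC^{\otimes}$ in $\bD$.

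Property (1) is then unwound from the construction: the binary tensor product sits over the essentially unique active map $\langle 2\rangle\to\langle 1\rangle$, and the operadic left Kan extension computing its value is the coend
$$
(F\otimes G)(c)\;\simeq\;\int^{(c_{1},c_{2})\in\bC\times\bC}\Map_{\bC}(c_{1}\otimes c_{2},c)\odot\bigl(F(c_{1})\otimes_{\bD}G(c_{2})\bigr),
$$
where $\odot$ is the tensoring of the cocomplete $\infty$-category $\bD$ over $\Spc$; this is exactly the pointwise formula for the left Kan extension of $\otimes_{\bD}\circ(F\times G)$ along $\otimes_{\bC}$. Property (2) follows by specializing the universal property of the previous paragraph to the commutative $\infty$-operad $\bE^{\otimes}=N(\mathrm{Fin}_{*})$: algebras in the Day convolution over it are the commutative algebra objects $\CAlg(\Fun(\bC,\bD))$, while the corresponding algebras in $\bD$ unwind to maps of $\infty$-operads $\bC^{\otimes}\to\bD^{\otimes}$, i.e.\ to lax symmetric monoidal functors; naturality of this bijection in both variables upgrades it to the asserted equivalence $\CAlg(\Fun(\bC,\bD))\simeq\Fun^{lax}(\bC,\bD)$.

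Property (3): a symmetric monoidal left adjoint $\phi\colon\bD\to\bD'$ induces post-composition $\phi_{*}\colon\Fun(\bC,\bD)\to\Fun(\bC,\bD')$, and since $\phi$ preserves all colimits it commutes with left Kan extension along $\otimes_{\bC}$; together with $\phi\circ\otimes_{\bD}\simeq\otimes_{\bD'}\circ(\phi\times\phi)$ and formula (1) this gives natural equivalences $\phi_{*}(F)\otimes\phi_{*}(G)\simeq\phi_{*}(F\otimes G)$, and the higher coherences come, as before, by transporting along the operadic model. Property (4): by the coend formula, Day convolution of representables computes via co-Yoneda as $\Map_{\bC}(-,c)\otimes\Map_{\bC}(-,c')\simeq\Map_{\bC}(-,c\otimes c')$, so the Yoneda embedding is strong symmetric monoidal; equivalently, $\mathcal{P}(\bC)=\Fun(\bC^{op},\Spc)$ with Day convolution is the free presentably symmetric monoidal $\infty$-category on $\bC$, and the Yoneda embedding is the universal symmetric monoidal functor into it (\cite[\S4.8.1]{HA}, \cite[\S3]{Glas}).

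The one genuinely nontrivial point — and the main obstacle — is the construction in the first paragraph: producing a \emph{coherent} symmetric monoidal $\infty$-category $\Fun(\bC,\bD)^{\otimes}$, i.e.\ an actual coCartesian fibration over $N(\mathrm{Fin}_{*})$ satisfying the Segal conditions and the stated universal property, rather than merely writing the bifunctor $\otimes$ and checking associativity, symmetry and unitality up to homotopy. Here one must either invoke Lurie's existence theorem for operadic left Kan extension over a symmetric monoidal base compatible with colimits, or Glasman's explicit twisted-arrow model, and verify directly that the resulting object is an $\infty$-operad with the required universal mapping property. Once this coherent structure is available, properties (1)--(4) are bookkeeping with coends, co-Yoneda, and the interchange of colimit-preserving functors with left Kan extensions.
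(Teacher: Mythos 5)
This proposition is stated in the paper as a citation of Glasman's results; the paper offers no proof of its own beyond the references to \cite{Glas} and \cite{HA}. Your sketch is a correct reconstruction of exactly that cited material: the definition of the Day convolution $\infty$-operad by the mapping property over $N(\mathrm{Fin}_*)$, its promotion to a symmetric monoidal $\infty$-category via operadic left Kan extension (using that $\bD$ is presentably symmetric monoidal), the coend/pointwise-Kan-extension formula for the binary product giving (1), the specialization of the universal property to the commutative operad giving (2), the commutation of a colimit-preserving symmetric monoidal functor with the operadic Kan extensions giving (3), and co-Yoneda plus the free-cocompletion universal property of $\mathcal{P}(\bC)$ giving (4). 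You also correctly identify where the real work lies — producing the coherent coCartesian fibration rather than just the binary tensor with associativity up to homotopy — and appropriately defer that to Lurie's existence theorem for operadic left Kan extensions or Glasman's twisted-arrow model, which is precisely what the cited sources do. Two small caveats: the construction requires $\bC$ to be (essentially) small, or at least that the relevant Kan extensions exist in $\bD$, an implicit hypothesis you should state; and in (3) the phrase about higher coherences ``transporting along the operadic model'' should be made precise by noting that the induced map $\Fun(\bC,\bD)^{\otimes}\to\Fun(\bC,\bD')^{\otimes}$ preserves coCartesian lifts of active morphisms because those lifts are computed by colimits that the left adjoint preserves.
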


\begin{corollary} \label{cor_yoneda}
For a given object $c \in \bC$ in a symmetric monoidal $\infty$-category $\bC$, there is a homotopy equivalence between the space of lax symmetric monoidal structures on $\Map(-,c): \bC^{op} \to \Spc$ and the space of refinements of $c$ to an object of $\CAlg(\bC)$.
\end{corollary}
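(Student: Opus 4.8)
The plan is to deduce the corollary from Glasman's results recalled in Proposition~\ref{prop_glas}. Write $\mathcal{P}(\bC):=\Fun(\bC^{op},\Spc)$ for the presheaf category, equipped with the Day convolution symmetric monoidal structure, and let $y\colon\bC\to\mathcal{P}(\bC)$ denote the Yoneda embedding, so that $y(c)=\Map(-,c)$. By Proposition~\ref{prop_glas} there is an equivalence $\CAlg(\mathcal{P}(\bC))\simeq\Fun^{lax}(\bC^{op},\Spc)$ which is compatible with the forgetful functors to $\mathcal{P}(\bC)$, and $y$ admits a symmetric monoidal refinement. By definition, the space of lax symmetric monoidal structures on $\Map(-,c)$ is the fibre of the forgetful functor $\Fun^{lax}(\bC^{op},\Spc)\to\mathcal{P}(\bC)$ over $\Map(-,c)$, hence under the above equivalence it is identified with the fibre of $U\colon\CAlg(\mathcal{P}(\bC))\to\mathcal{P}(\bC)$ over $y(c)$. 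On the other hand, the space of refinements of $c$ to an object of $\CAlg(\bC)$ is the fibre of $U\colon\CAlg(\bC)\to\bC$ over $c$. So it remains to compare the fibres of the respective forgetful functors over $c$ and over $y(c)$.

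Since $y$ is symmetric monoidal it induces a functor $y_*\colon\CAlg(\bC)\to\CAlg(\mathcal{P}(\bC))$ sitting in a commuting square
\[
\xymatrix{
\CAlg(\bC)\ar[r]^-{y_*}\ar[d]_{U} & \CAlg(\mathcal{P}(\bC))\ar[d]^{U}\\
\bC\ar[r]^-{y} & \mathcal{P}(\bC).
}
\]
The key claim is that this square is cartesian. Since $y$ is moreover fully faithful, this is an instance of the standard fact that a fully faithful symmetric monoidal functor induces a fully faithful functor on commutative algebra objects, with essential image spanned by the algebras whose underlying object lies in the image: unwinding the $\infty$-operadic definition of commutative algebra objects, an algebra structure on $y(c)$ is a coherent system of multiplication maps $y(c)^{\otimes n}\simeq y(c^{\otimes n})\to y(c)$, and full faithfulness of $y$ identifies each such datum, together with all of its coherences, with the image under $y$ of a unique corresponding datum on $c$, i.e.\ with a unique commutative algebra structure on $c$. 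Taking the fibre of the cartesian square over $c$ then gives an equivalence between the fibre of $U\colon\CAlg(\bC)\to\bC$ over $c$ and the fibre of $U\colon\CAlg(\mathcal{P}(\bC))\to\mathcal{P}(\bC)$ over $y(c)=\Map(-,c)$. Combined with the identification of the previous paragraph, this is the desired homotopy equivalence.

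The only step requiring genuine care is the cartesianness of the square above — equivalently, the assertion that passing to commutative algebra objects along the fully faithful symmetric monoidal Yoneda embedding neither creates nor destroys algebra structures beyond those already visible on underlying objects. This is a formal consequence of the definition of commutative algebras in a symmetric monoidal $\infty$-category (cf.\ \cite{HA}); the remainder of the argument is bookkeeping with fibres together with the cited results of Glasman. I do not expect substantial difficulties beyond locating, or re-deriving, the precise form of this operadic full-faithfulness statement.
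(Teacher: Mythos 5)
Your argument is correct and is essentially the paper's own proof: both identify the two spaces as fibres of the forgetful functors from $\CAlg(-)$, and both reduce to the cartesianness of the square comparing $\CAlg(\bC)\to\bC$ with $\CAlg(\mathcal{P}(\bC))\to\mathcal{P}(\bC)$ via the fully faithful symmetric monoidal Yoneda embedding, finishing with Glasman's identification of $\CAlg$ of the Day convolution with lax symmetric monoidal functors.
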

\begin{proof}
The space  $\CAlg_\bC(c)$ of commutative algebra structures on $c \in \bC$ is defined as the pullback
$$
\xymatrix{
\CAlg_\bC(c) \ar[d]\ar[r] & \CAlg(\bC) \ar[d] \\
\Delta^0 \ar[r]^{c} & \bC
}
$$
in $\Cat_\infty$ where the right vertical functor is the forgetful functor. This functor reflects equivalences, which implies that $\CAlg_\bC(c)$ is an $\infty$-groupoid.  

We now use the fact that the Yoneda embedding is fully faithful and symmetric monoidal (as stated in Proposition \ref{prop_glas}) to conclude that the induced functor
$$\CAlg(\bC) \to \CAlg(\mathcal{P}(\bC))$$ is also fully faithful and thus the resulting diagram
$$
\xymatrix{
\CAlg(\bC) \ar[d]\ar[r] & \CAlg(\mathcal{P}(\bC))\ar[d] \\
\bC \ar[r] & \mathcal{P}(\bC)
}
$$
is a pullback diagram. By pasting together the two pullback diagrams we conclude that we have an equivalence
$$
\CAlg_\bC(c) \simeq  \CAlg_{\mathcal{P}(\bC)}(c)
$$
i.e.~that the space of algebra structures on $c$ is equivalent to the space of algebra structures on $\Map(-,c)$. But the latter is equivalent to the space of lax monoidal structures on that functor 
(as stated in Proposition \ref{prop_glas}) which 
finishes the proof.
\end{proof}

Now assume that $\bC$ is stably symmetric monoidal and $c \in \bC$. By stably symmetric monoidal we mean that the tensor bifunctor $\otimes: \bC \times \bC \to \bC$ preserves finite colimits in both variables separately 
(or, equivalently, finite limits in both variables separately). 

\begin{proposition}
The stable Yoneda embedding 
\[
\bC \to \Fun(\bC^{op}, \Sp)
\]
admits a lax symmetric monoidal refinement. 
\end{proposition}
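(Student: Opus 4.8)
The plan is to realize the stable Yoneda embedding as the stabilization of the ordinary Yoneda embedding and then to transport the symmetric monoidal structure of Glasman's theorem through the stabilization functor. Recall from Proposition~\ref{prop_glas}(4) that the ordinary Yoneda embedding $j\colon\bC\to\mathcal{P}(\bC)=\Fun(\bC^{op},\Spc)$ is (strong) symmetric monoidal when the target carries the Day convolution. Since $\bC$ is stable it has finite limits, so $j$ is left exact, and we may apply the stabilization functor $\Sp(-)$ of (\ref{stabilization}), which is functorial for left-exact functors. Using that $\bC$ is stable, so $\Sp(\bC)\simeq\bC$, and that $\Sp(\Fun(\bC^{op},\Spc))\simeq\Fun(\bC^{op},\Sp)$ (stabilization of presheaves of spaces is presheaves of spectra, formed objectwise), one checks that $\Sp(j)$ is exactly the stable Yoneda embedding $c\mapsto\map(-,c)$: because $j$ preserves finite limits, the $n$-fold delooping of $j(c)$ in $\mathcal{P}(\bC)$ is $j(\Sigma^n c)=\Map_\bC(-,\Sigma^n c)$, and the sequence $\{\Map_\bC(x,\Sigma^n c)\}_n$ is precisely the mapping spectrum $\map_\bC(x,c)$ (compare the description in Appendix~\ref{qkdhkqwdlwdwqddqwdqd}).

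Next I would invoke that the stabilization functor, viewed as an endofunctor of the $\infty$-category of pointed $\infty$-categories with finite limits and left-exact functors, is lax symmetric monoidal: it is in fact the localization onto the full subcategory of stable $\infty$-categories, realized as tensoring with an idempotent commutative algebra object, hence symmetric monoidal onto its essential image (part of the theory of the Lurie tensor product, \cite[\S4.8]{HA}). A lax symmetric monoidal functor of symmetric monoidal $\infty$-categories induces a functor on commutative algebra objects carrying (strong) symmetric monoidal functors to (strong) symmetric monoidal functors; applying this to the strong symmetric monoidal $j$ shows that $\Sp(j)\colon\bC\to\Fun(\bC^{op},\Sp)$ is symmetric monoidal for the monoidal structure on $\Fun(\bC^{op},\Sp)$ obtained by transporting the Day convolution on $\mathcal{P}(\bC)$ through $\Sp(-)$. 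Since $\Sp(\bC)\simeq\bC$ as symmetric monoidal $\infty$-categories, this produces a refinement of $c\mapsto\map(-,c)$ to a symmetric monoidal functor, but for an a priori nonstandard monoidal structure on the target.

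The remaining, and principal, point is to compare this transported monoidal structure with the Day convolution on $\Fun(\bC^{op},\Sp)$ itself. These do not coincide — their units are $\map(-,\mathbb{1}_\bC)$ and $\Sigma^\infty_+\Map_\bC(-,\mathbb{1}_\bC)$ respectively — but there is a canonical lax symmetric monoidal functor from the transported structure to the Day convolution, the identity on underlying $\infty$-categories, whose lax structure maps are induced by the counit $\Sigma^\infty_+\Omega^\infty\Rightarrow\operatorname{id}$ of the stabilization adjunction applied objectwise (so that the lax unit map is $\Sigma^\infty_+\Map_\bC(-,\mathbb{1}_\bC)\to\map_\bC(-,\mathbb{1}_\bC)$). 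Composing $\Sp(j)$ with this comparison yields the desired lax symmetric monoidal refinement of $\bC\to(\Fun(\bC^{op},\Sp),\text{Day})$. I expect this last bookkeeping step to be the main obstacle, namely making precise both that $\Sp(-)$ is lax symmetric monoidal in the stated form and that the comparison of the two monoidal structures on $\Fun(\bC^{op},\Sp)$ is itself lax symmetric monoidal; once these are in place the reduction to Proposition~\ref{prop_glas} is formal. An alternative route, avoiding the comparison, is to run the Day convolution machinery in the $\Sp$-enriched ($\Sp$-linear) setting, where $\bC$ stable is canonically $\Sp$-enriched compatibly with its tensor product and the enriched Yoneda embedding is even strong monoidal; the cost there is setting up the enriched Day convolution carefully enough to match the (unenriched) symmetric monoidal structure on $\Fun(\bC^{op},\Sp)$ used elsewhere in the appendix.
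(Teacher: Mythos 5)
Your identification of the stable Yoneda embedding with $\Sp(j)$, the stabilization of the ordinary Yoneda embedding, is fine and matches the description in Appendix~\ref{qkdhkqwdlwdwqddqwdqd}; the gap is in the monoidal part of the argument, at exactly the step you flag as ``bookkeeping'' but which is in fact where the approach breaks. You invoke that $\Sp(-)$ is lax symmetric monoidal as an endofunctor of pointed $\infty$-categories with finite limits and left-exact functors, citing the smashing-localization description from \cite[\S 4.8]{HA}. But that description lives in $\PR^{L}$: it says $\Sp(-)\simeq -\otimes\Sp$ on \emph{presentable} $\infty$-categories and \emph{colimit-preserving} functors, and it does not apply to $j\colon \bC\to\mathcal{P}(\bC)$, whose source is small and which does not preserve colimits. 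To run your argument in the left-exact setting you would need not only a symmetric monoidal structure on the category of left-exact functors for which $\Sp(-)$ is lax monoidal, but also that $(\mathcal{P}(\bC),\mathrm{Day})$ is a commutative algebra there, i.e.\ that the Day convolution is left exact in each variable. It is not in general: $(F\otimes G)(c)$ is a colimit over a non-filtered comma category, which does not commute with finite limits in $\Spc$. So $j$ is not a map of algebras in any category on which your $\Sp(-)$ acts, the ``transported'' monoidal structure on $\Fun(\bC^{op},\Sp)$ is not available, and the subsequent comparison with the Day convolution (via the counit $\Sigma^{\infty}_{+}\Omega^{\infty}\Rightarrow\id$) never gets off the ground.

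The paper avoids this entirely by factoring the stable Yoneda embedding as $\bC\to\Ind(\bC)\to\Fun(\bC^{op},\Sp)$ and staying inside $\PR^{L}$, where the monoidal theory of stabilization genuinely applies. The first functor is symmetric monoidal. For the second, the universal properties of $(\mathcal{P}(\bC),\mathrm{Day})$ and of its stabilization $\Sigma^{\infty}_{+}\colon\mathcal{P}(\bC)\to\Fun(\bC^{op},\Sp)$ produce a colimit-preserving symmetric monoidal functor $\Fun(\bC^{op},\Sp)\to\Ind(\bC)$, and its right adjoint is exactly the extension $\Ind(\bC)\to\Fun(\bC^{op},\Sp)$ of the stable Yoneda embedding; by \cite[Corollary 7.3.2.7]{HA} this right adjoint is canonically lax symmetric monoidal. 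If you want to salvage your idea, the fix is essentially to interpose the stable presentable category $\Ind(\bC)$ before taking adjoints --- which is the paper's proof.
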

\begin{proof}
We assume that $\bC$ is a small, stable $\infty$-category. Since the Yoneda embedding $\bC \to \Fun(\bC^{op}, \Spc)$ admits a symmetric monoidal refinement we conclude that the resulting tensor product agrees with the one 
constructed by Lurie in \cite[Cor.~4.8.1.12]{HA}. Thus is also inherits the universal property of \cite[Prop.~4.8.1.10]{HA}, namely that colimit preserving symmetric monoidal functors $\Fun(\bC^{op}, \Spc) \to \bD$ are essentially the same thing as
symmetric monoidal functors $\bC \to \bD$ where $\bD$ is presentably symmetric monoidal. In particular we get an induced symmetric monoidal functor 
$$\Fun(\bC^{op}, \Spc) \to \Ind(\bC)$$
induced from the inclusion $\bC \to \Ind(\bC)$. The right adjoint of that functor is the canonical inclusion $\Ind(\bC) \to \Fun(\bC^{op}, \Spc)$ which extends the Yoneda embedding. Invoking \cite[Cor.~7.3.2.7]{HA} 
this right adjoint 
inherits a lax symmetric monoidal structure.  
The symmetric monoidal functor 
$$
\Sigma^\infty_+: \Fun(\bC^{op},\Spc) \to \Fun(\bC^{op},\Sp)
$$
exhibits the $\infty$-category $\Fun(\bC^{op},\Sp)$ as the stabilization of $\Fun(\bC^{op},\Spc)$ (as a symmetric monoidal $\infty$-category). Since $\Ind(\bC)$ is stable (see Appendix \ref{lkjdqlkwdjlwqdwqdqdq}), we get an induced left adjoint, symmetric monoidal functor 
$
\Fun(\bC^{op}, \Sp) \to \Ind(\bC)
$
that makes the diagram 
$$
\xymatrix{
\Fun(\bC^{op}, \Spc) \ar[d]^{\Sigma^\infty_+} \ar[r] & \Ind(\bC) \\
\Fun(\bC^{op}, \Sp) \ar[ru] & 
}
$$
commutative. But then the right adjoint $\Ind(\bC) \to \Fun(\bC^{op}, \Sp)$ inherits a canonical lax symmetric monoidal structure. Looking at the right adjoints in the commutative diagram above we see 
that the right adjoint functor of $\Fun(\bC^{op}, \Sp) \to \Ind(\bC)$ induces the stable Yoneda embedding. 

If finally $\bC$ is not small then we pass to a higher universe in which it becomes small.
\end{proof}

\begin{corollary} \phantomsection \label{cor:map-calg}
\begin{itemize}
 \item 
For any $c \in \CAlg(\bC)$, the functor $\map(-,c)$ has a refinement to an object of $\CAlg(\Fun(\bC^{op},\Sp)) \simeq \Fun^{lax}(\bC^{op}, \Sp)$. In particular, it sends a coalgebra $b\in \CAlg(\bC^{op})$ to an algebra
\[
\map(b,c) \in \CAlg(\Sp).
\]
\item
For any coalgebra  $b \in \CAlg(\bC^{op})$ the functor $\map(b,-): \bC \to \Sp$ has a refinement to a lax symmetric monoidal functor.
\end{itemize}
\end{corollary}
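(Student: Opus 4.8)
The plan is to deduce both assertions formally from the preceding Proposition --- that the stable Yoneda embedding $y\colon\bC\to\Fun(\bC^{op},\Sp)$ is lax symmetric monoidal --- together with Glasman's identification in Proposition~\ref{prop_glas}(2).

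For the first bullet, I would use that a lax symmetric monoidal functor carries commutative algebra objects to commutative algebra objects. Applied to $y$ and to a given $c\in\CAlg(\bC)$, this shows that $y(c)=\map(-,c)$ is canonically a commutative algebra object of $\Fun(\bC^{op},\Sp)$ for the Day convolution. Invoking Proposition~\ref{prop_glas}(2) with source $\bC^{op}$ and target the presentably symmetric monoidal $\infty$-category $\Sp$, we have $\CAlg(\Fun(\bC^{op},\Sp))\simeq\Fun^{lax}(\bC^{op},\Sp)$, so this exhibits $\map(-,c)$ as a lax symmetric monoidal functor $\bC^{op}\to\Sp$. The ``in particular'' clause is then immediate: such a functor induces a functor $\CAlg(\bC^{op})\to\CAlg(\Sp)$ on commutative algebra objects, and its value at a coalgebra $b\in\CAlg(\bC^{op})$ is the asserted commutative ring spectrum $\map(b,c)$.

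For the second bullet, the point is that the hypotheses of the preceding Proposition --- $\bC$ stable, and the tensor bifunctor preserving finite colimits (equivalently, finite limits) separately in each variable --- are self-dual, in the sense that they also hold for $\bC^{op}$, so that $\bC^{op}$ is again stably symmetric monoidal and the Proposition applies to it. Hence the stable Yoneda embedding $\bC^{op}\to\Fun(\bC,\Sp)$ is lax symmetric monoidal, and by the argument of the first bullet it sends the coalgebra $b\in\CAlg(\bC^{op})$ to a lax symmetric monoidal functor $\bC\to\Sp$; unwinding, this functor is $a\mapsto\map_{\bC^{op}}(a,b)\simeq\map(b,a)$, since mapping spectra in $\bC^{op}$ are by construction those of $\bC$ with the two arguments interchanged. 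The same conclusion can be phrased by writing $\map(b,-)$ as the composite $\mathrm{ev}_b\circ y$ of $y$ with evaluation at the object $b\in\bC^{op}$, and observing, e.g.\ from the Day convolution formula in Proposition~\ref{prop_glas}(1), that evaluation at a commutative algebra object of the monoidal source is lax symmetric monoidal.

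Since the corollary is essentially a repackaging of the preceding Proposition, I do not expect any real obstacle; the one thing that needs care is the bookkeeping of opposite categories and the variance of the Day convolution --- in particular, checking that a commutative algebra object $b\in\CAlg(\bC^{op})$, a ``coalgebra'' from the viewpoint of $\bC$, is exactly the structure that makes corepresentability by $b$ (equivalently, evaluation at $b$) lax symmetric monoidal.
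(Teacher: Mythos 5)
Your deduction is correct and is exactly the intended one: the paper states this as an immediate corollary (with no written proof) of the preceding Proposition on the lax symmetric monoidal stable Yoneda embedding together with Proposition~\ref{prop_glas}(2), and your handling of the second bullet — noting that the hypotheses are self-dual so the Proposition applies to $\bC^{op}$, or equivalently that evaluation at a commutative algebra object of the Day-convolution source is lax symmetric monoidal — is the right bookkeeping. No gaps.
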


\begin{proposition}\label{proplaxad}
Consider an adjunction 
$$ L: \bC \leftrightarrows \bD : R $$
between stably symmetric monoidal $\infty$-categories with $L$ symmetric monoidal (and $R$ lax symmetric monoidal accordingly). Then  we have an equivalence of lax symmetric monoidal functors
$$
\map(L(-), c) \simeq \map(-, Rc) : \bC^{op} \to \Sp
$$
for every commutative algebra $c \in \CAlg(\bD)$.
\end{proposition}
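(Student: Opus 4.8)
The plan is to upgrade the natural equivalence $\map(L(-),c)\simeq\map(-,Rc)$ --- which already exists at the level of underlying functors $\bC^{op}\to\Sp$, being the adjunction $L\dashv R$ read off on mapping spectra (for any adjunction of stable $\infty$-categories this is the ordinary adjunction isomorphism on all homotopy groups) --- to an equivalence of commutative algebra objects of $\Fun(\bC^{op},\Sp)$ equipped with the Day convolution symmetric monoidal structure. By Proposition~\ref{prop_glas}(b) such a datum is the same thing as an equivalence of lax symmetric monoidal functors, so producing it suffices.

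First I would exhibit both sides as commutative algebra objects of $\Fun(\bC^{op},\Sp)$. Since $R$ is lax symmetric monoidal, $Rc$ lies in $\CAlg(\bC)$, hence $\map(-,Rc)\in\CAlg(\Fun(\bC^{op},\Sp))$ by Corollary~\ref{cor:map-calg}. For the other side, $c\in\CAlg(\bD)$ gives $\map(-,c)\in\CAlg(\Fun(\bD^{op},\Sp))$ by the same corollary; moreover $L$ symmetric monoidal implies that $L^{op}\colon\bC^{op}\to\bD^{op}$ is symmetric monoidal, and precomposition along a symmetric monoidal functor carries the Day convolution structure to the Day convolution structure (lax monoidally, which is all that is needed), so $(L^{op})^{*}$ sends commutative algebras to commutative algebras; its value on $\map(-,c)$ is exactly $\map(L(-),c)$, which is therefore a commutative algebra object.

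It then remains to identify these two commutative algebra structures on the common underlying functor. The cleanest route I see is to package the comparison for all $c$ simultaneously as an equivalence of the two lax symmetric monoidal functors $\bD\to\Fun(\bC^{op},\Sp)$ obtained as $(L^{op})^{*}\circ y_{\bD}$ and $y_{\bC}\circ R$, where $y_{\bC},y_{\bD}$ denote the stable Yoneda embeddings (lax symmetric monoidal by the proposition preceding Corollary~\ref{cor:map-calg}); applying $\CAlg(-)$ and evaluating at $c\in\CAlg(\bD)$ then yields the claim. These two functors agree on underlying functors by the adjunction, and I would establish the compatibility with the lax monoidal structures by writing the adjunction equivalence as the composite of ``apply $R$ to mapping spectra'' and ``precompose with the unit $\eta\colon\id_{\bC}\to RL$'', each of which is a lax symmetric monoidal transformation: the first because $R$, as a map of $\infty$-operads, acts compatibly on mapping objects and on the monoidal structure maps, and the second because $L$ symmetric monoidal forces $\eta$ to be lax monoidal via the triangle identities.

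The main obstacle is precisely this last point: promoting the $1$-categorical naturality of the adjunction isomorphism $\map(Lb,c)\simeq\map(b,Rc)$ to a coherent identification in the $\infty$-categorical world of Day convolution, i.e.\ producing the homotopies and higher homotopies witnessing that $\eta$ and the enriched functoriality of $R$ are lax symmetric monoidal. Everything else is formal --- that $L^{op}$ is symmetric monoidal, that precomposition is monoidal for Day convolution \cite{Glas}, and that $\map(-,c)$ and $\map(-,Rc)$ are algebras --- and follows from Corollary~\ref{cor:map-calg} and Proposition~\ref{prop_glas}. An alternative organisation that sidesteps the explicit manipulation of $\eta$ is to prove the spectrum-valued analogue of Corollary~\ref{cor_yoneda}, namely that commutative algebra refinements of a representable functor $\map(-,x)\in\Fun(\bC^{op},\Sp)$ correspond to those of $x\in\bC$ (proved as in Corollary~\ref{cor_yoneda} using that the stable Yoneda embedding of Appendix~\ref{qkdhkqwdlwdwqddqwdqd} is fully faithful and lax symmetric monoidal), so that both algebra structures on $\map(-,Rc)$ descend to $Rc\in\bC$, where their coincidence follows because both are produced naturally in $c\in\CAlg(\bD)$ out of the single symmetric monoidal adjunction $L\dashv R$.
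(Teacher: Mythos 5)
Your opening reduction is exactly the one the paper makes: both sides are the values at $c$ of the two composite lax symmetric monoidal functors $(L^{op})^{*}\circ y_{\bD}$ and $y_{\bC}\circ R$ from $\bD$ to $\Fun(\bC^{op},\Sp)$, so it suffices to identify these as lax symmetric monoidal functors, and the observations that $(L^{op})^{*}$ preserves Day-convolution algebras and that both sides are algebras via Corollary~\ref{cor:map-calg} are correct. But the step you yourself flag as ``the main obstacle'' is a genuine gap, and neither of your routes closes it. Decomposing the adjunction equivalence into the unit $\eta$ and ``the enriched functoriality of $R$ on mapping spectra'', and then asserting that each piece is a lax symmetric monoidal transformation, is the coherence problem restated rather than solved: in the $\infty$-categorical setting there is no triangle-identity computation available by hand, and the coherent action of $R$ on mapping spectra compatibly with the operad structure is not a datum you possess independently of what you are trying to construct. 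Your alternative route has the same defect at its end: after descending both algebra structures to $Rc$, the claim that they coincide ``because both are produced naturally out of the single adjunction'' is circular --- producing that natural identification of algebra structures is precisely the content of the proposition.

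The paper's way out, and the idea missing from your proposal, is to never manipulate the right adjoints directly. It factors the square $(L^{op})^{*}\circ y_{\bD}\simeq y_{\bC}\circ R$ through the Ind-completions and checks commutativity of the square of \emph{left} adjoints instead: the left Kan extensions $L_{!}$ on $\Fun(\bC^{op},\Spc)$ and $\Fun(\bC^{op},\Sp)$ and the functor induced on $\Ind(\bC)\to\Ind(\bD)$. That square consists of symmetric monoidal, colimit-preserving functors out of the presheaf category and its stabilization, so its commutativity as a square of symmetric monoidal functors follows from the universal properties of $\mathcal{P}(\bC)$, of stabilization, and of $\Ind$ --- no coherences need to be produced by hand. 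Passing back to right adjoints via \cite[Corollary 7.3.2.7]{HA} then yields the commuting square of lax symmetric monoidal functors, which evaluated at $c\in\CAlg(\bD)$ gives the statement. If you want to complete your argument, replace the manipulation of $\eta$ by this adjoint-square manoeuvre.
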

\begin{proof}
From the way the lax symmetric monoidal structures are constructed we see that it is enough to prove that the following square
$$
\xymatrix{ 
\bD \ar[r]\ar[d]^R & \Fun(\bD^{op},\Sp) \ar[d]^{L^*} \\
\bC \ar[r] & \Fun(\bC^{op},\Sp) 
}
$$
commutes as a square of lax symmetric monoidal functors. We factor this square as 
$$
\xymatrix{ 
\bD \ar[r]\ar[d]^R &\Ind(\bD) \ar[r] \ar[d]^{\Ind(R)} & \Fun(\bD^{op},\Sp) \ar[d]^{L^*} \\
\bC \ar[r]&\Ind(\bC) \ar[r]  & \Fun(\bC^{op},\Sp) 
}
$$
and prove that the right hand square commutes (since the left hand does by construction). For this it is enough to consider the adjoint diagram which  sits on the right in a diagram
$$
\xymatrix{ 
\bC \ar[r]\ar[d]^{L} & \Fun(\bC^{op},\Spc) \ar[r]\ar[d]^{L_{!}} &\Fun(\bC^{op},\Sp) \ar[r] \ar[d]^{L_!} & \Ind(\bC) \ar[d]^{L} \\
\bD \ar[r] & \Fun(\bD^{op},\Spc) \ar[r] &\Fun(\bD^{op},\Sp) \ar[r]  & \Ind(\bD)
}
$$
where the left two squares commute (as symmetric monoidal functors). 
By the universal property of the stabilization and of the presheaf category, the right hand square commutes if the outer square commutes. But this is true by construction.
\end{proof}

\section{Algebra Localizations}\label{localization}

 Let $R$ be a commutative algebra in a symmetric monoidal $\infty$-category $\bC$, and let $\beta: I \to R$ be a morphism in $\bC$ where we assume that $I$ is a tensor invertible object of $\bC$. 
One example to have in mind is that $R$ is a ring spectrum and $\beta$ is an element in
a homotopy group of $R$. We want to describe the localization $R[\beta^{-1}]$ and explain why it is again a commutative algebra object. 
A priori, one has to be
careful about the distinction between algebra 
and module localization, though in the end they turn out to be the same. For what follows we assume that $\bC$ admits all colimits and that the tensor product preserves colimits in both variables separately. 

Let us first explain the localization as a module. We denote by $\Mod(R)$ the $\infty$-category of left $R$-modules (which turns out to be equivalent to the category of right $R$-modules \cite[4.5.1]{HA}). 
Let $M$ be a left $R$-module. The map $\beta$ induces the map
$$
\nu_\beta: I \otimes M \xrightarrow{\beta \cdot -} M 
$$
where the tensor product is taken in $\bC$ and not over $R$. We say that $\beta$ acts invertibly on $M$ if this morphism is an equivalence. 
Since $I$ is tensor invertible and $\bC$ is symmetric monoidal, we can as well view $\nu_\beta$ as a morphism $\mu_\beta: M \to I^{-1}\otimes M$. Then $\nu_\beta$ is an equivalence precisely if $\mu_\beta$ is an equivalence.

In order to construct the localization we will need the following well known cyclic invariance criterion which is discussed for example in \cite{Robalo} in the case that $\bC$ is the category of symmetric monoidal $\infty$-categories.\footnote{We would like to thank Markus Spitzweck and David Gepner for helpful discussions concerning the cyclic invariance condition.}
The morphism $\beta: I \to R$ leads to a new morphism $\beta^3: I^3 \to R$ which carries an action of the 
permutation group $\Sigma_3$. More precisely we have a map
$B \Sigma_3 \to \Map(I^3, R)$ sending the basepoint to $\beta^3$. 
\begin{definition}\label{def:cyclic-invariant}
The element $\beta$ is said to be cyclically invariant if the endomorphism induced by the cyclic permutation $\sigma = (123) \in \Sigma_3$ on $\beta^3 \in \Map(I^3,R)$ is homotopic to the identity or, equivalently,
 if $\sigma$ is in the 
kernel of the morphism $\Sigma_3 = \pi_1(B\Sigma_3) \to \pi_1(\Map(I^3,R), \beta^3)$.
\end{definition}

\begin{lemma}
If the ambient category $\bC$ is additive then every morphism $\beta: I \to R$ is cyclically invariant.  If $\bC$ is not necessarily additive but $\beta$ acts invertibly on $R$ then $\beta$ is cyclically invariant.
\end{lemma}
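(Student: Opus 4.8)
The plan is to study the group homomorphism $\Sigma_{3}=\pi_{1}(B\Sigma_{3})\to\pi_{1}(\Map(I^{\otimes 3},R),\beta^{3})$ induced by the map $B\Sigma_{3}\to\Map(I^{\otimes 3},R)$ of the statement and to show that the $3$-cycle $\sigma=(123)$ lies in its kernel. The key point is purely group-theoretic: $(123)$ lies in the commutator subgroup $[\Sigma_{3},\Sigma_{3}]=A_{3}$, so it is killed by any homomorphism from $\Sigma_{3}$ into an abelian group. Hence in both cases it suffices to prove that $\pi_{1}(\Map(I^{\otimes 3},R),\beta^{3})$ is abelian; then the displayed homomorphism factors through $\Sigma_{3}^{\mathrm{ab}}\cong\Z/2$, on which $\sigma$ becomes trivial.

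In the additive case this is immediate: in an additive $\infty$-category every mapping space is canonically a grouplike commutative monoid in spaces, i.e.~an infinite loop space, so its fundamental group at any basepoint --- in particular at $\beta^{3}$ --- is abelian.

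For the second case I would pass to $R$-modules. Since $\beta$ acts invertibly on $R$, so does $\beta^{\otimes 3}$; that is, the $R$-linear map $\nu_{\beta^{\otimes 3}}\colon I^{\otimes 3}\otimes R\to R$ adjoint to $\beta^{3}$ is an equivalence, being a composite of maps of the form $\id\otimes\nu_{\beta}$. The free--forgetful adjunction between $\bC$ and $\Mod(R)$ has a symmetric monoidal left adjoint $(-)\otimes R$, hence induces an equivalence $\Map_{\bC}(I^{\otimes 3},R)\simeq\Map_{\Mod(R)}(I^{\otimes 3}\otimes R,R)$ which is $\Sigma_{3}$-equivariant for the actions by precomposition with the permutations of the tensor factors, and which carries the map $B\Sigma_{3}\to\Map_{\bC}(I^{\otimes 3},R)$ at $\beta^{3}$ to the corresponding map $B\Sigma_{3}\to\Map_{\Mod(R)}(I^{\otimes 3}\otimes R,R)$ at $\nu_{\beta^{\otimes 3}}$. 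Since $\nu_{\beta^{\otimes 3}}$ is an equivalence, postcomposition with its inverse --- which commutes with precomposition and is therefore $\Sigma_{3}$-equivariant --- identifies a neighbourhood of $\beta^{3}$ with a neighbourhood of $\id$ in $\Map_{\Mod(R)}(I^{\otimes 3}\otimes R,I^{\otimes 3}\otimes R)$, so that $\pi_{1}(\Map_{\bC}(I^{\otimes 3},R),\beta^{3})\cong\pi_{1}(\Aut_{\Mod(R)}(I^{\otimes 3}\otimes R),\id)$. As $I^{\otimes 3}$ is $\otimes$-invertible in $\bC$, the object $I^{\otimes 3}\otimes R$ is $\otimes$-invertible in $\Mod(R)$, whence $\Aut_{\Mod(R)}(I^{\otimes 3}\otimes R)\simeq\Aut_{\Mod(R)}(R)=\mathrm{GL}_{1}(R)$ as grouplike $E_{\infty}$-spaces; its $\pi_{1}$ is abelian, and the claim follows as before.

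I expect the only genuine difficulty to be the equivariance bookkeeping in the second case: one must check that the adjunction equivalence really intertwines the two $\Sigma_{3}$-actions together with the maps out of $B\Sigma_{3}$ witnessing that $\beta^{3}$, respectively $\nu_{\beta^{\otimes 3}}$, is a homotopy fixed point --- this is the step where commutativity of $R$ enters --- and that the identification of the automorphism space of an invertible $R$-module with $\mathrm{GL}_{1}(R)$ respects enough of the $E_{\infty}$-structure to conclude abelianness of $\pi_{1}$. The reduction via the commutator subgroup and the additive case are formal.
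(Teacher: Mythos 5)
Your proof is correct, and its skeleton coincides with the paper's: both reduce the claim to abelianness of $\pi_{1}(\Map(I^{\otimes 3},R),\beta^{3})$, since $(123)$ lies in the commutator subgroup of $\Sigma_{3}$, and both handle the additive case by observing that mapping spaces are then infinite loop spaces. For the second case, however, you take a genuinely different route. The paper stays inside $\bC$: it shows that $\pi_{1}(\Map(I,R),\beta)$ is abelian for any $\beta$ acting invertibly (and applies this to $\beta^{3}$), by tensoring with $I^{-1}$ to reduce to $\beta'\colon 1\to R\otimes I^{-1}$ and invoking an induced $E_\infty$-structure on $\Map(1,R\otimes I^{-1})$ in which $\beta'$ is invertible, so that translation moves the basepoint to the unit. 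You instead pass to $\Mod(R)$, identify $\beta^{3}$ with the equivalence $\nu_{\beta^{\otimes 3}}$ via the free--forgetful adjunction, and compose with its inverse to move the basepoint to the identity of $\mathrm{End}_{\Mod(R)}(I^{\otimes 3}\otimes R)$, whose $\pi_{1}$ is abelian by Eckmann--Hilton. Your version is arguably more elementary, since it avoids the somewhat implicit ``graded units'' $E_\infty$-structure the paper appeals to. Two simplifying remarks: first, the equivariance bookkeeping you single out as the main difficulty is not actually needed --- cyclic invariance asks only whether $\sigma$ dies under the group homomorphism $\Sigma_{3}\to\pi_{1}(\Map(I^{\otimes 3},R),\beta^{3})$, so any abstract isomorphism of the target with an abelian group suffices, and no $\Sigma_{3}$-equivariant identification is required; second, you do not need invertibility of $I^{\otimes 3}\otimes R$ or $\mathrm{GL}_{1}(R)$ at all, since $\pi_{1}(\mathrm{End}(M),\id)$ is abelian for \emph{every} object $M$ of any $\infty$-category, composition making $\mathrm{End}(M)$ a unital $H$-space based at $\id$.
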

\begin{proof}
If $\bC$ is additive then $\Map(I^3,R)$ admits a grouplike $E_\infty$-structure, in particular the group $\pi_1(\Map(I^3,R), \beta^{3})$ is abelian. This forces the morphism
$\Sigma_3 \to \pi_1(\Map(I^3,R),\beta^3)$ to factor through the abelianization $(\Sigma_3)^{ab} \cong \Z/2$ which implies that $\sigma$ maps to zero. 

For the second case we claim that under the assumption that $\beta$ acts invertibly on $R$ it always follows that $\pi_1(\Map(I^3,R),¸\beta^3)$ is abelian and the same argument shows that $\sigma$ is cyclically invariant. 
We will in fact show that $\pi_1(\Map(I,R),\beta)$ is abelian for every  $\beta$ acting invertibly on $R$ and then apply this to the  element $\beta^3: I^3 \to R$. 
Tensoring $\beta$ with $I^{-1}$ we get a morphism $\beta': 1 \to R \otimes I^{-1}$ and the induced morphism $\pi_1(\Map(I,R),\beta) \to \pi_1(\Map(1,R \otimes I^{-1}, \beta')$ is an isomorphism. Thus we can reduce to 
the case of $\beta'$. In this case however the space $\Map(1, R \otimes I^{-1})$ gets an induced $E_\infty$-structure for which $\beta'$ admits an inverse. Thus it follows that 
$\pi_1(\Map(1, R \otimes I^{-1}),\beta')$ is abelian.
\end{proof}

Note that since $R$ is commutative, $\mu_\beta$ is a morphism of left $R$-modules (where the module structure on the target is 
defined using the fact that $\bC$ is symmetric monoidal). 
Assume $\beta$ is cyclically invariant. We define the localization of $M$ as 
$$
M[\beta^{-1}] := \colim\big( M \xrightarrow{\mu_\beta} I^{-1} \otimes M \xrightarrow{I^{-1} \otimes \mu_\beta} I^{-2} \otimes M  \to ... \big)
$$
where the colimit is taken in the category of (left) $R$-modules. This assignment $M \mapsto M[\beta^{-1}]$ refines to an endofunctor $L: \Mod(R) \to \Mod(R)$ with a natural transformation $\id \to L$. The following statement
is essentially due to Voevodsky and Robalo:
\begin{proposition}
$L$ is a localization and  the local objects are precisely the modules on 
which $\beta$ acts invertibly.
\end{proposition}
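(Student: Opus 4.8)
The plan is to exhibit $L$ as the reflective localization of $\Mod(R)$ onto the full subcategory $\Mod(R)^{\beta}\subseteq\Mod(R)$ spanned by those modules on which $\beta$ acts invertibly (equivalently, on which $\mu_\beta$ is an equivalence). Granting this, both assertions follow at once, since the local objects of a reflective localization are by definition exactly the objects in the replete image of the fully faithful right adjoint, here $\Mod(R)^{\beta}$.

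First I would check that $L$ takes values in $\Mod(R)^{\beta}$ and restricts to the identity on it. Because $R$ is commutative, tensoring with the invertible object $I^{-1}$ is an $R$-linear endofunctor $I^{-1}\otimes(-)\colon\Mod(R)\to\Mod(R)$ with inverse $I\otimes(-)$; being an equivalence, it preserves sequential colimits. Hence $I^{-1}\otimes LM\simeq\colim_n I^{-(n+1)}\otimes M$, which is the colimit of the telescope with its first term removed, and by inspection $\mu_\beta\colon LM\to I^{-1}\otimes LM$ is the induced comparison map; it is an equivalence by cofinality of the shifted telescope. Thus $\beta$ acts invertibly on $LM$. Conversely, if $\beta$ acts invertibly on $M$ then each transition map $I^{-n}\otimes M\to I^{-(n+1)}\otimes M$ equals $I^{-n}\otimes\mu_\beta$ and is therefore an equivalence, so $\eta_M\colon M\to LM$ is an equivalence; in particular $L$ restricts to the identity on $\Mod(R)^{\beta}$.

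Next I would show that for every $N\in\Mod(R)^{\beta}$ the unit $\eta_M$ induces an equivalence on mapping spaces into $N$. Mapping out of the defining colimit and using the equivalence $I^{-n}\otimes(-)$ to transpose the twists gives
\[
\Map_{\Mod(R)}(LM,N)\ \simeq\ \lim_n\Map_{\Mod(R)}(I^{-n}\otimes M,N)\ \simeq\ \lim_n\Map_{\Mod(R)}(M,I^{n}\otimes N),
\]
whose transition maps are postcomposition with the twisted $\beta$-actions $I^{n}\otimes\nu_\beta\colon I^{n+1}\otimes N\to I^{n}\otimes N$; these are equivalences since $\beta$ acts invertibly on $N$. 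Hence the projection of the limit onto its $n=0$ term is an equivalence, and this projection is precisely restriction along $\eta_M$. So $\eta_M$ exhibits $LM$ as a $\Mod(R)^{\beta}$-localization of $M$; together with $LM\in\Mod(R)^{\beta}$ from the previous step, this shows that the inclusion $\Mod(R)^{\beta}\hookrightarrow\Mod(R)$ admits a left adjoint whose composite with the inclusion is $L$ and whose unit is $\eta$. Therefore $L$ is a localization in the sense of \cite[Section~5.2.7]{HTT}, and its local objects are exactly the objects of $\Mod(R)^{\beta}$, i.e.~the $R$-modules on which $\beta$ acts invertibly. (Alternatively one may verify directly that $L\eta$ and $\eta L$ are equivalences — the latter via $LM\in\Mod(R)^{\beta}$ — and apply the same criterion; compare \cite{Robalo} for the analogous assertion.)

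The step I expect to be the main obstacle is the identification, in the third paragraph, of the transition maps of the tower $\{\Map_{\Mod(R)}(M,I^{n}\otimes N)\}_n$ with those induced by $\nu_\beta$ on $N$. This requires carefully tracking the definition of $\mu_\beta$, the induced $R$-module structures on all the twists $I^{-n}\otimes M$ and $I^{n}\otimes N$, and the adjunction equivalences $\Map_{\Mod(R)}(I^{-n}\otimes M,N)\simeq\Map_{\Mod(R)}(M,I^{n}\otimes N)$ compatibly along the tower — exactly the coherence bookkeeping for which the cyclic invariance of $\beta$, used upstream (following \cite{Robalo}) to produce $L$ and the natural transformation $\eta$ in the first place, is in force.
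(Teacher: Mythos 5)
Your overall strategy is sound and close to the paper's, which establishes the same two facts --- $\beta$ acts invertibly on $LM$, and $\eta_M$ is an equivalence when $M$ is already local --- and then cites \cite[Proposition 5.2.7.4]{HTT}; your third paragraph replaces that citation by a direct mapping-space verification of the adjunction, which is fine. But there is a genuine gap in your second paragraph, and it sits exactly on the one nontrivial point of the proposition. You assert that ``by inspection'' the map $\mu_\beta\colon LM\to I^{-1}\otimes LM$ agrees with the comparison map to the shifted telescope, and conclude by cofinality. This is not an inspection: the transition maps of the telescope are $I^{-n}\otimes\mu_\beta$, where the new twist is introduced on the \emph{inside}, whereas $\mu_\beta$ evaluated on the object $I^{-n}\otimes M$ tensors a new copy of $I^{-1}$ on the \emph{outside}. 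Matching the two requires commuting copies of $I$ past each other via the symmetry constraint, and iterating this along the telescope produces precisely the cyclic permutation acting on $\beta^{\otimes 3}$. The hypothesis that $\beta$ is cyclically invariant is what makes this obstruction vanish; without it the claim that $\beta$ acts invertibly on $M[\beta^{-1}]$ is false in general --- this is the entire reason the hypothesis appears, and it is the content of \cite[Proposition 4.21]{Robalo}, which the paper invokes at exactly this step.

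Relatedly, your closing paragraph mislocates where cyclic invariance enters: it is not needed ``to produce $L$ and the natural transformation $\eta$'' (the telescope colimit and the maps $\eta_M$ exist for any $\beta$); it is needed to prove that $\beta$ acts invertibly on $LM$, i.e., for the very step you treated as immediate. The coherence bookkeeping you flag in the third paragraph --- identifying the transition maps of the tower $\Map_{\Mod(R)}(M,I^{n}\otimes N)$ with twisted $\beta$-actions on $N$ --- is comparatively harmless, since there the twists are applied to a module $N$ on which $\beta$ is already assumed to act invertibly. Once you repair the second paragraph by invoking cyclic invariance (or restrict to the additive case, where the lemma preceding the proposition makes it automatic), the remainder of your argument goes through.
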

\begin{proof}
We first claim that $\beta$ acts invertibly on the module $M[\beta^{-1}]$. This can be seen unfolding the definitions and using the cyclic invariance condition as in the proof of  Proposition 4.21 in \cite{Robalo}. 
The second observation is that if $M$ has the property that $\beta$ acts invertibly, then the canonical morphism $M \to M[\beta^{-1}]$ is an equivalence (since then all the morphisms in the defining colimit are). Invoking 
\cite[Prop.~5.2.7.4]{HTT} these two statements together already imply the claim.
\end{proof}
\begin{corollary}
The $R$-module morphism $M\to M[\beta^{-1}]$ is universal among $R$-module morphisms $M\to N$ such that $\beta$ acts invertibly on $N$.
\end{corollary}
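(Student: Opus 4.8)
The plan is to read this statement off from the preceding Proposition. That result identifies the functor $M\mapsto M[\beta^{-1}]$ with a localization functor $L\colon\Mod(R)\to\Mod(R)$, equipped with the unit transformation $\eta\colon\id\to L$ provided by the canonical maps $M\to M[\beta^{-1}]$, and with the property that its full subcategory of local objects is exactly the full subcategory $\mathcal{L}\subseteq\Mod(R)$ spanned by those $R$-modules on which $\beta$ acts invertibly. First I would invoke the structure theory of localization functors \cite[Proposition 5.2.7.4]{HTT}: such an $L$ factors as $\Mod(R)\xrightarrow{L'}\mathcal{L}\hookrightarrow\Mod(R)$ with $L'$ left adjoint to the inclusion. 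Consequently, for every $M\in\Mod(R)$ and every $N\in\mathcal{L}$, precomposition with $\eta_M$ yields an equivalence
\[
\Map_{\Mod(R)}(M[\beta^{-1}],N)\xrightarrow{\ \sim\ }\Map_{\Mod(R)}(M,N).
\]

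Next I would spell out that this mapping-space equivalence is precisely the asserted universal property. Saying that $M\to M[\beta^{-1}]$ is universal among $R$-module maps $M\to N$ with $\beta$ acting invertibly on $N$ means that $\eta_M$ is an initial object of the full subcategory $\mathcal{E}\subseteq\Mod(R)_{M/}$ consisting of the maps $M\to N$ with $N\in\mathcal{L}$. For any object $(f\colon M\to N)$ of $\mathcal{E}$, the space of morphisms from $\eta_M$ to $f$ in $\Mod(R)_{M/}$ is the homotopy fibre over $f$ of the map $\Map_{\Mod(R)}(M[\beta^{-1}],N)\to\Map_{\Mod(R)}(M,N)$, which is contractible by the displayed equivalence; this contractibility for all $f$ is exactly the initiality of $\eta_M$.

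The only substantive input is the identification of $\mathcal{L}$ with the subcategory of local objects, which is the content of the Proposition proved just above, so I do not expect any real obstacle; the corollary is essentially a reformulation. Should one prefer to avoid the abstract formalism, the same conclusion follows by hand: given $(f\colon M\to N)\in\mathcal{E}$, apply $(-)[\beta^{-1}]$ to $f$ and compose with the inverse of the equivalence $N\xrightarrow{\sim}N[\beta^{-1}]$ — an equivalence because $\beta$ acts invertibly on $N$, so all transition maps in the defining colimit are equivalences — to obtain a factorization $M[\beta^{-1}]\to N$ of $f$ through $\eta_M$; essential uniqueness of the factorization then follows from idempotency of $L$, i.e.\ from the fact that $\eta_{LM}$ and $L(\eta_M)$ are equivalences that agree up to coherent homotopy.
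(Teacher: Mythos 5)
Your argument is correct and is exactly the reasoning the paper leaves implicit: the corollary is stated there without proof, as an immediate consequence of the preceding proposition via the standard fact that a localization functor restricts to a left adjoint of the inclusion of local objects, so that the unit $M\to M[\beta^{-1}]$ induces the mapping-space equivalence and is therefore initial among maps to modules on which $\beta$ acts invertibly. Nothing further is needed.
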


We now observe that the localization functor $L: \Mod(R) \to \Mod(R)$ can also be written as a tensor product of $R$-modules, namely
$$
M[\beta^{-1}] \simeq {R[\beta^{-1}] \otimes_{R} M.}
$$
This shows that $L$ is a smashing localization as discussed in \cite{Gepner:2013aa}. 
Using \cite[Lemma 3.6]{Gepner:2013aa}, we immediately get the following result.

\begin{proposition}
The module $R[\beta^{-1}]$ admits the unique structure of a commutative $R$-algebra such that the morphism $i: R \to R[\beta^{-1}]$ admits the structure of an $R$-algebra morphism. 
With this structure the morphism $i$ exhibits $R[\beta^{-1}]$
as the universal $R$-algebra on which $\beta$ acts invertibly.
\end{proposition}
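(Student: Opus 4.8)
The plan is to recognize $R[\beta^{-1}]$ as an idempotent object of the symmetric monoidal $\infty$-category $(\Mod(R),\otimes_R)$ and then feed this into the standard dictionary between idempotent objects and idempotent commutative algebras. First I would package the preceding discussion into the statement that the endofunctor $L\colon\Mod(R)\to\Mod(R)$, $M\mapsto M[\beta^{-1}]\simeq R[\beta^{-1}]\otimes_R M$, is a smashing localization whose local objects are precisely the $R$-modules on which $\beta$ acts invertibly. Since $\beta$ already acts invertibly on $M[\beta^{-1}]$ for every $M$ — observed in the proof that $L$ is a localization — applying $L$ a second time changes nothing; in particular the unit map $i\colon R\to R[\beta^{-1}]$ induces an equivalence $R[\beta^{-1}]\iso R[\beta^{-1}]\otimes_R R[\beta^{-1}]$, i.e.\ $R[\beta^{-1}]$ is an idempotent object of $\Mod(R)$. (Cyclic invariance of $\beta$, a standing assumption, is automatic here since $\Mod(R)$ is additive.)

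Next I would invoke the structure theory of idempotent objects, in the form of \cite[Lemma 3.6]{Gepner:2013aa} (which rests on the general theory of idempotent objects, cf.\ \cite{HA}): an idempotent object $E$ of a presentably symmetric monoidal $\infty$-category carries an essentially unique commutative algebra structure for which the canonical map $\mathbf 1\to E$ refines to a morphism of commutative algebras, and with respect to this structure $E$ is an idempotent algebra — meaning that for every commutative algebra $A$ the mapping space $\Map_{\CAlg}(E,A)$ is empty or contractible, being nonempty precisely when $A$ is $E$-local. Applied to $\bC=\Mod(R)$, whose unit is $R$ and whose commutative algebras are exactly the commutative $R$-algebras, this produces the asserted unique commutative $R$-algebra structure on $R[\beta^{-1}]$ making $i$ an $R$-algebra map, together with a universal property among commutative $R$-algebras.

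Finally I would match the abstract notion of being $E$-local with that of $\beta$ acting invertibly: a commutative $R$-algebra $A$ is $R[\beta^{-1}]$-local iff $A\to R[\beta^{-1}]\otimes_R A\simeq A[\beta^{-1}]$ is an equivalence, and unwinding the telescope defining $A[\beta^{-1}]$ this holds iff $\nu_\beta\colon I\otimes A\to A$ is an equivalence. Hence $i\colon R\to R[\beta^{-1}]$ is the universal commutative $R$-algebra on which $\beta$ acts invertibly, as claimed. The one genuinely subtle point — flagged earlier as the difference between module localization and algebra localization — is that $\beta$ acts invertibly on an $R$-algebra $A$ iff it does on the underlying $R$-module, and that an $R$-algebra map out of $R[\beta^{-1}]$ amounts to no more data than an $R$-module map compatible with the multiplications; both follow from the fact that the forgetful functor $\CAlg(\Mod(R))\to\Mod(R)$ is conservative and preserves the colimits in sight (filtered colimits, and tensoring with the invertible object $I$), which in turn yields that $\Mod(R[\beta^{-1}])$ is equivalent to the full subcategory of $L$-local $R$-modules, via \cite[Proposition 5.2.7.4]{HTT}. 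I expect no real obstacle beyond this bookkeeping; the substance of the argument is entirely carried by the idempotent-object formalism.
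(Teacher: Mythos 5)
Your argument is correct and takes essentially the same route as the paper: both identify $L$ as a smashing localization via $M[\beta^{-1}]\simeq R[\beta^{-1}]\otimes_R M$ and then invoke \cite[Lemma 3.6]{Gepner:2013aa} to obtain the unique commutative $R$-algebra structure and the universal property. The additional detail you supply on idempotent objects and on matching $E$-locality with invertibility of $\beta$ is precisely what that lemma packages, so nothing is missing.
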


Let us now outline the situations in which we need this construction.
\begin{example}
Let $\bC$ be the category of motivic spectra $\MotSp$ and $R$ a motivic commutative ring spectrum, i.e.~an element of $\CAlg(\MotSp)$. 
By construction, $\Sigma^{\infty}(\P^{1},\infty)$ is a tensor invertible object of $\MotSp$. Since $\MotSp$ is stable, it is also additive. Therefore, for any $\beta: \Sigma^\infty(\P^1,\infty)  \to R$
 we can form $R[\beta^{-1}]$ and it is the localization as described.
\end{example}
\begin{example}\label{ex:symmetric-monoidal-localization}
Let $\PR^{L}$ be the $\infty$-category of presentable $\infty$-categories and left adjoint functors. This $\infty$-category admits a symmetric monoidal structure in which the $\infty$-category of spaces 
$\Spc \simeq \sSet[W^{-1}]$ is the tensor unit. A commutative algebra object in $\PR^L$ is then exactly a presentably symmetric monoidal category $\bC$ \cite[Rem.~4.8.1.9.]{HA}.

By the universal property of the $\infty$-category of spaces -- it is freely generated under colimits by the one point space --, a
morphism $\Spc \to \bC$ in $\PR^L$ is essentially the same thing as an object of $\bC$. 
Thus every object $c \in \bC$ gives rise to such a morphism $\beta_c: \Spc \to \bC$. If $c$ is cyclically invariant (which for example follows if $c$ admits a cogroup structure), 
then  we can form the localization $\bC[\beta_c^{-1}]$ and it admits the structure of a presentably symmetric monoidal category.
This construction should not be confused with localizing a category at a set of morphisms.
\end{example}

\begin{remark}
We assumed that $\beta$ is cyclic invariant in $\bC$. This will be satisfied in all examples that we need. But for all our arguments we only need that $\beta$ acts `cyclically invariant' on the colimit $M[\beta^{-1}]$. 
In fact it is not hard to adapt our arguments from above to see that $\beta$ acts invertibly on $M[\beta^{-1}]$ precisely if it acts cyclically invariant. Thus all the results remain true if we only assume that $\beta$ acts cyclically invariant
on $R[\beta^{-1}]$. We do not know of an interesting example where this comes up. It is on the other hand not  hard to construct examples where $\beta$ does not act cyclically invariant and thus $R[\beta^{-1}]$ is not the 
localization.
\end{remark}

\bibliographystyle{amsalpha}
\bibliography{Hodge}

\end{document}